\newcommand\e\varepsilon
\newcommand\R{\mathbb R}
\newcommand\de\partial
\newcommand\weakto\rightharpoonup
\renewcommand\le\leqslant
\renewcommand\ge\geqslant
\renewcommand\a\alpha
\renewcommand\b\beta
\renewcommand\d\delta
\newcommand\vfi\varphi
\newcommand\g\gamma
\newcommand\gb\gamma
\renewcommand\l\lambda
\newcommand\n\nabla
\newcommand\s\sigma
\renewcommand\t\theta
\renewcommand\O\S
\newcommand\G\Gamma
\renewcommand\S\Sigma
\renewcommand\L\Lambda
\renewcommand\o\S
\def\bbm[#1]{\text{\boldmath $#1$}}
\renewcommand\leq{\leqslant}
\newtheorem{theorem}{Theorem}[section]
\newtheorem{lemma}[theorem]{Lemma}
\newtheorem{definition}[theorem]{Definition}
\newtheorem{remark}[theorem]{Remark}
\def\sideremark#1{\ifvmode\leavevmode\fi\vadjust{\vbox to0pt{\vss
\hbox to0pt{\hskip\hsize\hskip1em
\vbox{\hsize3cm\tiny\raggedright\pretolerance10000
\noindent #1\hfill}\hss}\vbox to8pt{\vfil}\vss}}}
\definecolor{edu}{rgb}{0,1,0.2}
\numberwithin{equation}{section}
\title[Nonlinear elliptic systems involving Hardy-Sobolev Criticalities]
{Nonlinear elliptic systems involving Hardy-Sobolev Criticalities}
\keywords{Systems of elliptic equations, Variational methods, Ground states, Bound states, Compactness principles, Critical Sobolev, Hardy Potential, Doubly critical problems.}%
\subjclass[2010]{Primary  35J47, 35J50, 35J60, 35Q55, 35Q40}
\author{Rafael L\'opez-Soriano, Alejandro Ortega}
\email[Rafael López-Soriano]{ralopezs@ugr.es}%
\email[Alejandro Ortega ]{alortega@math.uc3m.es}
\address[R. López-Soriano,]{Departamento de Análisis Matem\'atico,
Universidad de Granada, Campus Fuentenueva, 18071 Granada, Spain}
\address[A. Ortega]{Departamento de Matem\'aticas,
Universidad Carlos III de Madrid, Av. Universidad 30, 28911 Legan\'es (Madrid), Spain}
\begin{document}
\maketitle

\begin{abstract}
This paper is focused on the solvability of a family of nonlinear elliptic systems defined in $\mathbb{R}^N$. Such equations contain Hardy potentials and Hardy--Sobolev criticalities coupled by a possible critical Hardy--Sobolev term. That problem arises as a generalization of Gross--Pitaevskii and Bose--Einstein type systems. By means of variational techniques, we shall find ground and bound states in terms of the coupling parameter $\nu$ and the order of the different parameters and exponents. In particular, for a wide range of parameters we find solutions as minimizers or Mountain--Pass critical points of the energy functional on the underlying Nehari manifold. 
\end{abstract}

\section{Introduction}\setcounter{equation}0
In this work, we study the existence of positive bound and ground states for an elliptic system involving critical Hardy--Sobolev terms, namely
\begin{equation}\label{system:alphabeta}
\left\{\begin{array}{ll}
\displaystyle -\Delta u - \lambda_1 \frac{u}{|x|^2}-\frac{u^{2^*_{s}-1}}{|x|^{s}}= \nu \alpha h(x) \frac{u^{\alpha-1}v^\beta}{|x|^{s}}   &\text{in }\mathbb{R}^N,\vspace{.3cm}\\
\displaystyle -\Delta v - \lambda_2 \frac{v}{|x|^2}-\frac{v^{2^*_{s}-1}}{|x|^{s}}= \nu \beta h(x) \frac{u^\alpha v^{\beta-1}}{|x|^{s}} &\text{in }\mathbb{R}^N,\vspace{.3cm}\\
u,v> 0 & \text{in }\mathbb{R}^N\setminus\{0\},
\end{array}\right.
\end{equation}
where $\lambda_1,\lambda_2\in(0,\Lambda_N)$ being $\Lambda_N=\frac{(N-2)^2}{4}$ the best constant in the Hardy's inequality, $0< s<2$, $\nu>0$ and  $\alpha,\beta$ are real parameters such that 
\begin{equation}\label{alphabeta}\tag{$\alpha\beta$}
\alpha, \beta> 1 \qquad \mbox{ and } \qquad  \alpha+\beta\le2^*_{s}.
\end{equation}
The value $2^*_{s}=\frac{2(N-s)}{N-2}$ denotes the critical Hardy--Sobolev exponent, whereas $h\gneq 0$ is a function such that
\begin{equation}\label{H1}\tag{H}
h(x)\in L^{\frac{2^*_s}{2^*_s-\alpha-\beta},s}(\mathbb{R}^N),
\end{equation}
where $L^{p,s}(\mathbb{R}^N)$ denotes the weighted $L^p$-space of measurable functions $u$ such that
\[\| u\|_{L^{p,s}(\mathbb{R}^N)}^p:= \int_{\mathbb{R}^N}\frac{|u|^p}{|x|^s}dx<\infty.\]
In the case of having $\alpha+\beta=2^*_s$, then \eqref{H1} simply establishes $h\in L^{\infty}(\mathbb{R}^N)$.

The aim of this work is to establish some results concerning the existence of bound and ground states of \eqref{system:alphabeta}, extending the results given in \cite{CoLSOr} for the underlying system with critical Sobolev terms instead of Hardy--Sobolev ones. In particular, taking $s=0$, system \eqref{system:alphabeta} reads as the nonlinear elliptic system 
\begin{equation}\label{system:alphabeta_0}
\left\{\begin{array}{ll}
\displaystyle -\Delta u - \lambda_1 \frac{u}{|x|^2}-u^{2^*-1}= \nu \alpha h(x) u^{\alpha-1}v^\beta   &\text{in }\mathbb{R}^N,\vspace{.3cm}\\
\displaystyle -\Delta v - \lambda_2 \frac{v}{|x|^2}-v^{2^*-1}= \nu \beta h(x) u^\alpha v^{\beta-1} &\text{in }\mathbb{R}^N,\vspace{.3cm}\\
u,v> 0 & \text{in }\mathbb{R}^N\setminus\{0\},
\end{array}\right.
\end{equation}
where $2^*=2_0^*=\frac{2N}{N-2}$ is the classical critical Sobolev exponent. In \cite{CoLSOr} we proved the existence of positive bound and ground states in terms of the size of the coupling parameter $\nu$ and the values of $\alpha,\beta,\lambda_1,\lambda_2$. 

The system \eqref{system:alphabeta} is closely related to a system of coupled nonlinear Schr\"odinger equations, typically known as Gross--Pitaevskii like system. Such problem  arises, for instance, in the modelling in several physical phenomena, such as the Hartree--Fock theory for a double condensate, binary mixture Bose--Einstein condensates or nonlinear optics,  (cf. \cite{Esry, Frantz,Akhmediev,Kivshar} and references therein). In particular, if one looks for solitary-wave solutions to the above-mentioned Gross--Pitaevski system, one gets 
\begin{equation}\label{BSsystem}
\left\{\begin{array}{ll}
\displaystyle{-\Delta u + V_1(x) u= \mu_1 u^{3} + \nu u v^{2}}  &\text{in }\mathbb{R}^N,\vspace{.3cm}\\
-\Delta v + V_2(x) v = \mu_2 v^{3} + \nu u^{2}v &\text{in }\mathbb{R}^N,
\end{array}\right.
\end{equation}
for some potentials $V_1(x)$  and $V_2(x)$. We refer to \cite{CoLSOr} for specific details. Moreover, \eqref{BSsystem} can be seen as a particular case of the Schrödinger system
\begin{equation}\label{pBSsystem}
\left\{\begin{array}{ll}
\displaystyle{-\Delta u + V_1(x) u= \mu_1 u^{2p-1} + \nu u^{p-1} v^{p}}  &\text{in }\mathbb{R}^N,\vspace{.3cm}\\
-\Delta v + V_2(x) v = \mu_2 v^{2p-1} + \nu u^{p}v^{p-1} &\text{in }\mathbb{R}^N,
\end{array}\right.
\end{equation}
where $1<p\le\frac{N}{N-2}$ with $N\ge 3$. In the subcritical regime, namely $p<\frac{N}{N-2}$, the question of the existence and multiplicity of solutions for \eqref{pBSsystem} has been extensively analyzed under some assumptions on $V_j$ and $\nu$, see \cite{AC2, BW, LinWei, MMP, POMP}, among others. Concerning the critical regime with constant potentials, system \eqref{pBSsystem} admits only the trivial solution $(u,v)=(0,0)$ due to a Pohozaev--type identity. Introducing Hardy type potentials this situation changes dramatically, giving rise to the existence of nontrivial bound and ground states. \newline 
If one considers \eqref{pBSsystem} with critical Hardy--Sobolev terms and general couplings then system \eqref{system:alphabeta} arises. Finally, let us mention that taking $\alpha=2$ and $\beta=1$, the problem \eqref{system:alphabeta_0} exhibits some particularities with respect to the case \eqref{alphabeta}. That problem is usually known as a Schr\"odinger--Korteweg--De Vries type system. More details about solvability and compactness can be found in \cite{CoLSOr2}.

\

Systems with Hardy-Sobolev critical terms have not been much studied in the literature.  For that reason, as a first step, along this paper we shall study the influence of such criticalities supposing that they have the same order. That assumption allows us to control naturally coupled terms by using the decoupled ones, see the proof of Theorem~\ref{thmsemitrivialalphabeta}, for instance. Some results regarding the existence of solutions are available for bounded domains, see \cite{NyJa,ZhaZho}.  Actually, the order of the singularities in the Hardy-Sobolev terms may be different. For that case one can consider some embedding between the underlying functional spaces, which is not possible in entire space. Concerning the existence of solutions, we expect to give some results generalizing the hypothesis on the Hardy-Sobolev orders. As far as we know, this is the first paper where such problem is considered in $\mathbb{R}^N$. 

\
Let us point out that another novelty in this work is to highlight the role of the function $h$. Assumption \eqref{H1} guarantees certain homogeneity between the coupling and the decoupled terms of system \eqref{system:alphabeta}. In a certain sense, this hypothesis serves to compensate a possible lack of criticality of the coupling term. For the case $s=0$, studied in \cite{AbFePe,CoLSOr,CoLSOr2}, it is assumed that $h\in L^1(\mathbb{R}^N)\cap L^\infty(\mathbb{R}^N)$ if $\alpha+\beta<2^*$, which  in turn also implies that $h\in L^q(\mathbb{R}^N)$ for every $q>1$. Our approach relaxes such condition by choosing the appropriate intermediate functional space that still allows $h$ to control de concentration phenomena at $0$ and $\infty$, namely, $h\in L^{\frac{2^*}{2^*-\alpha-\beta}}(\mathbb{R}^N)$ if $s=0$ or $h\in L^{\frac{2_s^*}{2_s^*-\alpha-\beta},s}(\mathbb{R}^N)$ if $s>0$.
 
\

Our main goal is then to prove, by means of a variational approach, the existence of positive solutions to the system \eqref{system:alphabeta} in terms of the different parameters. In particular, we will look for solutions as critical points of the associated energy functional
\begin{equation}\label{functalphabeta}
\begin{split}
\mathcal{J}_\nu (u,v)=&\frac{1}{2} \int_{\mathbb{R}^N} \left( |\nabla u|^2 + |\nabla v|^2  \right) \, dx -\frac{\lambda_1}{2} \int_{\mathbb{R}^N} \dfrac{u^2}{|x|^2} \, dx   -\frac{\lambda_2}{2} \int_{\mathbb{R}^N} \dfrac{v^2}{|x|^2} \, dx  \vspace{0,7cm} \\
&- \frac{1}{2^*_{s}} \int_{\mathbb{R}^N} \frac{|u|^{2^*_{s}}}{|x|^{s}} \, dx - \frac{1}{2^*_{s}} \int_{\mathbb{R}^N} \frac{|v|^{2^*_{s}}}{|x|^{s}} \, dx -\nu \int_{\mathbb{R}^N} h(x) \frac{|u|^\alpha |v|^{\beta}}{|x|^{s}} \, dx  ,
\end{split}
\end{equation}
defined in the the Sobolev space $\mathbb{D}=\mathcal{D}^{1,2} (\mathbb{R}^N)\times \mathcal{D}^{1,2}(\mathbb{R}^N)$ with $\mathcal{D}^{1,2}(\mathbb{R}^N)$ defined as the completion of $C^{\infty}_0(\mathbb{R}^N)$ under the norm
\begin{equation*}
\|u\|_{\mathcal{D}^{1,2}(\mathbb{R}^N)}^2=\int_{\mathbb{R}^N} \, |\nabla u|^2  \, dx  .
\end{equation*}
A standard strategy is then to localize critical points of $\mathcal{J}_\nu$ on the underlying Nehari manifold where $\mathcal{J}_\nu$ is bounded. On the other hand, this variational approach also requires some compactness properties. This feature is given through a Palais--Smale condition, relying on the well known \textit{concentration-compactness principle}, cf. \cite{Lions1,Lions2}. Essentially, at this step the related difficulties rely on the lack of compactness of the map
\begin{equation*} 
u\mapsto \dfrac{u}{|x|^{\frac{s}{2^*_s}}},
\end{equation*} 
from $\mathcal{D}^{1,2}(\mathbb{R}^N)$ into $L^{q}(\mathbb{R}^N)$ when $q=2^*_s$, while this embedding is compact if $q<2^*_s$, cf. \cite[Lemma~3.2]{GhYu}. Then, the nonlinear coupling term, $\dfrac{u^\alpha v^\beta}{|x|^s}$, may be critical depending on the value of $\alpha+\beta$. We distinguish thus between the subcritical regime $\alpha+\beta< 2^*_{s}$, where the compactness follows by the above-mentioned compact embedding, and the critical regime $\alpha+\beta= 2^*_{s}$, that requires a more careful analysis. In this critical setting, the following hypotheses will be assumed
\begin{equation}\label{H}\tag{H0}
h \mbox{ is continuous around $0$ and $\infty$ and }h(0)=\lim_{x\to+\infty} h(x)=0.
\end{equation}
Section~\ref{section:PS} is then devoted to the analysis of the Palais--Smale condition under a {\it quantization} of the energy levels of $\mathcal{J}_{\nu}$.

Next, observe that, for any $\nu \in \mathbb{R}$, the system \eqref{system:alphabeta} admits two \textit{semi-trivial} positive solutions $(z_1,0)$ and $(0,z_2)$, where $z_j$ satisfies the entire problem
\begin{equation*}
-\Delta z_j - \lambda_j \frac{z_j}{|x|^2}=\frac{z_j^{2^*_{s}-1}}{|x|^{s}} \qquad \mbox{ and } \qquad z_j>0 \qquad \mbox{ in } \mathbb{R}^N\setminus \{ 0\}.
\end{equation*}
Notice that this problem is invariant under the scaling $z^{(j)}_\mu (x) =\mu^{-\frac{N-2}{2}} z_j\left(\frac{x}{\mu} \right)$. The explicit expression of $z_j$ was found by Kang and Peng in \cite{KangPeng}, which is recalled in Section~\ref{section2} jointly with several qualitative properties. From now on, we denote the \textit{semi-trivial} energy levels as
\begin{equation*}
\mathcal{J}_\nu(z^{(1)}_\mu,0)=\mathfrak{C}(\lambda_1,s) \qquad \mathcal{J}_\nu(0,z^{(2)}_\mu)=\mathfrak{C}(\lambda_2,s),
\end{equation*}
where $\mathfrak{C}(\lambda,s)$ is defined in subsection~\ref{subsection:semitrivials} (see \eqref{critical_levels} below).
\

These \textit{semi-trivial} solutions will play an important role in our analysis. As a first step we provide a characterization of the \textit{semi-trivial} solutions as critical points of $\mathcal{J}_\nu$ in Theorem~\ref{thmsemitrivialalphabeta}. More precisely, we will prove that the couples $(z^{(1)}_\mu,0)$ and $(0,z^{(2)}_\mu)$ become either a local minimum or a saddle point of $\mathcal{J}_\nu$ on the corresponding Nehari manifold under some hypotheses on the parameters $\nu,\alpha,\beta$. Such classification allow us to study the geometry of the functional $\mathcal{J}_\nu$ and to obtain some energy estimates, crucial to deduce existence of solutions.\newline 
The coupling parameter $\nu$ and the exponents $\alpha,\beta$  have a subtle effect in the geometry of the functional $\mathcal{J}_\nu$. If $\nu$ is large enough, the minimum energy level is strictly lower than that of the \textit{semi-trivial} solutions. Then, one can find a positive ground state by minimizing.
\begin{theorem}\label{thm:nugrande}
Assume \eqref{H1} and either $\alpha+\beta<2^*_s$ or $\alpha+\beta=2^*_s$ satisfying \eqref{H}. Then there exists $\overline{\nu}>0$ such that the system \eqref{system:alphabeta} has a positive ground state $(\tilde{u},\tilde{v}) \in \mathbb{D}$ for every $\nu>\overline{\nu}$.
\end{theorem}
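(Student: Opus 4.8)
The plan is to realize the ground state as a minimizer of $\mathcal{J}_\nu$ over the Nehari manifold
\[
\mathcal{N}_\nu=\big\{(u,v)\in\D\setminus\{(0,0)\}:\ \langle\mathcal{J}_\nu'(u,v),(u,v)\rangle=0\big\},
\]
and to exploit the fact that, for $\nu$ large, the associated level $c_\nu:=\inf_{\mathcal{N}_\nu}\mathcal{J}_\nu$ drops below the \emph{semi-trivial} threshold $\mathfrak{m}:=\min\{\mathfrak{C}(\lambda_1,s),\mathfrak{C}(\lambda_2,s)\}$, where the Palais--Smale condition is available. Writing $\mathcal{Q}(u,v)=\irn\big(|\nabla u|^2+|\nabla v|^2\big)\,dx-\lambda_1\irn\frac{u^2}{|x|^2}\,dx-\lambda_2\irn\frac{v^2}{|x|^2}\,dx$ for the quadratic part of $\mathcal{J}_\nu$ (comparable to $\|(u,v)\|_{\D}^2$ by Hardy's inequality, since $\lambda_1,\lambda_2<\Lambda_N$), I would first record the two identities valid on $\mathcal{N}_\nu$: subtracting $\tfrac1{2^*_s}\langle\mathcal{J}_\nu'(u,v),(u,v)\rangle$ from $\mathcal{J}_\nu(u,v)$ gives
\[
\mathcal{J}_\nu(u,v)=\Big(\tfrac12-\tfrac1{2^*_s}\Big)\mathcal{Q}(u,v)+\nu\Big(\tfrac{\alpha+\beta}{2^*_s}-1\Big)\irn h(x)\frac{|u|^\alpha|v|^\beta}{|x|^s}\,dx,
\]
while subtracting $\tfrac1{\alpha+\beta}\langle\mathcal{J}_\nu'(u,v),(u,v)\rangle$ cancels the coupling term and yields
\[
\mathcal{J}_\nu(u,v)=\Big(\tfrac12-\tfrac1{\alpha+\beta}\Big)\mathcal{Q}(u,v)+\Big(\tfrac1{\alpha+\beta}-\tfrac1{2^*_s}\Big)\irn\frac{|u|^{2^*_s}+|v|^{2^*_s}}{|x|^s}\,dx.
\]
Since $2<\alpha+\beta\le2^*_s$, both right-hand sides are nonnegative; combining the second one with the lower bound $\|(u,v)\|_{\D}\ge\rho_\nu>0$ on $\mathcal{N}_\nu$ (obtained by inserting the Hardy--Sobolev inequality into the Nehari constraint) shows that $c_\nu$ is well defined and strictly positive, while testing with the \emph{semi-trivial} pairs characterised in Theorem~\ref{thmsemitrivialalphabeta} gives $c_\nu\le\mathfrak{m}$.

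For the strict inequality $c_\nu<\mathfrak{m}$ I would use an explicit test pair. Pick $\varphi\in C^\infty_0(\RN\setminus\{0\})$ with $\varphi\ge0$ and support in $\{h>0\}$ (nonempty because $h\gneq0$; under \eqref{H} it moreover stays away from the origin and from infinity), so that $d:=\irn h(x)\varphi^{\alpha+\beta}|x|^{-s}\,dx>0$. For each $t>0$ the pair $(t\varphi,t\varphi)$ lies on $\mathcal{N}_\nu$ exactly when
\[
t^{2}\,\mathcal{Q}(\varphi,\varphi)=t^{2^*_s}\irn\frac{2\varphi^{2^*_s}}{|x|^s}\,dx+\nu(\alpha+\beta)\,t^{\alpha+\beta}d,
\]
and since $2<\alpha+\beta\le2^*_s$ this equation has a unique root $t_\nu>0$; moreover $\mathcal{Q}(\varphi,\varphi)\ge\nu(\alpha+\beta)t_\nu^{\alpha+\beta-2}d$ forces $t_\nu\to0$ and hence $\nu t_\nu^{\alpha+\beta}\to0$ as $\nu\to\infty$. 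Inserting $t_\nu$ into the first identity above and discarding the nonpositive coupling term yields $0<c_\nu\le\mathcal{J}_\nu(t_\nu\varphi,t_\nu\varphi)\le\big(\tfrac12-\tfrac1{2^*_s}\big)t_\nu^{2}\,\mathcal{Q}(\varphi,\varphi)\to0$. Therefore $\limsup_{\nu\to\infty}c_\nu\le0<\mathfrak{m}$, and there exists $\overline{\nu}>0$ such that $c_\nu<\mathfrak{m}$ for every $\nu>\overline{\nu}$.

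Fix $\nu>\overline{\nu}$. Applying Ekeland's variational principle on the natural constraint $\mathcal{N}_\nu$ produces a Palais--Smale sequence for $\mathcal{J}_\nu$ at level $c_\nu$. Because $c_\nu\in(0,\mathfrak{m})$ lies strictly below the first energy level at which compactness may be lost, the $(PS)_{c_\nu}$ condition established in Section~\ref{section:PS} (in the critical case $\alpha+\beta=2^*_s$ this is precisely where \eqref{H} enters, to prevent the coupling term from concentrating at $0$ or escaping to $\infty$) gives, along a subsequence, strong convergence in $\D$ to a critical point $(\tilde u,\tilde v)$ with $\mathcal{J}_\nu(\tilde u,\tilde v)=c_\nu$. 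Neither component can vanish: if, say, $\tilde v\equiv0$, then either $\tilde u\equiv0$, forcing $c_\nu=0$ against $c_\nu>0$, or $\tilde u$ is a nontrivial solution of the scalar Hardy--Sobolev equation, so $c_\nu=\mathcal{J}_\nu(\tilde u,0)\ge\mathfrak{C}(\lambda_1,s)\ge\mathfrak{m}$, against $c_\nu<\mathfrak{m}$ (here one uses that $\mathfrak{C}(\lambda_1,s)$ is the least-energy level of the scalar problem, attained by $z^{(1)}_\mu$, as recalled in Section~\ref{section2}); the same argument gives $\tilde u\not\equiv0$. Since $\mathcal{J}_\nu$ and $\mathcal{N}_\nu$ are invariant under $(u,v)\mapsto(|u|,|v|)$, we may take $\tilde u,\tilde v\ge0$, and elliptic regularity away from the origin together with the Harnack/strong maximum principle for $-\Delta-\lambda_j|x|^{-2}$ upgrades this to $\tilde u,\tilde v>0$ in $\RN\setminus\{0\}$. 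As every nontrivial critical point of $\mathcal{J}_\nu$ belongs to $\mathcal{N}_\nu$, the pair $(\tilde u,\tilde v)$ is a ground state.

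The technical heart of the argument is the verification that $(PS)_{c}$ holds for $c<\mathfrak{m}$, i.e. the concentration--compactness/energy--quantization analysis of Section~\ref{section:PS}: this is where the criticality of the Hardy--Sobolev terms $u^{2^*_s-1}|x|^{-s}$, and (when $\alpha+\beta=2^*_s$) of the coupling term, is felt, and where \eqref{H} is indispensable in the critical regime. By contrast, the two Nehari identities, the uniqueness and asymptotics of $t_\nu$, and the non-vanishing of the limit via comparison with the scalar least-energy level are routine.
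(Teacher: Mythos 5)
Your proposal is correct and follows essentially the same route as the paper: minimization of $\mathcal{J}_\nu$ on the Nehari manifold, showing via the scaling $t_\nu\to 0$ that the infimum drops strictly below $\min\{\mathfrak{C}(\lambda_1,s),\mathfrak{C}(\lambda_2,s)\}$ for $\nu$ large, invoking the Palais--Smale lemmas of Section~\ref{section:PS} (Lemma~\ref{lemmaPS2} in the subcritical case, Lemma~\ref{lemcritic} under \eqref{H} in the critical case), and then ruling out semi-trivial limits by energy comparison and upgrading to positivity via $(|u|,|v|)$, regularity and the maximum principle. Your explicit choice of a test pair $(t\varphi,t\varphi)$ with coupling integral positive is a slightly more careful version of the paper's argument with a generic $(u,v)$, but it is the same idea, not a different proof.
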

Since $\mathfrak{C}(\lambda,s)$ is decreasing in $\lambda$, the order between the energy of the \textit{semi-trivial} solutions depends on that of $\lambda_1, \lambda_2$. Indeed, if $\lambda_1\ge \lambda_2$, then $\mathfrak{C}(\lambda_1,s)\le \mathfrak{C}(\lambda_2,s)$ and the minimum energy corresponds to $(z^{(1)}_\mu,0)$, which is a saddle point under some assumptions on $\beta$ and $\nu$. Alternatively, if $\lambda_1<\lambda_2$, the minimum energy corresponds to $(0,z^{(2)}_\mu)$ which may be a saddle point depending on $\alpha$ and $\nu$. Both situations provide the existence of a positive ground state.
\begin{theorem}\label{thm:lambdaground}
Assume \eqref{H1} and either $\alpha+\beta<2^*_s$ or $\alpha+\beta=2^*_s$ satisfying \eqref{H}. If one of the following statements is satisfied
\begin{itemize}
\item[$i)$] $\lambda_1\ge \lambda_2$ and either $\beta=2$ and $\nu$ large enough or $\beta<2$,
\item[$ii)$] $\lambda_1\le \lambda_2$ and either $\alpha=2$ and $\nu$ large enough or $\alpha<2$,
\end{itemize}
then system \eqref{system:alphabeta} admits a positive ground state $(\tilde{u},\tilde{v})\in\mathbb{D}$.

In particular, if $\max\{\alpha,\beta\}<2$ or $\max\{\alpha,\beta\}\le2$ with $\nu$ sufficiently large, then system \eqref{system:alphabeta} admits a positive ground state $(\tilde{u},\tilde{v})\in\mathbb{D}$.
\end{theorem}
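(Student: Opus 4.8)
The plan is to realize the ground state as a minimizer of $\mathcal{J}_\nu$ on the Nehari manifold
\[
\mathcal{N}_\nu=\bigl\{(u,v)\in\mathbb{D}\setminus\{(0,0)\}:\langle\mathcal{J}_\nu'(u,v),(u,v)\rangle=0\bigr\},
\]
and to set $c_\nu=\inf_{\mathcal{N}_\nu}\mathcal{J}_\nu$. First I would record the routine structural facts. Since $\lambda_1,\lambda_2<\Lambda_N$, Hardy's inequality makes the quadratic part of $\mathcal{J}_\nu$ equivalent to $\|(u,v)\|_{\mathbb{D}}^2$; combining this with the Nehari constraint and the Hardy--Sobolev embedding (using $2^*_s>2$ and $\alpha+\beta>2$) gives a uniform lower bound $\|(u,v)\|_{\mathbb{D}}\ge\rho>0$ on $\mathcal{N}_\nu$, so $0<c_\nu<\infty$, minimizing sequences are bounded, and $\mathcal{N}_\nu$ is a natural constraint, whence any minimizer of $\mathcal{J}_\nu$ on $\mathcal{N}_\nu$ is a free critical point. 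Because $h\gneq0$ one has $\mathcal{J}_\nu(|u|,|v|)\le\mathcal{J}_\nu(u,v)$, so the minimizer may be taken nonnegative, and the Harnack inequality for $-\Delta-\lambda_j|x|^{-2}$ then upgrades it to a strictly positive solution of \eqref{system:alphabeta} on $\mathbb{R}^N\setminus\{0\}$.

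\textbf{The key inequality and its consequences.} The core of the argument is the strict inequality
\[
c_\nu<\min\{\mathfrak{C}(\lambda_1,s),\mathfrak{C}(\lambda_2,s)\}.
\]
Granting it, two consequences follow. First, by the quantization analysis of Section~\ref{section:PS} (which, in the critical coupling case, is exactly where \eqref{H} or \eqref{H1} enters) every energy level at which a Palais--Smale sequence of $\mathcal{J}_\nu$ may lose compactness is bounded below by $\min\{\mathfrak{C}(\lambda_1,s),\mathfrak{C}(\lambda_2,s)\}$ — the possible leaks being rescaled Hardy--Sobolev bubbles carrying energy $\mathfrak{C}(\lambda_1,s)$ or $\mathfrak{C}(\lambda_2,s)$, possibly superposed with a nontrivial limit of nonnegative energy. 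Hence $c_\nu$ is a compactness level, and Ekeland's variational principle on $\mathcal{N}_\nu$ yields a $(PS)_{c_\nu}$ sequence converging strongly to a minimizer $(\tilde u,\tilde v)$. Second, $(\tilde u,\tilde v)$ cannot be semi-trivial: if $\tilde v\equiv0$ then $(\tilde u,0)\in\mathcal{N}_\nu$ reduces to $\tilde u$ lying on the scalar Nehari manifold of the single equation for $z_1$, so $\mathcal{J}_\nu(\tilde u,0)\ge\mathfrak{C}(\lambda_1,s)>c_\nu$, a contradiction, and symmetrically $\tilde u\not\equiv0$. Therefore $(\tilde u,\tilde v)$ is a positive ground state.

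\textbf{Proof of the strict inequality.} This is where the dichotomy $i)$--$ii)$ and Theorem~\ref{thmsemitrivialalphabeta} are used. Suppose $i)$ holds with $\lambda_1\ge\lambda_2$; since $\mathfrak{C}(\cdot,s)$ is decreasing we have $\mathfrak{C}(\lambda_1,s)\le\mathfrak{C}(\lambda_2,s)$, so the relevant threshold is the energy $\mathfrak{C}(\lambda_1,s)$ of $(z^{(1)}_\mu,0)$. Under the hypothesis on $\beta$ (either $\beta<2$, or $\beta=2$ with $\nu$ large) Theorem~\ref{thmsemitrivialalphabeta} asserts that $(z^{(1)}_\mu,0)$ is a saddle point of $\mathcal{J}_\nu$ on $\mathcal{N}_\nu$; concretely there is a tangent direction, obtained by switching on the second component, along which the second variation is negative. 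Evaluating $\mathcal{J}_\nu$ along the curve $t\mapsto(z^{(1)}_\mu,t\psi)$ for a suitable $\psi\gneq0$ and reprojecting onto $\mathcal{N}_\nu$, one finds for small $t>0$ a point of $\mathcal{N}_\nu$ with energy strictly below $\mathfrak{C}(\lambda_1,s)$, hence $c_\nu<\mathfrak{C}(\lambda_1,s)$. The mechanism is the competition, as $t\to0^+$, between the quadratic cost $\frac{t^2}{2}\int_{\mathbb{R}^N}\bigl(|\nabla\psi|^2-\lambda_2\psi^2|x|^{-2}\bigr)\,dx$ and the coupling gain $\nu\,t^\beta\int_{\mathbb{R}^N}h\,z_1^\alpha\psi^\beta|x|^{-s}\,dx$: when $\beta<2$ the gain dominates for small $t$, while when $\beta=2$ the two are of the same order, so one needs $\nu$ large for the gain to prevail. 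Case $ii)$ with $\lambda_1\le\lambda_2$ is the mirror image, exchanging the two components and $\alpha\leftrightarrow\beta$. Finally, the ``in particular'' statement is immediate: $\max\{\alpha,\beta\}<2$ forces both $\alpha<2$ and $\beta<2$, so one of $i)$, $ii)$ applies according as $\lambda_1\ge\lambda_2$ or $\lambda_1\le\lambda_2$ (either one if $\lambda_1=\lambda_2$), and likewise when $\max\{\alpha,\beta\}\le2$ with $\nu$ large.

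\textbf{Main obstacle.} The principal difficulty is exactly this strict energy estimate: one must convert the qualitative ``saddle'' information of Theorem~\ref{thmsemitrivialalphabeta} into a concrete admissible competitor, control the Nehari reprojection and the Hardy, Hardy--Sobolev and coupling integrals along it uniformly, and treat with care the borderline cases $\beta=2$ (resp.\ $\alpha=2$), where the improvement over $\mathfrak{C}(\lambda_i,s)$ is only linear in $\nu$ and thus needs $\nu$ large. A secondary, more technical point is checking that $c_\nu$ indeed lies below the first noncompactness level in every regime covered — the subcritical one $\alpha+\beta<2^*_s$, where compactness of the coupling term is automatic, and the critical one $\alpha+\beta=2^*_s$ under \eqref{H} or \eqref{H1} — so that the Palais--Smale analysis of Section~\ref{section:PS} applies.
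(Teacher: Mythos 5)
Your proposal is correct and follows essentially the same route as the paper: use Theorem~\ref{thmsemitrivialalphabeta} to show that the lower-energy semi-trivial solution is a saddle point on $\mathcal{N}_\nu$, deduce $\tilde{c}_\nu<\min\{\mathfrak{C}(\lambda_1,s),\mathfrak{C}(\lambda_2,s)\}$, and then invoke the Palais--Smale lemmas of Section~\ref{section:PS} (Lemma~\ref{lemmaPS2} in the subcritical case, Lemmas~\ref{lemcritic} and~\ref{lemcritic2} in the critical case) to obtain a minimizer, which is shown to be non-semi-trivial and positive exactly as in Theorem~\ref{thm:nugrande}. Your explicit expansion of the cost/gain competition along $t\mapsto(z^{(1)}_\mu,t\psi)$ is just a restatement of the proof of parts $iii)$--$iv)$ of Theorem~\ref{thmsemitrivialalphabeta}, so no genuinely new ingredient is involved.
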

Next, we analyze the reverse situation. If $\lambda_1>  \lambda_2$, the minimum energy among the {\it semi-trivial} solutions is that of $(z^{(1)}_\mu,0)$. Indeed, if either $\beta>2$ or $\beta=2$ with $\nu$ small enough, $(z^{(1)}_\mu,0)$ is a local minimum. An analogous situation holds for ($0,z^{(2)}_\mu)$. Under that hypotheses, there exists $\nu>0$ small such a {\it semi-trivial} solution becomes a ground state of \eqref{system:alphabeta}.
\begin{theorem}\label{thm:groundstatesalphabeta}
Assume \eqref{H1} and either $\alpha+\beta<2^*_s$ or $\alpha+\beta=2^*_s$ satisfying \eqref{H}. Then, 
\begin{itemize}
\item[$i)$] If $\alpha\ge 2$ and {$\lambda_1<\lambda_2$}, then there exists $\tilde{\nu}>0$ such that for any $0<\nu<\tilde{\nu}$  the couple $(0,z_\mu^{(2)})$ is the ground state of \eqref{system:alphabeta}.
\item[$ii)$]  If $\beta\ge 2$ and {$\lambda_1>\lambda_2$}, then there exists $\tilde{\nu}>0$ such that for any $0<\nu<\tilde{\nu}$  the couple $(z_\mu^{(1)},0)$ is the ground state of \eqref{system:alphabeta}.
\item[$iii)$] In particular, if $\alpha,\beta \ge 2$, then there exists $\tilde{\nu}>0$ such that for any $0<\nu<\tilde{\nu}$, the couple $(0, z_\mu^{(2)})$ is a ground state of \eqref{system:alphabeta} if ${\lambda_1<\lambda_2}$, whereas $(z_\mu^{(1)},0)$ is a ground state otherwise.
\end{itemize}
\end{theorem}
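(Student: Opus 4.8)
The plan is to identify, for $\nu$ small, the ground state level $c_\nu:=\inf_{\mathcal{N}_\nu}\mathcal{J}_\nu$ on the Nehari manifold $\mathcal{N}_\nu$ with the smaller of the two \emph{semi-trivial} energies, and to check it is realised there. I would treat $i)$ in detail ($\alpha\ge2$, $\lambda_1<\lambda_2$); $ii)$ is symmetric under the exchange of the two components (using $\beta\ge2$ and $\lambda_1>\lambda_2$), and $iii)$ results from running $i)$ and $ii)$ at once. Since $(z^{(1)}_\mu,0),(0,z^{(2)}_\mu)\in\mathcal{N}_\nu$ for every $\nu$, with $\mathcal{J}_\nu(z^{(1)}_\mu,0)=\mathfrak{C}(\lambda_1,s)$, $\mathcal{J}_\nu(0,z^{(2)}_\mu)=\mathfrak{C}(\lambda_2,s)$, and since $\mathfrak{C}(\cdot,s)$ is decreasing so that $\mathfrak{C}(\lambda_2,s)<\mathfrak{C}(\lambda_1,s)$, one has automatically $c_\nu\le\mathfrak{C}(\lambda_2,s)$. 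Hence the assertion ``$(0,z^{(2)}_\mu)$ is a ground state'' amounts to proving the reverse inequality $c_\nu\ge\mathfrak{C}(\lambda_2,s)$ for all $\nu$ sufficiently small.

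I would prove this by contradiction, assuming $c_{\nu_k}<\mathfrak{C}(\lambda_2,s)$ for some sequence $\nu_k\downarrow0$. The value $\mathfrak{C}(\lambda_2,s)=\min\{\mathfrak{C}(\lambda_1,s),\mathfrak{C}(\lambda_2,s)\}$ is the first threshold below which $\mathcal{J}_{\nu_k}$ satisfies the Palais--Smale condition — in the subcritical regime because the coupling term is then weakly continuous, by the compact embedding in \cite[Lemma~3.2]{GhYu}, and in the critical regime by the energy quantization of Section~\ref{section:PS} under \eqref{H} or \eqref{H1}. Therefore $c_{\nu_k}$ is attained: starting from a minimizing sequence on $\mathcal{N}_{\nu_k}$ and invoking Ekeland's variational principle, I get a Palais--Smale sequence at level $c_{\nu_k}$ which, lying below the threshold, is relatively compact up to the scaling $z\mapsto z_\mu$; its limit $(u_k,v_k)\in\mathcal{N}_{\nu_k}$ is a nonnegative critical point of $\mathcal{J}_{\nu_k}$ with energy $c_{\nu_k}$ (the Nehari manifold being a natural constraint). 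It cannot be \emph{semi-trivial}, since both \emph{semi-trivial} levels are $\ge\mathfrak{C}(\lambda_2,s)>c_{\nu_k}$; hence $u_k\not\equiv0\not\equiv v_k$, and the two scalar Nehari identities $\langle\partial_u\mathcal{J}_{\nu_k}(u_k,v_k),u_k\rangle=\langle\partial_v\mathcal{J}_{\nu_k}(u_k,v_k),v_k\rangle=0$ hold.

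The heart of the matter is then an asymptotic analysis of $(u_k,v_k)$. Writing $\|w\|_\lambda^2:=\int_{\mathbb{R}^N}\big(|\nabla w|^2-\lambda|x|^{-2}w^2\big)\,dx$, the bound $c_{\nu_k}<\mathfrak{C}(\lambda_2,s)$ and the Nehari constraint give that $\|u_k\|_{\lambda_1}^2+\|v_k\|_{\lambda_2}^2$ is bounded, hence so is the coupling integral $B_k:=\int_{\mathbb{R}^N}h\,|u_k|^\alpha|v_k|^\beta|x|^{-s}\,dx$ (Hölder plus Hardy--Sobolev), whence $\nu_kB_k\to0$; a standard lower estimate also yields $\|u_k\|_{\lambda_1}^2+\|v_k\|_{\lambda_2}^2\ge\delta>0$, so $c_{\nu_k}\ge\delta'>0$. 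Consequently the scalar identities collapse to $\|u_k\|_{\lambda_1}^2=\int|u_k|^{2^*_s}|x|^{-s}\,dx+o(1)$, $\|v_k\|_{\lambda_2}^2=\int|v_k|^{2^*_s}|x|^{-s}\,dx+o(1)$, and $\mathcal{J}_{\nu_k}(u_k,v_k)=\frac{2-s}{2(N-s)}\big(\int|u_k|^{2^*_s}|x|^{-s}\,dx+\int|v_k|^{2^*_s}|x|^{-s}\,dx\big)+o(1)$. Passing to a subsequence, each of $\|u_k\|_{\lambda_1},\|v_k\|_{\lambda_2}$ either tends to $0$ or stays bounded away from $0$; when it stays away from $0$, the Hardy--Sobolev inequality forces the corresponding term $\int|u_k|^{2^*_s}|x|^{-s}\,dx$ (resp. for $v_k$) to be asymptotically at least $S(\lambda_1,s)^{\frac{N-s}{2-s}}$ (resp. $S(\lambda_2,s)^{\frac{N-s}{2-s}}$), with $S(\lambda,s)$ the best Hardy--Sobolev constant, i.e. that component contributes at least $\mathfrak{C}(\lambda_1,s)$ (resp. $\mathfrak{C}(\lambda_2,s)$) to the energy. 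Hence: if both components survive, $\liminf_k c_{\nu_k}\ge\mathfrak{C}(\lambda_1,s)+\mathfrak{C}(\lambda_2,s)$; if only $u_k$ survives, $\liminf_k c_{\nu_k}\ge\mathfrak{C}(\lambda_1,s)>\mathfrak{C}(\lambda_2,s)$; both vanishing is excluded by $c_{\nu_k}\ge\delta'>0$; and if only $v_k$ survives, $\liminf_k c_{\nu_k}\ge\mathfrak{C}(\lambda_2,s)$.

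The main obstacle is precisely this last alternative, $u_k\to0$ with $\|v_k\|_{\lambda_2}\not\to0$: the other three contradict $c_{\nu_k}<\mathfrak{C}(\lambda_2,s)$ outright, but this one does not, and it is exactly here that $\alpha\ge2$ is needed (and exactly this alternative that prevents a genuinely coupled ground state below $\mathfrak{C}(\lambda_2,s)$ from existing for tiny $\nu$ when $\alpha<2$). To discard it I would insert $B_k\le C\|u_k\|_{\lambda_1}^\alpha$ (valid since $\|v_k\|_{\lambda_2}$ is bounded) into the first scalar identity and combine with $\|u_k\|_{\lambda_1}^2\ge S(\lambda_1,s)\big(\int|u_k|^{2^*_s}|x|^{-s}\,dx\big)^{2/2^*_s}$, which for $\sigma_k:=\|u_k\|_{\lambda_1}^2\to0$ gives $1\le C\sigma_k^{\frac{2^*_s}{2}-1}+C\nu_k\sigma_k^{\frac{\alpha}{2}-1}$; since $2^*_s>2$ the first term tends to $0$, and since $\alpha\ge2$ the factor $\sigma_k^{\frac{\alpha}{2}-1}$ is bounded (it equals $1$ when $\alpha=2$), so the second term is $O(\nu_k)\to0$ — a contradiction. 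This forces $c_\nu=\mathfrak{C}(\lambda_2,s)$ for all small $\nu$, hence $(0,z^{(2)}_\mu)$ realises the ground state level, which is item $i)$; items $ii)$ and $iii)$ follow as indicated above, choosing a common $\tilde\nu$.
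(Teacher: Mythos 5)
Your argument is correct and follows the same skeleton as the paper's proof: argue by contradiction along $\nu_k\downarrow 0$ with the Nehari infimum below $\mathfrak{C}(\lambda_2,s)$, use the Palais--Smale lemmas of Section~\ref{section:PS} (whose thresholds are exactly $\min\{\mathfrak{C}(\lambda_1,s),\mathfrak{C}(\lambda_2,s)\}$) to attain the level by a nonnegative, fully nontrivial critical point, and then exploit $\alpha\ge 2$ to show the $u$-component cannot carry asymptotically small mass. Where you differ is in how that last step is implemented and how the proof ends. The paper works with $\sigma_{1,n}=\int |x|^{-s}\tilde u_n^{2^*_s}\,dx$, feeds the tested first equation plus H\"older into Lemma~\ref{algelemma} (with $\theta=\alpha\ge2$), and obtains the quantitative lower bound $\sigma_{1,n}>(1-\varepsilon)[\mathcal{S}(\lambda_1,s)]^{\frac{N-s}{2-s}}\ge[\mathcal{S}(\lambda_2,s)]^{\frac{N-s}{2-s}}$ (here $\lambda_1<\lambda_2$ enters), which contradicts the total-mass bound $\tfrac{2-s}{2(N-s)}(\sigma_{1,n}+\sigma_{2,n})<\mathfrak{C}(\lambda_2,s)$ in one stroke; you instead run a vanishing/non-vanishing dichotomy on $\|u_k\|_{\lambda_1}$, $\|v_k\|_{\lambda_2}$, dispose of three cases by the quantization of the limiting masses, and kill the only delicate case ($\sigma_k=\|u_k\|_{\lambda_1}^2\to0$) by dividing the scalar Nehari identity by $\sigma_k$, i.e.\ $1\le C\sigma_k^{\frac{2^*_s}{2}-1}+C\nu_k\sigma_k^{\frac{\alpha}{2}-1}$ — which is in essence a hand-made proof of the special case of Lemma~\ref{algelemma} you need, with $\alpha\ge2$ playing the identical role. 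Your endgame is also lighter and perfectly adequate for the statement: since every nonnegative nontrivial critical point lies on $\mathcal{N}_\nu$ and $\inf_{\mathcal{N}_\nu}\mathcal{J}_\nu=\mathfrak{C}(\lambda_2,s)$, the critical point $(0,z_\mu^{(2)})$ attains the ground-state level \eqref{ctilde}, so you can skip the paper's final classification of the Nehari minimizer as $(0,\pm z_\mu^{(2)})$ via the sign argument. One shared caveat to flag, not a new gap of yours: the uniform bound $B_k\le C\|u_k\|_{\lambda_1}^{\alpha}$ (and the H\"older step behind it) is clean when $\alpha+\beta=2^*_s$, but in the strictly subcritical regime the factor $\bigl(\int |x|^{-s}|v_k|^{\beta q}\,dx\bigr)^{1/q}$ with $\beta q<2^*_s$ is not scale-invariantly controlled by $\|v_k\|_{\lambda_2}$ alone, an issue already present in the paper's own \eqref{Hold1}--\eqref{Hold2} and in its use of \eqref{eq:Hof} inside this very proof, so your argument is no less rigorous than the original at this point.
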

Finally, we found bound states by using a min-max procedure. More precisely, we show that the energy functional admits a Mountain--Pass geometry for certain $\lambda_1,\lambda_2$ verifying a separability condition, that allows us to separate the \textit{semi-trivial} energy levels in a suitable way. The geometry of $\mathcal{J}_\nu$ jointly with the PS condition give us the existence of solution.
\begin{theorem}\label{MPgeom}
Assume \eqref{H1} and either $\alpha+\beta<2^*_s$ or $\alpha+\beta=2^*_s$ satisfying \eqref{H}. If
\begin{itemize}
\item[$i)$] Either $\alpha \ge 2$ and
\begin{equation}\label{lamdasalphabeta}
2\mathfrak{C}(\lambda_2,s)> \mathfrak{C}(\lambda_1,s)>  \mathfrak{C}(\lambda_2,s),
\end{equation}
\item[$ii)$] or $\beta \ge 2 $ and
\begin{equation}\label{lamdasalphabeta2}
2\mathfrak{C}(\lambda_1,s)> \mathfrak{C}(\lambda_2,s)>  \mathfrak{C}(\lambda_1,s),
\end{equation}
\end{itemize}
then there exists $\tilde{\nu}>0$ such that for $0<\nu\le \tilde{\nu}$, the system \eqref{system:alphabeta} admits a bound state given as a Mountain--Pass-type critical point.
\end{theorem}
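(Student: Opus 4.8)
The plan is to realize the bound state as a mountain--pass critical point of $\mathcal{J}_\nu$ constrained to the Nehari manifold $\mathcal{N}_\nu$ of the system. I treat case $i)$ only; case $ii)$ follows by exchanging $(u,\alpha,\lambda_1)\leftrightarrow(v,\beta,\lambda_2)$ (which turns \eqref{lamdasalphabeta} into \eqref{lamdasalphabeta2}). Since $\mathfrak{C}(\cdot,s)$ is strictly decreasing, \eqref{lamdasalphabeta} forces $\lambda_1<\lambda_2$, so $(0,z^{(2)}_\mu)$ is the cheaper of the two semi--trivial solutions; because $\alpha\ge 2$ (and $\nu$ small if $\alpha=2$), Theorem~\ref{thmsemitrivialalphabeta} tells us it is a strict local minimum of $\mathcal{J}_\nu$ on $\mathcal{N}_\nu$, whereas $(z^{(1)}_\mu,0)$ sits at the strictly larger level $\mathfrak{C}(\lambda_1,s)$. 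Working with the even functional restricted to the cone of nonnegative pairs --- legitimate since $\mathcal{J}_\nu(|u|,|v|)=\mathcal{J}_\nu(u,v)$ --- I would set $c_\nu=\inf_{\gamma\in\Gamma}\max_{t\in[0,1]}\mathcal{J}_\nu(\gamma(t))$, where $\Gamma$ consists of paths in $\mathcal{N}_\nu$ joining $(0,z^{(2)}_{\mu_1})$ to $(z^{(1)}_{\mu_2},0)$ for suitable scales $\mu_1,\mu_2$.

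The next step is to establish the mountain--pass geometry together with the two--sided bound $2\mathfrak{C}(\lambda_2,s)<c_\nu<\mathfrak{C}(\lambda_1,s)+\mathfrak{C}(\lambda_2,s)$ for $\nu$ small. The inequality $c_\nu>\max\{\mathfrak{C}(\lambda_1,s),\mathfrak{C}(\lambda_2,s)\}=\mathfrak{C}(\lambda_1,s)$ comes from the strict local--minimum property at $(0,z^{(2)}_\mu)$ and from the fact that along any admissible path one component must be switched off while the other is switched on: in the weakly coupled regime this forces the energy past the level $\mathfrak{C}(\lambda_1,s)+\mathfrak{C}(\lambda_2,s)$ of a pair of decoupled instantons up to a $\nu$--error, so that in fact $c_\nu\to\mathfrak{C}(\lambda_1,s)+\mathfrak{C}(\lambda_2,s)$ as $\nu\to0^+$; the upper bound follows by testing $c_\nu$ on the explicit path built from two instantons at a common scale whose supports overlap where $h>0$, the strictly negative coupling term $-\nu\int_{\mathbb{R}^N} h(x)|u|^\alpha|v|^\beta|x|^{-s}\,dx$ lowering the maximum strictly below $\mathfrak{C}(\lambda_1,s)+\mathfrak{C}(\lambda_2,s)$ for every $\nu>0$. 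Combining these with \eqref{lamdasalphabeta} --- which guarantees both that the interval $\big(2\mathfrak{C}(\lambda_2,s),\,\mathfrak{C}(\lambda_1,s)+\mathfrak{C}(\lambda_2,s)\big)$ is nonempty and that it contains no bubble--tower energy $k_1\mathfrak{C}(\lambda_1,s)+k_2\mathfrak{C}(\lambda_2,s)$ --- pins $c_\nu$ into that interval for all sufficiently small $\nu$; this is exactly where \eqref{lamdasalphabeta} is used. The model here is \cite{CoLSOr}.

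Then I would invoke the quantized Palais--Smale analysis of Section~\ref{section:PS}. In the subcritical regime $\alpha+\beta<2^*_s$ the coupling term is compact; in the critical regime $\alpha+\beta=2^*_s$, hypothesis~\eqref{H} ensures that the coupling term vanishes under the rescalings responsible for the loss of compactness, while \eqref{H1} is already in force. In either case a $(PS)_{c_\nu}$ sequence decomposes, via the concentration--compactness principle \cite{Lions1,Lions2}, into a weak limit that solves \eqref{system:alphabeta} plus finitely many bubbles, each a rescaled Kang--Peng instanton of one of the two equations and hence carrying energy at least $\mathfrak{C}(\lambda_2,s)$; since for $\nu$ small every nontrivial solution of \eqref{system:alphabeta} has energy at least $\mathfrak{C}(\lambda_2,s)$ by Theorem~\ref{thm:groundstatesalphabeta} and $c_\nu\in\big(2\mathfrak{C}(\lambda_2,s),\mathfrak{C}(\lambda_1,s)+\mathfrak{C}(\lambda_2,s)\big)$, no decomposition with a bubble present is compatible with the level $c_\nu$, whence the sequence converges strongly. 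The Mountain Pass theorem then yields a critical point $(\tilde u,\tilde v)\ge 0$ of $\mathcal{J}_\nu$ with $\mathcal{J}_\nu(\tilde u,\tilde v)=c_\nu$. Finally, $c_\nu>\mathfrak{C}(\lambda_1,s)>\mathfrak{C}(\lambda_2,s)$ excludes $(\tilde u,\tilde v)$ from being semi--trivial --- here one uses that the only positive solutions of the single Hardy--Sobolev equation are the rescaled instantons, at energy exactly $\mathfrak{C}(\lambda_j,s)$, cf.~\cite{KangPeng} --- so $\tilde u\not\equiv 0$ and $\tilde v\not\equiv 0$, and the strong maximum principle together with the Harnack inequality for $-\Delta-\lambda_j|x|^{-2}$ on $\mathbb{R}^N\setminus\{0\}$ upgrades $\tilde u,\tilde v\ge 0$ to $\tilde u,\tilde v>0$ there, producing the desired bound state.

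I expect the main obstacle to be the sharp two--sided estimate for $c_\nu$, which must hold uniformly for $\nu$ in a small interval and is precisely what forces the separation condition~\eqref{lamdasalphabeta}, together with the verification that $(PS)_{c_\nu}$ holds at that level. The difficulty is compounded by the fact that the semi--trivial solutions are not isolated critical points but form non--compact scaling orbits: the local--minimum classification of Theorem~\ref{thmsemitrivialalphabeta} is what makes the mountain--pass geometry available at all, and in the critical case hypothesis~\eqref{H} is indispensable to keep the coupling term from surviving in the rescaling limits and thereby destroying the quantization of the energy levels.
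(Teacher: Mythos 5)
Your overall strategy coincides with the paper's: set up a Mountain--Pass level for paths on the Nehari manifold joining the two semi-trivial solutions, trap that level strictly between $\max\{\mathfrak{C}(\lambda_1,s),\mathfrak{C}(\lambda_2,s)\}$ and $\mathfrak{C}(\lambda_1,s)+\mathfrak{C}(\lambda_2,s)$ while avoiding the quantized levels, invoke the PS lemmas of Section~\ref{section:PS}, and finish with the maximum principle. However, the crucial quantitative step --- the strict lower bound $c_\nu>2\mathfrak{C}(\lambda_2,s)>\mathfrak{C}(\lambda_1,s)$, uniform for $0<\nu\le\tilde\nu$ --- is only asserted heuristically ("one component must be switched off \dots up to a $\nu$-error", "$c_\nu\to\mathfrak{C}(\lambda_1,s)+\mathfrak{C}(\lambda_2,s)$"), and you yourself flag it as the main obstacle. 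This is precisely where the paper does real work: for an arbitrary path $\psi\in\Psi_\nu$ one sets $\sigma_j(t)=\int_{\mathbb{R}^N}(\psi_j^+(t))^{2^*_s}|x|^{-s}dx$, uses continuity to find $\tilde t$ with $\sigma_1(\tilde t)=\sigma_2(\tilde t)=\tilde\sigma$, combines the Nehari identity \eqref{Nnueqp} with \eqref{H-S_lambda} and H\"older to get $\bigl(\mathcal{S}(\lambda_1,s)+\mathcal{S}(\lambda_2,s)\bigr)\tilde\sigma^{2/2^*_s}\le 2\tilde\sigma+\nu(\alpha+\beta)\|h\|_{L^\infty}\tilde\sigma^{(\alpha+\beta)/2^*_s}$, and then applies Lemma~\ref{algelemma} to obtain $\tilde\sigma>\bigl[\tfrac{\mathcal{S}(\lambda_1,s)+\mathcal{S}(\lambda_2,s)}{2}\bigr]^{\frac{N-s}{2-s}}$ uniformly in small $\nu$, which yields $\max_t\mathcal{J}^+_\nu(\psi(t))>2\mathfrak{C}(\lambda_2,s)$; only then does \eqref{lamdasalphabeta} turn this into the Mountain--Pass geometry. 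Your proposal contains no mechanism replacing this (and, incidentally, the phrase that the energy is forced "past $\mathfrak{C}(\lambda_1,s)+\mathfrak{C}(\lambda_2,s)$" is inconsistent with your own strict upper bound). The upper bound also needs the explicit construction: the path $\bigl((1-t)^{1/2}z_1^{(1)},t^{1/2}z_1^{(2)}\bigr)$ projected onto $\mathcal{N}_\nu^+$, whose constrained energy is compared with the function $g(t)$ maximized at $t=\tfrac12$ with value $\mathfrak{C}(\lambda_1,s)+\mathfrak{C}(\lambda_2,s)$.

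A second, smaller gap is your compactness step. The bookkeeping "weak limit solves the system with energy $\ge\mathfrak{C}(\lambda_2,s)$, each bubble costs $\ge\mathfrak{C}(\lambda_2,s)$, hence no bubble fits below $\mathfrak{C}(\lambda_1,s)+\mathfrak{C}(\lambda_2,s)$" does not close: a fully nontrivial weak limit of energy in $\bigl(\mathfrak{C}(\lambda_2,s),\mathfrak{C}(\lambda_1,s)\bigr)$ plus a single $v$-bubble of energy $\mathfrak{C}(\lambda_2,s)$ is compatible with a level in your interval $\bigl(2\mathfrak{C}(\lambda_2,s),\mathfrak{C}(\lambda_1,s)+\mathfrak{C}(\lambda_2,s)\bigr)$. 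Ruling this out is exactly the content of Lemma~\ref{lemmaPS1} (and Lemma~\ref{lemcritic} in the critical case), whose proof uses $\alpha\ge2$ together with Lemma~\ref{algelemma} to show that any limit with $\tilde v\not\equiv0$ must carry almost the full instanton mass $[\mathcal{S}(\lambda_1,s)]^{\frac{N-s}{2-s}}$ in its first component, plus the identification $\tilde c_\nu=\mathfrak{C}(\lambda_2,s)$ from Theorem~\ref{thm:groundstatesalphabeta} and the scalar quantization of Li--Guo--Niu. Since those lemmas are already proved in the paper, you may simply cite them once you have established the strict two-sided bound on $c_\nu$ (which does verify their hypotheses, including $c_\nu\ne\ell\,\mathfrak{C}(\lambda_2,s)$); but as written your sketch neither proves the bound nor correctly reproduces the exclusion argument.
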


\begin{remark}
It is immediate using the definition of $\mathfrak{C}(\lambda,s)$, see \eqref{critical_levels}, that \eqref{lamdasalphabeta} is equivalent to the condition
$$
\lambda_2>\lambda_1 \qquad \mbox{ and } \qquad  \dfrac{\Lambda_N-\lambda_2}{\Lambda_N-\lambda_1}>2^{-\frac{2(2-s)}{2(N-1)-s}},
$$
whereas \eqref{lamdasalphabeta2} can be rewritten in an analogous way.
\end{remark}

A key step in our approach is the careful application of Lemma~\ref{algelemma} below to the mass of the component whose exponent is greater than $2$. Precisely, this result allows to derive lower bounds on integral terms and, consequently, to prove that the critical mass of the underlying component must not vanish. For that reason some of our existence results deal with $\max \{\alpha,\beta\}\ge 2$. The proofs of the above theorems are deferred to Section \ref{section:main}.
\section{Preliminaries and Functional setting}\label{section2}
In this section, we introduce the appropriate variational setting for the system \eqref{system:alphabeta}. The problem \eqref{system:alphabeta} is the Euler--Lagrange system for the energy functional $\mathcal{J}_\nu$ (see \eqref{functalphabeta}) which is correctly defined in the product space $\mathbb{D}=\mathcal{D}^{1,2} (\mathbb{R}^N)\times \mathcal{D}^{1,2} (\mathbb{R}^N)$. The energy space $\mathbb{D}$ is equipped with the norm
\begin{equation*}
\|(u,v)\|^2_{\mathbb{D}}=\|u\|^2_{\lambda_1}+\|v\|^2_{\lambda_2},
\end{equation*}
where
\begin{equation*}
\|u\|^2_{\lambda}=\int_{\mathbb{R}^N} |\nabla u|^2 \, dx - \lambda \int_{\mathbb{R}^N} \frac{u^2}{|x|^2} \, dx.
\end{equation*}
Because of the Hardy's inequality,
\begin{equation}\label{hardy_inequality}
\Lambda_N \int_{\R^N} \frac{u^2}{|x|^2} \, dx \leq \int_{\R^N} |\nabla u|^2 \, dx,
\end{equation}
the norm $\|\cdot\|_{\lambda}$ is equivalent to $\|\cdot\|_{\mathcal{D}^{1,2} (\mathbb{R}^N)}$ for any $\lambda\in (0,\Lambda_N)$, where $\Lambda_N=\frac{(N-2)^2}{4}$ is the best constant in the Hardy inequality.

\

As commented before, the equations arising from the decoupled system, namely $\nu=0$, are well known nowadays. In particular, if either the system is decoupled, i.e. $\nu=0$, or some component vanishes, $u$ or $v$ satisfies the entire equation
\begin{equation}\label{entire}
-\Delta z - \lambda_j \frac{z}{|x|^2}=\frac{z^{2^*_s-1}}{|x|^{s}} \qquad\mbox{ and }   \qquad z>0\ \mbox{ in } \mathbb{R}^N \setminus \{0\}.
\end{equation}
A complete classification of \eqref{entire} is given in \cite{KangPeng} where it is proved that, if $\lambda\in\left(0,\Lambda_N\right)$, the solutions of \eqref{entire} are given by
\begin{equation}\label{zeta}
z_{\mu}^{(j)}(x)= \mu^{-\frac{N-2}{2}}z_1^{\lambda_j,s}\left(\frac{x}{\mu}\right) \qquad \mbox{ with } \qquad z_1^{\lambda,s}(x)=\dfrac{A(N,\lambda)^{\frac{N-2}{2(2-s)}}}{|x|^{a_{\lambda}}\left(1+|x|^{(2-s)(1-\frac{2}{N-2}a_{\lambda})}\right)^{\frac{N-2}{2-s}}},
\end{equation}
where $\displaystyle A(N,\lambda)=2(\Lambda_N-\lambda)\frac{N-s}{\sqrt{\Lambda_N}}$,  $a_\lambda=\sqrt{\Lambda_N}-\sqrt{\Lambda_N-\lambda}$ and $\mu>0$ is a scaling factor. Solutions of \eqref{entire} arise as minimizers of the underlying Rayleigh quotient
\begin{equation*}
\mathcal{S}(\lambda,s)= \inf_{\substack{u\in \mathcal{D}^{1,2}(\mathbb{R}^N)\\
u\not\equiv0}}\frac{\|u\|^2_{\lambda}}{\left(\displaystyle\int_{\mathbb{R}^N}\frac{u^{2^*_s}}{|x|^s}dx\right)^{\frac{2}{2_s^*}}}= \frac{\|z_\mu^{\lambda,s}\|^2_{\lambda}}{\left(\displaystyle\int_{\mathbb{R}^N}\frac{(z_{\mu}^{\lambda,s})^{2^*_s}}{|x|^s}dx\right)^{\frac{2}{2_s^*}}}.
\end{equation*}
Actually, it holds that
\begin{equation}\label{normcrit}
\displaystyle  \|z_\mu^\lambda\|_{\lambda}^{2} = \displaystyle\int_{\mathbb{R}^N}\frac{(z_{\mu}^{\lambda,s})^{2^*_s}}{|x|^s}dx=[ \mathcal{S}(\lambda,s)]^{\frac{N-s}{2-s}}.
\end{equation}
Taking $u(x)=|x|^{a_\lambda}z_1^{\lambda,s}$, the equation \eqref{entire} becomes (in a weak sense) 
\begin{equation}\label{transformada}
-\text{div}(|x|^{-2a_\lambda}\nabla u)=\frac{u^{2_s^*-1}}{|x|^{ 2_s^*a_\lambda+s}}.
\end{equation}
Because of  \cite[Theorem A]{ChouChu} with $\beta_\lambda=-2 a_\lambda$ and $\alpha_{\lambda,s}=-(2_s^*a_\lambda+s)$, and noticing that $2_s^*=\dfrac{2(N+\alpha_{\lambda,s})}{N+\beta_\lambda-2}$, we get   
\begin{equation}\label{Slambda}
\mathcal{S}(\lambda,s)=4(\Lambda_N-\lambda)\frac{N-s}{N-2}\left(\frac{N-2}{2(2-s)\sqrt{\Lambda_N-\lambda}}\frac{2\pi^{\frac{N}{2}}}{\Gamma\left(\frac{N}{2}\right)}\frac{\Gamma^2\left(\frac{N-s}{2-s}\right)}{\Gamma\left(\frac{2(N-s)}{2-s}\right)}\right)^{\frac{2-s}{N-s}}.
\end{equation}
Observe that the constant $\mathcal{S}(\lambda,s)$ is decreasing in both $\lambda$ and $s$, so that $\mathcal{S}(0,0)\ge\mathcal{S}(\lambda,s)$. By definition, $\mathcal{S}(\lambda,s)$ is the best constant for the inequality 
\begin{equation}\label{H-S_lambda}
\mathcal{S}(\lambda,s)\left(\int_{\R^N} \frac{u^{2_s^*}}{|x|^s} \, dx \right)^{\frac{2}{2_s^*}} \leq\int_{\mathbb{R}^N} |\nabla u|^2 \, dx - \lambda \int_{\mathbb{R}^N} \frac{u^2}{|x|^2} \, dx.
\end{equation}
Moreover, by \eqref{transformada}, the constant $\mathcal{S}(\lambda,s)$ turns out to be also the best constant in the following Caffarelli--Kohn--Nirenberg-type inequality,
\begin{equation*}
\mathcal{S}(\lambda,s) \left(\int_{\R^N} |x|^{\alpha_{\lambda,s}}u^{2_s^*} \, dx \right)^{\frac{2}{2_s^*}}\leq \int_{\R^N} |x|^{\beta_{\lambda}}|\nabla u|^2 \, dx.
\end{equation*}
Since $\beta_0=\alpha_{0,0}=0$, we have $\mathcal{S}(0,0)=\mathcal{S}$, where $\mathcal{S}=\pi N(N-2)\left(\frac{\Gamma\left(\frac{N}{2}\right)}{\Gamma(N)}\right)^{\frac{2}{N}}$ is the best constant in Sobolev's inequality,
\begin{equation}\label{sobolev_inequality}
\mathcal{S}\left(\int_{\R^N} |u|^{2^*}dx\right)^{\frac{2}{2^*}} \leq \int_{\R^N} |\nabla u|^2dx.
\end{equation}
Since $\alpha_{\lambda,0}=-\frac{N}{\sqrt{\Lambda_N}}a_\lambda$, we have $\mathcal{S}(\lambda,0)=\left(1-\frac{\lambda}{\Lambda_N} \right)^{\frac{N-1}{N}}\mathcal{S}$. 
Finally, as $\beta_{0}=0$ and $\alpha_{0,s}=-s$,
\begin{equation*}
\mathcal{S}(0,s)=[(N-s)(N-2)]\left(\frac{1}{2-s}\frac{2\pi^{\frac{N}{2}}}{\Gamma\left(\frac{N}{2}\right)}\frac{\Gamma^2\left(\frac{N-s}{2-s}\right)}{\Gamma\left(\frac{2(N-s)}{2-s}\right)}\right)^{\frac{2-s}{N-s}}.
\end{equation*}
The constant $\mathcal{S}(0,s)$ is the best constant in the Hardy--Sobolev inequality,
\begin{equation}\label{hardy_sobolev_inequality}
\mathcal{S}(0,s) \left(\int_{\R^N} \frac{u^{2_s^*}}{|x|^s} \, dx \right)^{\frac{2}{2_s^*}}\leq \int_{\R^N} |\nabla u|^2 \, dx.
\end{equation}
In addition, one can see that (cf. \cite{ChouChu}), $\displaystyle\lim\limits_{s\to2^-}\mathcal{S}(0,s)=\Lambda_N$. 

For any $\mu>0$, the pairs $(z_{\mu}^{(1)},0)$ and $(0,z_{\mu}^{(2)})$ satisfying \eqref{system:alphabeta} will be referred to as \textit{semi-trivial} solutions. Our main aim is to look for solutions neither \textit{semi-trivial} nor trivial solutions, i.e., solutions $(u,v)$ such that $u\not\equiv 0$ and $v\not\equiv 0$ in $\mathbb{R}^N$.
\begin{definition}
We said that $(u,v)\in\mathbb{D}\setminus \{(0,0)\}$ is a non-trivial bound state for \eqref{system:alphabeta} if it is a non-trivial critical point of $\mathcal{J}_\nu$. 
A non-trivial and non-negative bound state $(\tilde{u},\tilde{v})$ is said to be a ground state if its energy is minimal, namely
\begin{equation}\label{ctilde}
\tilde{c}_\nu\vcentcolon=\mathcal{J}_\nu(\tilde{u},\tilde{v})=\min\left\{\mathcal{J}_\nu(u,v): (u,v)\in \mathbb{D}\setminus \{(0,0)\},\; u,v\ge 0 \mbox{ and } \mathcal{J}_\nu'(u,v)=0\right\}.
\end{equation}
\end{definition}
Note that the functional $\mathcal{J}_\nu$ is of class $C^1(\mathbb{D},\mathbb{R})$ and it is not bounded from below. Indeed,
\begin{equation*}
\mathcal{J}_\nu(t \tilde u,t \tilde v) \to -\infty \quad \mbox{ as } t\to\infty,
\end{equation*}
for $(\tilde u,\tilde v)\in \mathbb{D}\setminus\{(0,0)\}$. Next, we introduce a suitable constraint to minimize $\mathcal{J}_\nu$. Let us set 
\begin{equation*}
\begin{split}
\Psi(u,v)&=\left\langle \mathcal{J}'_\nu(u,v){\big|}(u,v)\right\rangle\\
&=\|(u,v)\|_\mathbb{D}^2-\int_{\mathbb{R}^N}\frac{|u|^{2^*_{s}}}{|x|^{s}}dx - \int_{\mathbb{R}^N}\frac{|v|^{2^*_{s}}}{|x|^{s}} dx -\nu (\alpha+\beta) \int_{\mathbb{R}^N} \frac{|u|^{\alpha} |v|^{\beta}}{|x|^s} \, dx,
\end{split}
\end{equation*}
and define the Nehari manifold associated to $\mathcal{J}_\nu$ as
\begin{equation*}
\mathcal{N}_\nu=\left\{ (u,v) \in \mathbb{D} \setminus \{(0,0)\} \, : \, \Psi(u,v) =0 \right\}.
\end{equation*}
Plainly, $\mathcal{N}_\nu$ contains all the non-trivial critical points of $\mathcal{J}_\nu$ in $\mathbb{D}$. Let us now recall some properties on $\mathcal{N}_\nu$ that will be of use throughout this work. Any $(u,v) \in \mathcal{N}_\nu$ satisfies
\begin{equation} \label{Nnueq1}
 \|(u,v)\|_\mathbb{D}^2=\int_{\mathbb{R}^N}\frac{|u|^{2^*_{s}}}{|x|^{s}}dx + \int_{\mathbb{R}^N}\frac{|v|^{2^*_{s}}}{|x|^{s}} dx +\nu (\alpha+\beta) \int_{\mathbb{R}^N} h \frac{|u|^{\alpha} |v|^{\beta}}{|x|^s} \, dx,
\end{equation}
so we can write the energy functional constrained to $\mathcal{N}_\nu$ as
\begin{equation}\label{Nnueq2}
\mathcal{J}_{\nu}{\big|}_{\mathcal{N}_\nu} (u,v) = \frac{2-s}{2(N-s)} \left(\int_{\mathbb{R}^N}  \frac{|u|^{2^*_{s}}}{|x|^{s}}+\frac{|v|^{2^*_{s}}}{|x|^{s}}dx\right) +\nu \left( \frac{\alpha+\beta-2}{2} \right)\int_{\mathbb{R}^N} h \frac{|u|^{\alpha} |v|^{\beta}}{|x|^s} dx.
\end{equation}
Given $(u,v)\in\mathbb{D}\setminus \{(0,0)\}$, there exists a unique value $t=t_{(u,v)}$ such that $(tu,tv) \in \mathcal{N}_\nu$. Indeed, $t$ is the unique solution to the algebraic equation
\begin{equation}\label{normH}
\|(u,v)\|_\mathbb{D}^2=\ t^{2_{s}^*-2}\left(\int_{\mathbb{R}^N} \frac{|u|^{2^*_{s}}}{|x|^{s}}dx + \int_{\mathbb{R}^N} \frac{|v|^{2^*_{s}}}{|x|^{s}} dx\right)+ \nu (\alpha+\beta) \, t^{\alpha+\beta-2} \int_{\mathbb{R}^N} h \frac{ |u|^{\alpha} |v|^{\beta}}{|x|^s}dx.
\end{equation}
Using \eqref{Nnueq1} together with \eqref{alphabeta} we find that, for any $(u,v) \in \mathcal{N}_\nu$,
\begin{equation}\label{criticalpoint1}
\begin{split}
\mathcal{J}_\nu''(u,v)[u,v]^2=&\ (2-\alpha-\beta)\|(u,v)\|_{\mathbb{D}}^2 \\
&+ (\alpha+\beta-2_{s}^*)\left(\int_{\mathbb{R}^N}  \frac{|u|^{2^*_{s}}}{|x|^{s}}dx + \int_{\mathbb{R}^N}  \frac{|v|^{2^*_{s}}}{|x|^{s}} dx\right)< 0.
\end{split}
\end{equation}
Moreover, $(0,0)$ is a strict minimum since, for the second variation of the energy functional, 
\begin{equation*}
 \mathcal{J}_\nu''(0,0)[\varphi_1,\varphi_2]^2=\|(\varphi_1,\varphi_2)\|^2_{\mathbb{D}}>0 \quad \text{ for any } (\varphi_1,\varphi_2)\in \mathcal{N}_\nu.
\end{equation*}
Hence, $(0,0)$ is an isolated point respect to $\displaystyle \mathcal{N}_\nu  \, \cup \, \{(0,0)\}$. As a consequence, $\mathcal{N}_\nu$ is a smooth complete manifold of codimension $1$. Also, there exists  $r_\nu>0$ such that
\begin{equation}\label{criticalpoint2}
\|(u,v)\|_{\mathbb{D}} > r_\nu\quad\text{for all } (u,v)\in \mathcal{N}_\nu.
\end{equation}

Given $(u,v) \in \mathbb{D}$ a critical point of $\mathcal{J}_{\nu}{\big|}_{\mathcal{N}_\nu}$, there exists a Lagrange multiplier $\omega$ such that
\begin{equation*}
(\mathcal{J}_{\nu}{\big|}_{\mathcal{N}_\nu})'(u,v)=\mathcal{J}'_\nu(u,v)-\omega \Psi'(u,v)=0.
\end{equation*}
Then, it follows that $\left\langle \mathcal{J}'_\nu(u,v){\big|}(u,v)\right\rangle = \omega \mathcal{J}_\nu''(u,v)[u,v]^2 $. By \eqref{criticalpoint1}, we deduce that $\omega=0$ and $\mathcal{J}'_\nu(u,v)=0$. Consequently, $\mathcal{N}_\nu$ is a called a natural constraint in the sense that
\vspace{0.15cm}
\begin{center}
$(u,v) \in \mathbb{D}$ is a critical point of $\mathcal{J}_\nu$ $\quad\Leftrightarrow\quad$ $(u,v) \in \mathbb{D}$ is a critical point of $\mathcal{J}_{\nu}{\big|}_{\mathcal{N}_\nu}$.
\end{center}
\vspace{0.15cm}
Let us also stress that, on the Nehari manifold $\mathcal{N}_\nu$,
\begin{equation}\label{Nnueq}
\begin{split}
\mathcal{J}_{\nu}{\big|}_{\mathcal{N}_\nu} (u,v) =& \left( \frac{1}{2}-\frac{1}{\alpha+\beta} \right) \|(u,v)\|^2_{\mathbb{D}} \\
&+ \left( \frac{1}{\alpha+\beta}-\frac{1}{2^*_{s}} \right)\left(\int_{\mathbb{R}^N} \frac{|u|^{2^*_{s}}}{|x|^{s}}dx+\int_{\mathbb{R}^N} \frac{|v|^{2^*_{s}}}{|x|^{s}} dx\right).
\end{split}
\end{equation}
From \eqref{criticalpoint2} and hypotheses \eqref{alphabeta} we have
\begin{equation*}
\mathcal{J}_{\nu} (u,v) > \left( \frac{1}{2}-\frac{1}{\alpha+\beta} \right) r^2_\nu\quad  \quad\text{for all } (u,v)\in \mathcal{N}_\nu.
\end{equation*}
Thus, $\mathcal{J}_{\nu}$ remains bounded from below on $\mathcal{N}_\nu$ and, hence, we can find solutions of \eqref{system:alphabeta} as minimizers of $\mathcal{J}_{\nu}{\big|}_{\mathcal{N}_\nu}$.
\subsection{Semi-trivial solutions}\label{subsection:semitrivials}\hfill\newline
Let us consider the decoupled energy functionals $\mathcal{J}_j:\mathcal{D}^{1,2} (\mathbb{R}^N)\mapsto\mathbb{R}$,
\begin{equation}\label{funct:Ji}
\mathcal{J}_j(u) =\frac{1}{2} \int_{\mathbb{R}^N}  |\nabla u|^2 \, dx -\frac{\lambda_j}{2} \int_{\mathbb{R}^N} \dfrac{u^2}{|x|^2}  \, dx - \frac{1}{2^*_{s}} \int_{\mathbb{R}^N} \frac{|u|^{2^*_{s}}}{|x|^{s}} \, dx.
\end{equation}
Note that
\begin{equation*}
\mathcal{J}_\nu(u,v)=\mathcal{J}_1(u)+\mathcal{J}_2(v)-\nu \int_{\mathbb{R}^N} h(x) \frac{|u|^{\alpha} |v|^{\beta}}{|x|^{s}} \, dx.
\end{equation*}
The function $z_{\mu}^{(j)}$, defined in \eqref{zeta}, is a global minimum of $\mathcal{J}_j$ on the Nehari manifold
\begin{equation}\label{Neharij}
\begin{split}
\mathcal{N}_j&= \left\{ u \in \mathcal{D}^{1,2} (\mathbb{R}^N) \setminus \{0\} \, : \,  \left\langle \mathcal{J}'_j(u){\big|} u\right\rangle=0 \right\}\\
&= \left\{ u \in\mathcal{D}^{1,2} (\mathbb{R}^N) \setminus \{0\} \, : \,  \|u\|_{\lambda_j}=\int_{\mathbb{R}^N} \frac{|u|^{2^*_{s}}}{|x|^{s}} \, dx \right\}.
\end{split}
\end{equation}
By using \eqref{normcrit}, one can compute the energy levels of $z_\mu^{(j)}$, namely,  for any $\mu>0$ we have
\begin{equation}\label{critical_levels}
\mathfrak{C}(\lambda_j,s)\vcentcolon=\mathcal{J}_j(z_\mu^{(j)})=\dfrac{2-s}{2(N-s)}\left[\mathcal{S}(\lambda_j,s)\right]^{\frac{N-s}{2-s}}.
\end{equation}
Then, the energy levels of the \textit{semi-trivial} solutions are given by
\begin{equation}\label{Jzeta}
\mathcal{J}_\nu(z_\mu^{(1)},0)=\mathfrak{C}(\lambda_1,s) \qquad\text{and}\qquad \mathcal{J}_\nu(0,z_\mu^{(2)})=\mathfrak{C}(\lambda_2,s).
\end{equation}
Let us remark that, since $\mathcal{S}(\lambda,s)$ is decreasing in both $\lambda$ and $s$, we have 
\begin{equation}\label{decreasing}
\mathfrak{C}(0,0)\ge\mathfrak{C}(\lambda,s)\qquad\text{for }\lambda\in(0,\Lambda_N)\ \text{and }s\in(0,2).
\end{equation}

Next, we characterize the variational nature of the \textit{semi-trivial} couples on $\mathcal{N}_\nu$. Although the proof follows as that of \cite[Theorem 2.2]{AbFePe} we include it to illustrate the effect of the parameter $s$ in the critical structure of system \eqref{system:alphabeta}.
\begin{theorem}\label{thmsemitrivialalphabeta} Under hypotheses \eqref{alphabeta} and \eqref{H1}, the following holds:
\begin{enumerate}
\item[i)] If $\alpha>2$ or $\alpha=2$ and $\nu$ small enough, then $(0,z_\mu^{(2)})$ is a local minimum of $\mathcal{J}_\nu$ on $\mathcal{N}_\nu$.
\item[ii)] If $\beta>2$ or $\beta=2$ and $\nu$ small enough, then $(z_\mu^{(1)},0)$ is a local minimum of $\mathcal{J}_\nu$ on $\mathcal{N}_\nu$.
\item[iii)] If $\alpha<2$ or $\alpha=2$ and $\nu$ large enough, then $(0,z_\mu^{(2)})$ is a saddle point for $\mathcal{J}_\nu$ on $\mathcal{N}_\nu$.
\item[iv)] If $\beta<2$ or $\beta=2$ and $\nu$ large enough, then $(z_\mu^{(1)},0)$ is a saddle point for $\mathcal{J}_\nu$ on $\mathcal{N}_\nu$.
\end{enumerate}
\end{theorem}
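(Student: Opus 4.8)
The plan is to study the behavior of the constrained functional $\mathcal{J}_\nu|_{\mathcal{N}_\nu}$ near the semi-trivial point, say $(0,z_\mu^{(2)})$; the analysis at $(z_\mu^{(1)},0)$ is symmetric after swapping the roles of $\alpha,\beta$ and $\lambda_1,\lambda_2$. First I would fix a chart: since $\mathcal{N}_\nu$ is a $C^1$ manifold of codimension one, every point near $(0,z_\mu^{(2)})$ can be written as $t_{(u,v)}(u,v)$ for $(u,v)$ in a neighbourhood of $(0,z_\mu^{(2)})$ in $\mathbb{D}$, where $t_{(u,v)}$ is the unique projection scalar determined by \eqref{normH}. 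So it suffices to analyze the map $(u,v)\mapsto \mathcal{J}_\nu\big(t_{(u,v)}u,\,t_{(u,v)}v\big)$ along directions of the form $(\varphi, z_\mu^{(2)}+\psi)$ with $\varphi,\psi$ small. The key point is to expand this to second order and detect the sign of the quadratic form in the $\varphi$-direction (the $\psi$-direction is harmless, since $z_\mu^{(2)}$ is a strict minimum of $\mathcal{J}_2$ on $\mathcal{N}_2$, hence contributes a positive-definite term modulo the tangent constraint).

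The heart of the matter is the cross term $-\nu\int_{\mathbb{R}^N} h(x)\frac{|u|^\alpha|v|^\beta}{|x|^s}dx$ evaluated near $u=\varphi$, $v=z_\mu^{(2)}$: it behaves like $-\nu\,c\int_{\mathbb{R}^N} h(x)\frac{|\varphi|^\alpha (z_\mu^{(2)})^\beta}{|x|^s}dx$ to leading order. When $\alpha>2$, this term is $o(\|\varphi\|^2)$ as $\varphi\to 0$, so the quadratic part of $\mathcal{J}_\nu|_{\mathcal{N}_\nu}$ at $(0,z_\mu^{(2)})$ in the $\varphi$-direction is dominated by $\frac12\|\varphi\|_{\lambda_1}^2$ (after subtracting the projection correction, which one checks is lower order because $\int h\frac{|u|^\alpha|v|^\beta}{|x|^s}$ enters \eqref{normH} with exponent $\alpha+\beta-2>0$), hence positive definite: this gives a local minimum, item i). When $\alpha=2$, the cross term is exactly of order $\|\varphi\|^2$, namely $-2\nu\int_{\mathbb{R}^N} h(x)\frac{\varphi^2 (z_\mu^{(2)})^\beta}{|x|^s}dx$, and one must compare the coefficient of $\int h\frac{\varphi^2 (z_\mu^{(2)})^\beta}{|x|^s}$ against the quadratic form $\|\varphi\|_{\lambda_1}^2$; since $h\gneq0$ and the test function $\varphi=z_\mu^{(1)}$ makes $\int h\frac{\varphi^2 (z_\mu^{(2)})^\beta}{|x|^s}$ strictly positive, for $\nu$ small the form stays positive definite (local minimum), and for $\nu$ large it becomes negative on that direction (saddle). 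When $\alpha<2$, the cross term $-\nu c \int h\frac{|\varphi|^\alpha (z_\mu^{(2)})^\beta}{|x|^s}$ dominates $\|\varphi\|^2$ as $\varphi\to 0$ with a definite sign, so along any admissible direction with $\int h\frac{|\varphi|^\alpha (z_\mu^{(2)})^\beta}{|x|^s}>0$ the functional decreases, while along directions where this integral vanishes (or into the $z_\mu^{(2)}+\psi$ component) it increases — giving a saddle, items iii)--iv).

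Concretely, the steps are: (1) write the constrained functional via the projection $t_{(u,v)}$ and compute, using \eqref{Nnueq} or \eqref{Nnueq2}, its restriction along the two families $s\mapsto(s\varphi, z_\mu^{(2)})$ projected onto $\mathcal{N}_\nu$, and $s\mapsto(0, z_\mu^{(2)}+s\psi)$ projected onto $\mathcal{N}_\nu$; (2) Taylor expand in $s$, keeping track of the implicit derivative of $t_{(u,v)}$ from \eqref{normH}; (3) in the $\psi$-direction use that $z_\mu^{(2)}$ minimizes $\mathcal{J}_2$ on $\mathcal{N}_2$ to get a nonnegative contribution and, combined with \eqref{criticalpoint1}, a strictly positive one on the relevant subspace; (4) in the $\varphi$-direction, split into the three cases $\alpha>2$, $\alpha=2$, $\alpha<2$ and read off the sign as above, choosing $\varphi=z_\mu^{(1)}$ (or any nonnegative bump where $h>0$) as the explicit destabilizing/test direction when needed; (5) assemble: a positive-definite second variation on all of $T_{(0,z_\mu^{(2)})}\mathcal{N}_\nu$ gives a local minimum, while a direction of strict decrease coexisting with a direction of strict increase gives a saddle. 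The main obstacle I anticipate is the bookkeeping in step (2): correctly isolating the contribution of the derivative of the Nehari projection $t_{(u,v)}$ so as to be sure that in the case $\alpha\ge 2$ it does not spoil the sign — here one uses crucially that the coupling integral enters \eqref{normH} with the exponent $\alpha+\beta-2$, which is $\ge\alpha-1>1$ when $\beta>1$, hence strictly higher order than the quadratic terms driving the expansion. A secondary subtlety, in the critical case $\alpha+\beta=2^*_s$, is ensuring the coupling integral $\int h\frac{|u|^\alpha|v|^\beta}{|x|^s}$ is finite and continuous near $(0,z_\mu^{(2)})$, which is exactly what hypothesis \eqref{H} (vanishing of $h$ at $0$ and $\infty$) is designed to supply.
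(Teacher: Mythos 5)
Your proposal is correct and follows essentially the same route as the paper's proof: perturbations $(\varphi,z_\mu^{(2)}+\psi)$ on the Nehari manifold with the H\"older bound making the coupling term $O(\|\varphi\|_{\lambda_1}^{\alpha})$, the case split $\alpha>2$, $\alpha=2$, $\alpha<2$, the minimality of $z_\mu^{(2)}$ for $\mathcal{J}_2$ on $\mathcal{N}_2$ handling the $\psi$-direction, and a projected curve $(f(t)t\varphi,f(t)z_\mu^{(2)})$ producing the strict-decrease direction in the saddle cases. The only cosmetic difference is that the paper carries this out via explicit expansions of the projection scalars $\overline{t}$ and $f(t)$ rather than phrasing it as a second-variation/quadratic-form argument, but the underlying computation is the same.
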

\begin{proof}
Note that $i)$ and $ii)$ can be proved in the same way, so let us focus on $i)$. In order to prove that $(0,z_\mu^{(2)})$ is a local minimum let us introduce a new couple $(\varphi,z_\mu^{2}+\psi)\in\mathcal{N}_\nu$ with $\mu>0$ and $\alpha>2$. We shall prove that the energy level of such perturbation is bigger than the semitrivial one. First, by \eqref{Nnueq1}, we have 
\begin{equation}\label{Nnueq1bis}
\begin{split}
\|(\varphi,z_\mu^{2}+\psi)\|^2_{\mathbb{D}}=&\int_{\mathbb{R}^N} \frac{|\varphi|^{2^*_{s}}}{|x|^{s}} + \int_{\mathbb{R}^N}\frac{|z_\mu^{(2)}+\psi|^{2^*_{s}}}{|x|^{s}} dx + (\alpha+\beta)\nu \int_{\mathbb{R}^N} h \frac{|\varphi|^{\alpha} |z_\mu^{(2)}+\psi|^{\beta}}{|x|^s} dx.
\end{split}
\end{equation}
Next, take $\overline{t}$ such that $\overline{t}(z_\mu^{(2)}+\psi)\in \mathcal{N}_{2}$, where $\mathcal{N}_{2}$ was defined in \eqref{Neharij}. Actually, by using the definition of $\mathcal{N}_2$ and \eqref{Nnueq1bis}, the value of $t$ is determined by the following expression
{\tiny
\begin{equation}\label{tminmax}
\begin{split}
 \overline{t}&=\left(\dfrac{\|z_\mu^{(2)}+\psi\|^2_{\lambda_2}}{\displaystyle{\int_{\mathbb{R}^N}\frac{|z_\mu^{(2)}+\psi|^{2^*_{s}}}{|x|^{s}}}dx} \right)^{\frac{1}{2^*_{s}-2}}= \left( 1-\dfrac{\|\varphi\|^2_{\lambda_1}-\displaystyle\int_{\mathbb{R}^N} \frac{|\varphi|^{2^*_{s}}}{|x|^{s}} \, dx - (\alpha+\beta)\nu \int_{\mathbb{R}^N} h \frac{|\varphi|^{\alpha} |z_\mu^{(2)}+\psi|^{\beta}}{|x|^s} }{\displaystyle\int_{\mathbb{R}^N}\dfrac{|z_\mu^{(2)}+\psi|^{2^*_{s}}}{|x|^{s}}} \right)^{\frac{1}{2^*_{s}-2}}.
\end{split}
\end{equation}
}
Then, using twice the Holder's inequality with exponents $p=\dfrac{2_{s}^*}{2_{s}^*-\alpha-\beta}$ and  $q=\dfrac{2_{s}^*}{\alpha+\beta}$ and $\tilde p = \dfrac{2_{s}^*}{\alpha}$ and $\tilde{q} = \dfrac{2_{s}^*}{\beta}$ respectively, we have
\begin{equation}\label{Hold1}
\begin{split}
 \int_{\mathbb{R}^N} h(x) \frac{|\varphi|^{\alpha} |z_\mu^{(2)}+\psi|^{\beta}}{|x|^s}dx & \leq \left(\int_{\mathbb{R}^N} \dfrac{h^{\frac{2_{s}^*}{2_{s}^*-\alpha-\beta}}}{|x|^{s}} \right)^{\frac{2_{s}^*-\alpha-\beta}{2_{s}^*}} \left(\int_{\mathbb{R}^N} \dfrac{(|\varphi|^\alpha |z_\mu^{(2)}+\psi|^\beta)^{\frac{2_{s}^*}{\alpha+\beta}}}{|x|^s} \right)^\frac{\alpha+\beta}{2_{s}^*} \\ 
&\leq C(h)  \left( \int_{\mathbb{R}^N} \frac{|\varphi|^{2^*_{s}}}{|x|^{s}} dx\right)^{\frac{\alpha}{2^*_{s}}}  \left( \int_{\mathbb{R}^N} \frac{|z_\mu^{(2)}+\psi|^{2^*_{s} }}{|x|^{s}} dx\right)^{\frac{\beta}{2^*_{s}}},
\end{split}
\end{equation}
where we have used \eqref{H1}. Notice that if $\alpha+\beta=2^*_s$, then $C(h)=||h||_{L^\infty(\mathbb{R}^N)}$.

From \eqref{Hold1} and \eqref{hardy_sobolev_inequality}, we find 
\begin{equation}\label{Hold2}
\int_{\mathbb{R}^N} h\frac{|\varphi|^{\alpha} |z_\mu^{(2)}+\psi|^{\beta}}{|x|^s}dx \leq  C \|\varphi \|^{\alpha}_{\lambda_1}.
\end{equation}
In particular, since $\alpha>2$, from \eqref{tminmax} and \eqref{Hold2}, we infer
\begin{equation}\label{texpan1}
\overline{t}^{2}= 1-\frac{2}{2^*_{s}-2}\frac{\|\varphi \|^{2}_{\lambda_1}(1+o(1))}{\displaystyle\int_{\mathbb{R}^N}\dfrac{|z_\mu^{(2)}+\psi|^{2^*_{s}}}{|x|^{s}}dx}, \qquad \mbox{ as } \|(\varphi,\psi)\|_{\mathbb{D}}\to 0,
\end{equation}
\begin{equation}\label{texpan2}
\overline{t}^{2^*_{s}}= 1-\frac{2^*_{s}}{2^*_{s}-2}\frac{\|\varphi \|^{2}_{\lambda_1}(1+o(1))}{\displaystyle\int_{\mathbb{R}^N}\dfrac{|z_\mu^{(2)}+\psi|^{2^*_{s}}}{|x|^{s}}dx}, \qquad \mbox{ as } \|(\varphi,\psi)\|_{\mathbb{D}}\to 0.
\end{equation}
As the decoupled energy functional $\mathcal{J}_2$ achieves its minimum in $z_\mu^{(2)}$, then
\begin{equation*}
\mathcal{J}_2(\overline{t}(z_\mu^{(2)}+\psi))-\mathcal{J}_2(z_\mu^{(2)})=\mathcal{J}_\nu(0,\overline{t}(z_\mu^{(2)}+\psi))-\mathcal{J}_\nu(0,z_\mu^{(2)}) \ge 0.
\end{equation*}
Next, comparing the energy of $(\varphi,z_\mu^{(2)}+\psi)$ and $(0,\overline{t}(z_\mu^{(2)}+\psi))$, we obtain
\begin{equation}\label{compar}
\begin{split}
 \mathcal{J}_\nu(\varphi,z_\mu^{(2)}+\psi)\!-\!\mathcal{J}_\nu(0,\overline{t}(z_\mu^{(2)}+\psi))\!=&\ \frac{1}{2} \| \varphi\|_{\lambda_1}^2+\frac{1}{2}(1-\overline{t}^2)\|z_\mu^{(2)}+\psi\|_{\lambda_2}^2\\
 &-\!\frac{1}{2_{s}^*}\! \int_{\mathbb{R}^N}\!\dfrac{| \varphi|^{2^*_{s}}}{|x|^{s}}dx+\!\frac{1}{2_{s}^*}(1-\overline{t}^{2^*_{s}})\!\!\! \int_{\mathbb{R}^N}\dfrac{|z_\mu^{(2)}+\psi|^{2^*_{s}}}{|x|^{s}}dx\\
& - \nu \int_{\mathbb{R}^N} h \frac{|\varphi|^{\alpha} |z_\mu^{(2)}+\psi|^{\beta}}{|x|^s}dx \\
=&\ \frac{1}{2} (1+o(1)) \| \varphi\|_{\lambda_1}^2,\qquad \mbox{ as } \|(\varphi,\psi)\|_{\mathbb{D}}\to 0.
\end{split}
\end{equation}
From two previous inequalities we get $\mathcal{J}_\nu(\varphi,z_\mu^{(2)}+\psi)-\mathcal{J}_\nu(0,z_\mu^{(2)}))\ge 0.$
Taking the perturbation $(\varphi,z_\mu^{(2)}+\psi)$ small enough in the $\mathbb{D}$-norm sense, we conclude that $(0,z_\mu^{(2)})$ is a local minimum of $\mathcal{J}_\nu$ on $\mathcal{N}_\nu$. In the case $\alpha=2$, from \eqref{compar} and \eqref{Hold2}, we get
\begin{equation*}
\begin{split}
\mathcal{J}_\nu(\varphi,z_\mu^{(2)}+\psi)-\mathcal{J}_\nu(0,\overline{t}(z_\mu^{(2)}+\psi)) &= \frac{1}{2} (1+o(1)) \| \varphi\|_{\lambda_1}^2 - \nu \int_{\mathbb{R}^N} h \frac{|\varphi|^{2} |z_\mu^{(2)}+\psi|^{\beta}}{|x|^s}dx \\
&= \left( \frac{1}{2}-\nu C'+o(1) \right) \, \| \varphi\|_{\lambda_1}^2,\qquad \mbox{ as } \|(\varphi,\psi)\|_{\mathbb{D}}\to 0.
\end{split}
\end{equation*}
Then for $\nu$ sufficiently small, we conclude that $(0,z_\mu^{(2)})$ is a local minimum of $\mathcal{J}_{\nu}$ in ${\mathcal{N}_\nu}$.

Next, we prove $iii)$ and $iv)$ follows similarly. Assume $\alpha<2$ and let $f(t)$ be the unique scalar satisfying $\displaystyle
(f(t)t\varphi,f(t)z_\mu^{(2)})\in\mathcal{N}_\nu$, with $\varphi\in \mathcal{D}^{1,2}(\mathbb{R}^N)\setminus\{0\}$ and $\mu>0$. Then, by \eqref{Nnueq1},
\begin{equation*}
\begin{split}
t^2\|\varphi\|^2_{\lambda_1}+\|z_\mu^{(2)}\|^2_{\lambda_2}
 =&[f(t)]^{2^*_{s}-2}|t|^{2^*_{s}}\left( \int_{\mathbb{R}^N}\dfrac{| \varphi|^{2^*_{s }}}{|x|^{s}}dx+\int_{\mathbb{R}^N}\dfrac{| z_\mu^{(2)}|^{2^*_{s }}}{|x|^{s}}dx \right)\\
& + (\alpha+\beta)\nu[f(t)]^{\alpha+\beta-2}|t|^{\alpha}\int_{\mathbb{R}^N} h \frac{|\varphi|^{\alpha} |z_\mu^{(2)}|^{\beta}}{|x|^s}dx.
\end{split}
\end{equation*}
Observe that $f(0)=1$. Moreover, $f \in C^1(\mathbb{R})$ by the implicit function theorem and
\begin{equation*}
f'(t)=\frac{\displaystyle2t\|\varphi\|^2_{\lambda_1} - 2^*_{s}f^{2^*_{s}-2}|t|^{2^*_{s}-2}t \int_{\mathbb{R}^N}\dfrac{| \varphi|^{2^*_{s}}}{|x|^{s}} - \alpha (\alpha+\beta)\nu f^{\alpha+\beta-2}|t|^{\alpha-2}t\int_{\mathbb{R}^N} h \frac{|\varphi|^{\alpha} |z_\mu^{(2)}|^{\beta}}{|x|^s} }{\displaystyle(2^*_{s}-2)f^{2^*_{s}-3}|t|^{2^*_{s}}\left(\!\int_{\mathbb{R}^N}\dfrac{| \varphi|^{2^*_{s}}}{|x|^{s}} + \int_{\mathbb{R}^N}\dfrac{| z_\mu^{(2)}|^{2^*_{s}}}{|x|^{s}}\right) + \nu \delta f^{\alpha+\beta-3}|t|^{\alpha}\int_{\mathbb{R}^N} h \frac{|\varphi|^{\alpha} |z_\mu^{(2)}|^{\beta}}{|x|^s}},
\end{equation*}
where $\delta=(\alpha+\beta)(\alpha+\beta-2)$. Since $\alpha<2$, we can write the previous expression as 
\begin{equation*}
f'(t)=\dfrac{\displaystyle- \alpha (\alpha+\beta)\nu\int_{\mathbb{R}^N} h \frac{|\varphi|^{\alpha} |z_\mu^{(2)}|^{\beta}}{|x|^s}dx }{\displaystyle (2^*_{s}-2) \int_{\mathbb{R}^N}\dfrac{| z_\mu^{(2)}|^{2^*_{s}}}{|x|^{s}}dx }\, |t|^{\alpha-2}t \, \, (1+o(1)), \qquad \mbox{ as }  t\to 0.
\end{equation*}
By integrating and using that $f(0)=1$, we obtain
\begin{equation*}
f(t)= 1- \frac{\displaystyle (\alpha+\beta)\nu\int_{\mathbb{R}^N} h \frac{|\varphi|^{\alpha} |z_\mu^{(2)}|^{\beta}}{|x|^s}dx }{\displaystyle (2^*_{s}-2) \int_{\mathbb{R}^N}\frac{| z_\mu^{(2)}|^{2^*_{s}}}{|x|^{s}} dx}\, |t|^{\alpha} \, \, (1+o(1)), \qquad \mbox{ as }  t\to 0,
\end{equation*}
and consequently, by Taylor expansion,
\begin{equation}\label{efe2star}
f^{2^*_{s}}(t)= 1- \frac{\displaystyle (N-s)(\alpha+\beta)\nu\int_{\mathbb{R}^N} h\frac{|\varphi|^{\alpha} |z_\mu^{(2)}|^{\beta}}{|x|^s} dx}{\displaystyle (2-s) \int_{\mathbb{R}^N}\dfrac{| z_\mu^{(2)}|^{2^*_{s}}}{|x|^{s}} dx}\, |t|^{\alpha} \, \, (1+o(1)), \qquad \mbox{ as }  t\to 0.
\end{equation}
By comparing the energy of the couples $(f(t)t\varphi,f(t)z_\mu^{(2)})$ and $(0,z_\mu^{(2)})$ using \eqref{Nnueq2} and \eqref{efe2star},
\begin{equation*}\label{compar2}
\begin{split}
\mathcal{J}_\nu&((f(t)t\varphi,f(t)z_\mu^{(2)})-\mathcal{J}_\nu(0,z_\mu^{(2)})\\ 
&=\frac{2-s}{2(N-s)}\left([f(t)]^{2^*_{s}}\int_{\mathbb{R}^N}\dfrac{| \varphi|^{2^*_{s}}}{|x|^{s}}dx + ([f(t)]^{2^*_{s}}-1)  \int_{\mathbb{R}^N}\dfrac{|z_\mu^{(2)}|^{2^*_{s}}}{|x|^{s}}dx\right)\\
&\ + \left(\frac{\alpha+\beta-2}{2}\right)\nu |t|^{\alpha} \int_{\mathbb{R}^N} h \frac{|\varphi|^{\alpha} |z_\mu^{(2)}|^{\beta}}{|x|^s}dx \\
&=- \left(\frac{\alpha+\beta}{2}\right)\nu |t|^{\alpha}\!\! \int_{\mathbb{R}^N} h \frac{|\varphi|^{\alpha} |z_\mu^{(2)}|^{\beta}}{|x|^s}dx + \left(\frac{\alpha+\beta-2}{2}\right)\nu |t|^{\alpha}\!\! \int_{\mathbb{R}^N} h \frac{|\varphi|^{\alpha} |z_\mu^{(2)}|^{\beta}}{|x|^s}dx+o(|t|^\alpha) \\
&=- \nu |t|^{\alpha} \int_{\mathbb{R}^N} h \frac{|\varphi|^{\alpha} |z_\mu^{(2)}|^{\beta}}{|x|^s}dx+o(|t|^\alpha)\quad\text{as }t\to0.
\end{split}
\end{equation*}
This implies that $\mathcal{J}_\nu((f(t)t\varphi,f(t)z_\mu^{(2)})-\mathcal{J}_\nu(0,z_\mu^{(2)}))<0$ for $t$ sufficiently small, so $(0,z_\mu^{(2)})$ is a local minimum of $\mathcal{J}_\nu$ in $\mathcal{N}_\nu$ for the previous path. Since $z_\mu^{(2)}$ is an absolute minimum of the decouple functional $\mathcal{J}_2$, for any couple $(0,\psi) \in \mathcal{N}_\nu$, we have $\mathcal{J}_\nu(0,\psi)>\mathcal{J}_\nu(0,z_\mu^{(2)})$, so $(0,z_\mu^{(2)})$ is a local minimum in $\{0\}\times \mathcal{N}_2\subset\mathcal{N}_{\nu}$. Thus, if $\alpha<2$, the couple $(0,z_\mu^{(2)})$ is a saddle point of $\mathcal{J}_\nu$ in $\mathcal{N}_\nu$. If $\alpha=2$, by considering again the couple $((f(t)t\varphi,f(t)z_\mu^{(2)}) \in \mathcal{N}_\nu$, we get
\begin{equation*}
f'(t)=\frac{\displaystyle 2\|\varphi\|^2_{\lambda_1}- 2 (2+\beta)\nu\int_{\mathbb{R}^N} h \frac{|\varphi|^{2} |z_\mu^{(2)}|^{\beta}}{|x|^s}dx}{\displaystyle (2^*_{s}-2) \int_{\mathbb{R}^N}\dfrac{| z_\mu^{(2)}|^{2^*_{s}}}{|x|^{s}}dx }\, t \, \, (1+o(1)), \qquad \mbox{ as }  t\to 0,
\end{equation*}
so that
\begin{equation*}
f(t)=1-\frac{\displaystyle (2+\beta)\nu\int_{\mathbb{R}^N} h \frac{|\varphi|^{2} |z_\mu^{(2)}|^{\beta}}{|x|^s}dx -\|\varphi\|^2_{\lambda_1} }{\displaystyle (2^*_{s}-2) \int_{\mathbb{R}^N}\dfrac{| z_\mu^{(2)}|^{2^*_{s}}}{|x|^{s}} dx}\, t^2 \, \, (1+o(1)), \qquad \mbox{ as }  t\to 0,
\end{equation*}
and we conclude
\begin{equation*}
\mathcal{J}_\nu((f(t)t\varphi,f(t)z_\mu^{(2)})-\mathcal{J}_\nu(0,z_\mu^{(2)}) = \left(\frac{1}{2} \|\varphi\|_{\lambda_1}^2 - \nu \int_{\mathbb{R}^N} h \frac{|\varphi|^{2} |z_\mu^{(2)}|^{\beta}}{|x|^s}dx \right)t^2 +o(t^2).
\end{equation*}
Subsequently, if $\nu$ is large enough the previous quantity will be negative and $(0,z_\mu^{(2)})$ is a saddle point of $\mathcal{J}_\nu$ in $\mathcal{N}_\nu$. Therefore, reasoning as before, the thesis $iii)$ is concluded.
\end{proof}
Before ending this preliminary section, we state the following algebraic result which extends \cite[Lemma 3.3]{AbFePe}, corresponding to $s=0$, to our weighted setting dealing with $s\in(0,2)$. Its proof follows analogously so we omit the details.
\begin{lemma}\label{algelemma}
Let $A, B>0$, $0\leq s<2$ and $\theta \ge 2$ and consider the set
\begin{equation*}
\Sigma_\nu=\{\sigma \in (0,+\infty)  \, : \, A \sigma^{\frac{2}{2_{s}^*}} < \sigma + B \nu \sigma^{\frac{\theta}{2_s^*}} \}.
\end{equation*}
Then, for every $\varepsilon>0$ there exists $\tilde{\nu}>0$ such that
\begin{equation*}
\inf_{\Sigma_\nu} \sigma > (1-\varepsilon) A^{\frac{N-s}{2-s}} \qquad \mbox{ for any } 0<\nu<\tilde{\nu}.
\end{equation*}
\end{lemma}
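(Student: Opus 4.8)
The plan is to turn the defining inequality of $\Sigma_\nu$ into a one–variable monotonicity statement and then treat the $\nu$–term as a lower–order perturbation. Set
\[
p\vcentcolon=\frac{2}{2^*_s}=\frac{N-2}{N-s}\in(0,1),\qquad q\vcentcolon=\frac{\theta}{2^*_s},
\]
and note that $\frac1{1-p}=\frac{N-s}{2-s}$, while the hypothesis $\theta\ge2$ yields $q-p=\frac{\theta-2}{2^*_s}\ge0$. Dividing the inequality $A\sigma^{p}<\sigma+B\nu\,\sigma^{q}$ by $\sigma^{p}>0$ shows that
\[
\sigma\in\Sigma_\nu\quad\Longleftrightarrow\quad g_\nu(\sigma)\vcentcolon=\sigma^{1-p}+B\nu\,\sigma^{q-p}>A .
\]
For $\nu=0$ this reads $\Sigma_0=\{\sigma>0:\sigma^{1-p}>A\}=\big(A^{(N-s)/(2-s)},+\infty\big)$, so $\inf\Sigma_0=A^{(N-s)/(2-s)}$ is exactly the target value; moreover, since $g_\nu(\sigma)\to+\infty$ as $\sigma\to+\infty$, the set $\Sigma_\nu$ is nonempty for every $\nu\ge0$ and the infimum is a genuine real number.

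The second step is the observation that $g_\nu$ is nondecreasing on $(0,+\infty)$: the exponent $1-p$ is positive and $q-p$ is nonnegative, so $\sigma\mapsto\sigma^{1-p}$ and $\sigma\mapsto\sigma^{q-p}$ are nondecreasing, and $B\nu\ge0$. Hence, for any $\sigma_0>0$ with $g_\nu(\sigma_0)\le A$ one gets $g_\nu(\sigma)\le A$ for every $\sigma\in(0,\sigma_0]$, i.e. $\Sigma_\nu\cap(0,\sigma_0]=\varnothing$, and therefore $\inf\Sigma_\nu\ge\sigma_0$.

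It then remains to pick $\sigma_0$ slightly below $A^{(N-s)/(2-s)}$ together with a compatible threshold for $\nu$. Given $\varepsilon>0$ (which we may take $<2$), set $\sigma_0\vcentcolon=(1-\tfrac\varepsilon2)\,A^{(N-s)/(2-s)}$, so that $\sigma_0^{1-p}=(1-\tfrac\varepsilon2)^{1-p}A$, and put $\delta\vcentcolon=\big(1-(1-\tfrac\varepsilon2)^{1-p}\big)A>0$. Then
\[
g_\nu(\sigma_0)=(1-\tfrac\varepsilon2)^{1-p}A+B\nu\,\sigma_0^{q-p}=A-\delta+B\nu\,\sigma_0^{q-p}\le A
\]
as soon as $\nu\le\tilde\nu\vcentcolon=\delta\big(B\,\sigma_0^{q-p}\big)^{-1}$. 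By the previous step, for every $0<\nu<\tilde\nu$ one obtains
\[
\inf\Sigma_\nu\ \ge\ \sigma_0\ =\ (1-\tfrac\varepsilon2)\,A^{(N-s)/(2-s)}\ >\ (1-\varepsilon)\,A^{(N-s)/(2-s)},
\]
which is the assertion.

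There is essentially no genuine obstacle here; the only point requiring a line of care is that $g_\nu$ is monotone on all of $(0,+\infty)$, not merely near the origin, and this is precisely where the assumption $\theta\ge2$ enters, through $q-p\ge0$, preventing the coupling term from being decreasing. Apart from this, the computation uses only the elementary identities $p=\frac{N-2}{N-s}$ and $\frac1{1-p}=\frac{N-s}{2-s}$, so the argument is verbatim the one of \cite[Lemma~3.3]{AbFePe}, the parameter $s$ intervening solely through the numerical values of these exponents.
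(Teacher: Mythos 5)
Your proof is correct: the reduction to the monotone function $g_\nu(\sigma)=\sigma^{1-p}+B\nu\sigma^{q-p}$ with $p=\tfrac{2}{2_s^*}$, the use of $\theta\ge 2$ to ensure $q-p\ge 0$, and the explicit choice of $\sigma_0$ and $\tilde\nu$ are all sound, and the exponent bookkeeping $\tfrac{1}{1-p}=\tfrac{N-s}{2-s}$ is right. This is essentially the same elementary perturbation argument as the proof of \cite[Lemma 3.3]{AbFePe} that the paper invokes and omits, adapted verbatim to the weighted exponents.
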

\section{The Palais-Smale condition}\label{section:PS}
A crucial step to obtain the existence of solutions relies on the compactness of the energy functional $\mathcal{J}_\nu$ provided by the PS condition. 
\begin{definition}
Let $V$ be a Banach space. We say that $\{u_n\} \subset V$ is a PS sequence at level $c$ for an energy functional $\mathfrak{F}:V\mapsto\mathbb{R}$ if
\begin{equation*}
\mathfrak{F}(u_n) \to c \quad \hbox{ and }\quad  \mathfrak{F}'(u_n) \to 0\quad\mbox{in}\ V'\quad \hbox{as}\quad n\to + \infty,
\end{equation*}
where $V'$ is the dual space of $V$. Moreover, we say that the functional $\mathfrak{F}$ satisfies the PS condition at level $c$ if every PS sequence at $c$ for $\mathfrak{F}$ has a strongly convergent subsequence.
\end{definition}
Next we state some results useful in the sequel. Their proofs are somehow standard and similar to \cite[Lemma 3.2]{CoLSOr} and \cite[Lemma 3.3]{CoLSOr} respectively, so we omit the details.
\begin{lemma}\label{lemma:PSNehari}
Let $\{(u_n,v_n)\} \subset \mathcal{N}_\nu$ be a PS sequence for $\mathcal{J}_\nu {\big|}_{\mathcal{N}_\nu}$ at level $c\in\mathbb{R}$. Then, $\{(u_n,v_n)\}$ is a PS sequence for $\mathcal{J}_\nu$ in $\mathbb{D}$, namely
\begin{equation}\label{PSNehari}
\mathcal{J}_{\nu}'(u_n,v_n)\to0 \quad \mbox{ in } \mathbb{D}' \quad\text{as }n\to+\infty.
\end{equation}
\end{lemma}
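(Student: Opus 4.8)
The plan is to turn the fact, established in the previous section, that $\mathcal{N}_\nu$ is a natural constraint into a quantitative statement that survives passage to the limit along PS sequences. First I would check that $\{(u_n,v_n)\}$ is bounded in $\mathbb{D}$: since $\mathcal{J}_\nu(u_n,v_n)\to c$ and, by \eqref{Nnueq} together with \eqref{alphabeta} (so that $\tfrac12-\tfrac1{\alpha+\beta}>0$),
\[
\mathcal{J}_\nu{\big|}_{\mathcal{N}_\nu}(u,v)\ge\Big(\tfrac12-\tfrac1{\alpha+\beta}\Big)\|(u,v)\|_{\mathbb{D}}^2\qquad\text{for all }(u,v)\in\mathcal{N}_\nu,
\]
the sequence $\{\|(u_n,v_n)\|_{\mathbb{D}}\}$ is bounded.

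Next I would bring in the Lagrange multipliers. Since $\mathcal{N}_\nu$ is a $C^1$ manifold of codimension one on which $\Psi'\ne 0$ (a consequence of \eqref{criticalpoint1}, exactly as observed in the previous section), a PS sequence for $\mathcal{J}_\nu{\big|}_{\mathcal{N}_\nu}$ produces a sequence $\{\omega_n\}\subset\mathbb{R}$ with
\[
\mathcal{J}'_\nu(u_n,v_n)-\omega_n\Psi'(u_n,v_n)\to0\quad\text{in }\mathbb{D}'.
\]
Testing this against $(u_n,v_n)$, using $\langle\mathcal{J}'_\nu(u_n,v_n)\,|\,(u_n,v_n)\rangle=\Psi(u_n,v_n)=0$ (as $(u_n,v_n)\in\mathcal{N}_\nu$) and $\langle\Psi'(u_n,v_n)\,|\,(u_n,v_n)\rangle=\mathcal{J}''_\nu(u_n,v_n)[u_n,v_n]^2$ on $\mathcal{N}_\nu$, we get
\[
\omega_n\,\mathcal{J}''_\nu(u_n,v_n)[u_n,v_n]^2=o(1)\,\|(u_n,v_n)\|_{\mathbb{D}}=o(1).
\]

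The heart of the matter is to show that $\mathcal{J}''_\nu(u_n,v_n)[u_n,v_n]^2$ is bounded away from zero, and here \eqref{criticalpoint1}--\eqref{criticalpoint2} do all the work. For $(u_n,v_n)\in\mathcal{N}_\nu$, \eqref{criticalpoint1} gives
\[
\mathcal{J}''_\nu(u_n,v_n)[u_n,v_n]^2=(2-\alpha-\beta)\|(u_n,v_n)\|_{\mathbb{D}}^2+(\alpha+\beta-2^*_s)\left(\int_{\mathbb{R}^N}\frac{|u_n|^{2^*_s}}{|x|^s}\,dx+\int_{\mathbb{R}^N}\frac{|v_n|^{2^*_s}}{|x|^s}\,dx\right),
\]
where by \eqref{alphabeta} the coefficient $2-\alpha-\beta<0$ and $\alpha+\beta-2^*_s\le0$; since the integrals are nonnegative and $\|(u_n,v_n)\|_{\mathbb{D}}>r_\nu$ by \eqref{criticalpoint2}, we obtain $\mathcal{J}''_\nu(u_n,v_n)[u_n,v_n]^2\le(2-\alpha-\beta)\,r_\nu^2<0$ uniformly in $n$, whence $\omega_n\to0$.

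To conclude, I would pass to the limit in the Lagrange identity: $\{(u_n,v_n)\}$ being bounded in $\mathbb{D}$ and $\Psi\in C^1(\mathbb{D},\mathbb{R})$ with $\Psi'$ bounded on bounded sets, $\{\Psi'(u_n,v_n)\}$ is bounded in $\mathbb{D}'$; together with $\omega_n\to0$ this yields $\omega_n\Psi'(u_n,v_n)\to0$ in $\mathbb{D}'$, hence $\mathcal{J}'_\nu(u_n,v_n)\to0$ in $\mathbb{D}'$, which is \eqref{PSNehari}. The single real obstacle is the uniform negativity of $\mathcal{J}''_\nu(u_n,v_n)[u_n,v_n]^2$, and it is handled entirely by \eqref{criticalpoint1}--\eqref{criticalpoint2} and the constraint $2<\alpha+\beta\le2^*_s$ from \eqref{alphabeta}.
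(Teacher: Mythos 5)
Your argument is correct and is exactly the standard Lagrange-multiplier argument that the paper invokes (it omits the details, referring to \cite[Lemma 3.2]{CoLSOr}): boundedness from \eqref{Nnueq}, existence of multipliers $\omega_n$ with $\mathcal{J}'_\nu(u_n,v_n)-\omega_n\Psi'(u_n,v_n)\to0$, testing with $(u_n,v_n)$ and using $\Psi(u_n,v_n)=0$ together with the uniform bound $\mathcal{J}''_\nu(u_n,v_n)[u_n,v_n]^2\le(2-\alpha-\beta)r_\nu^2<0$ from \eqref{criticalpoint1}--\eqref{criticalpoint2} to get $\omega_n\to0$, and finally boundedness of $\Psi'$ on bounded sets. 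No gaps; this matches the intended proof.
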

\begin{lemma}\label{lemmaPS0}
Let $\{(u_n,v_n)\} \subset \mathbb{D}$ be a PS sequence for $\mathcal{J}_\nu$ at level $c\in\mathbb{R}$. Then,  $\|(u_n,v_n)\|_{\mathbb{D}}<C$.
\end{lemma}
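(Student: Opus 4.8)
The plan is to exploit the two defining properties of the Palais--Smale sequence simultaneously through the standard combination
\begin{equation*}
\mathcal{J}_\nu(u_n,v_n)-\frac{1}{\alpha+\beta}\left\langle \mathcal{J}_\nu'(u_n,v_n)\,\big|\,(u_n,v_n)\right\rangle,
\end{equation*}
which is tailored so that the coupling term $\nu\int_{\mathbb{R}^N} h\,|u_n|^{\alpha}|v_n|^{\beta}|x|^{-s}\,dx$ cancels identically (hence no sign or boundedness assumption on $h$ is needed at this stage). Using the explicit expression of $\mathcal{J}_\nu$ in \eqref{functalphabeta} together with that of $\langle\mathcal{J}_\nu'(u,v)|(u,v)\rangle$, this quantity equals
\begin{equation*}
\left(\frac{1}{2}-\frac{1}{\alpha+\beta}\right)\|(u_n,v_n)\|_{\mathbb{D}}^2+\left(\frac{1}{\alpha+\beta}-\frac{1}{2^*_{s}}\right)\left(\int_{\mathbb{R}^N}\frac{|u_n|^{2^*_{s}}}{|x|^{s}}\,dx+\int_{\mathbb{R}^N}\frac{|v_n|^{2^*_{s}}}{|x|^{s}}\,dx\right).
\end{equation*}

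The second step is to observe that hypothesis \eqref{alphabeta} makes both coefficients nonnegative: since $\alpha+\beta>2$ we have $\kappa:=\frac{1}{2}-\frac{1}{\alpha+\beta}>0$, and since $\alpha+\beta\le 2^*_{s}$ we have $\frac{1}{\alpha+\beta}-\frac{1}{2^*_{s}}\ge 0$. Hence the Hardy--Sobolev terms may be discarded, which leaves
\begin{equation*}
\kappa\,\|(u_n,v_n)\|_{\mathbb{D}}^2\le \mathcal{J}_\nu(u_n,v_n)-\frac{1}{\alpha+\beta}\left\langle \mathcal{J}_\nu'(u_n,v_n)\,\big|\,(u_n,v_n)\right\rangle.
\end{equation*}

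Finally, I would close the estimate by bounding the right-hand side: the first term is bounded because $\mathcal{J}_\nu(u_n,v_n)\to c$, while the PS condition gives $\|\mathcal{J}_\nu'(u_n,v_n)\|_{\mathbb{D}'}=o(1)$, so $|\langle \mathcal{J}_\nu'(u_n,v_n)|(u_n,v_n)\rangle|\le o(1)\,\|(u_n,v_n)\|_{\mathbb{D}}$. Thus, for $n$ large, $\kappa\,\|(u_n,v_n)\|_{\mathbb{D}}^2\le C+o(1)\,\|(u_n,v_n)\|_{\mathbb{D}}$, a quadratic inequality in $\|(u_n,v_n)\|_{\mathbb{D}}$ that at once yields a uniform bound. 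I do not expect any genuine obstacle: no compactness or embedding property is required, the argument is uniform across the subcritical and critical regimes, and the only point needing care is that the leading coefficient $\kappa$ be \emph{strictly} positive, which is precisely where $\alpha+\beta>2$ enters.
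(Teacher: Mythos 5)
Your proof is correct and follows exactly the standard argument the paper has in mind: the authors omit the proof, referring to \cite[Lemma 3.3]{CoLSOr}, and the same combination $\mathcal{J}_\nu(u_n,v_n)-\frac{1}{\alpha+\beta}\langle\mathcal{J}_\nu'(u_n,v_n)|(u_n,v_n)\rangle$ with both coefficients nonnegative under \eqref{alphabeta} is precisely what they use later (see \eqref{eq:limit2}). Nothing is missing; the cancellation of the coupling term and the quadratic inequality in $\|(u_n,v_n)\|_{\mathbb{D}}$ settle the claim.
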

\subsection{Subcritical range $ \alpha+\beta < 2_{s}^*$}\hfill\newline
We establish now the Palais--Smale condition for subcritical energy levels of $\mathcal{J}_\nu$, which will allow us to find existence of solutions for \eqref{system:alphabeta} by minimization. 
\begin{lemma}\label{lemmaPS2}
 Assume $\alpha+\beta<2_{s}^*$.  Then, the functional $\mathcal{J}_\nu$ satisfies the PS condition for any level $c$ such that
\begin{equation}\label{hyplemmaPS2}
c<\min\left\{ \mathfrak{C}(\lambda_1,s), \mathfrak{C}(\lambda_2,s) \right\}.
\end{equation}
\end{lemma}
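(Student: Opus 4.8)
The plan is to take a PS sequence $\{(u_n,v_n)\}\subset\mathbb{D}$ at a level $c$ satisfying \eqref{hyplemmaPS2} and show it has a strongly convergent subsequence. By Lemma~\ref{lemmaPS0} the sequence is bounded in $\mathbb{D}$, so up to a subsequence $(u_n,v_n)\weakto(u,v)$ weakly in $\mathbb{D}$, strongly in $L^q_{\rm loc}$ for any $q<2^*_s$ with the weight $|x|^{-s}$, and a.e. in $\mathbb{R}^N$. Since $\mathcal{J}'_\nu(u_n,v_n)\to0$, passing to the limit in the weak formulation shows $(u,v)$ is a critical point of $\mathcal{J}_\nu$, hence $(u,v)\in\mathcal{N}_\nu\cup\{(0,0)\}$. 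Set $w_n=u_n-u$, $\omega_n=v_n-v$. The first key step is to exploit the \emph{compactness of the coupling term}: because $\alpha+\beta<2^*_s$, the map $u\mapsto u/|x|^{s/2^*_s}$ composed appropriately gives, via \cite[Lemma~3.2]{GhYu}, that $\int h|u_n|^\alpha|v_n|^\beta|x|^{-s}\to\int h|u|^\alpha|v|^\beta|x|^{-s}$ (here one uses $h\in L^\infty$ and a generalized dominated convergence / Brezis--Lieb argument for the subcritical exponent $\alpha+\beta$), and moreover $\int h|w_n|^\alpha|\omega_n|^\beta|x|^{-s}\to0$. Thus the only possible loss of compactness comes from the two Hardy--Sobolev critical terms $\int|u_n|^{2^*_s}|x|^{-s}$ and $\int|v_n|^{2^*_s}|x|^{-s}$.

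The second step is a standard Brezis--Lieb splitting. Using the Brezis--Lieb lemma for the critical terms and the weak-continuity facts above, one obtains
\begin{equation*}
\mathcal{J}_\nu(u_n,v_n)=\mathcal{J}_\nu(u,v)+\frac12\|(w_n,\omega_n)\|_{\mathbb{D}}^2-\frac1{2^*_s}\int_{\mathbb{R}^N}\frac{|w_n|^{2^*_s}+|\omega_n|^{2^*_s}}{|x|^s}\,dx+o(1),
\end{equation*}
and, testing $\mathcal{J}'_\nu(u_n,v_n)$ against $(w_n,\omega_n)$ and again peeling off the convergent pieces,
\begin{equation*}
\|(w_n,\omega_n)\|_{\mathbb{D}}^2-\int_{\mathbb{R}^N}\frac{|w_n|^{2^*_s}+|\omega_n|^{2^*_s}}{|x|^s}\,dx=o(1).
\end{equation*}
Call the common limit of these expressions $\ell\ge0$. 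If $\ell=0$ then $\|(w_n,\omega_n)\|_{\mathbb{D}}\to0$, i.e. strong convergence, and we are done. So suppose $\ell>0$. The Hardy--Sobolev inequality \eqref{H-S_lambda} applied separately to $w_n$ and $\omega_n$, combined with the elementary superadditivity inequality $a^{2/2^*_s}+b^{2/2^*_s}\ge(a+b)^{2/2^*_s}$ for $a,b\ge0$ (valid since $2/2^*_s<1$), yields
\begin{equation*}
\|(w_n,\omega_n)\|_{\mathbb{D}}^2\ge\mathcal{S}(\lambda_{\min},s)\left(\int_{\mathbb{R}^N}\frac{|w_n|^{2^*_s}+|\omega_n|^{2^*_s}}{|x|^s}\,dx\right)^{\frac{2}{2^*_s}},
\end{equation*}
where $\lambda_{\min}=\min\{\lambda_1,\lambda_2\}$; passing to the limit gives $\ell\ge\mathcal{S}(\lambda_{\min},s)\,\ell^{2/2^*_s}$, hence $\ell\ge[\mathcal{S}(\lambda_{\min},s)]^{\frac{N-s}{2-s}}$.

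The final step is the energy contradiction. Since $(u,v)$ is a critical point, $\mathcal{J}_\nu(u,v)=\mathcal{J}_\nu|_{\mathcal{N}_\nu}(u,v)\ge0$ whenever $(u,v)\neq(0,0)$ by \eqref{Nnueq} and \eqref{alphabeta}, and $\mathcal{J}_\nu(0,0)=0$; in all cases $\mathcal{J}_\nu(u,v)\ge0$. From the two displayed identities, $c=\mathcal{J}_\nu(u,v)+\left(\frac12-\frac1{2^*_s}\right)\ell=\mathcal{J}_\nu(u,v)+\frac{2-s}{2(N-s)}\ell$. Therefore
\begin{equation*}
c\ge\frac{2-s}{2(N-s)}[\mathcal{S}(\lambda_{\min},s)]^{\frac{N-s}{2-s}}=\mathfrak{C}(\lambda_{\min},s)=\min\{\mathfrak{C}(\lambda_1,s),\mathfrak{C}(\lambda_2,s)\},
\end{equation*}
using \eqref{critical_levels} and the monotonicity of $\mathcal{S}(\cdot,s)$. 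This contradicts \eqref{hyplemmaPS2}, so $\ell=0$ and the PS condition holds. The main obstacle is the careful justification that the coupling term and the lower-order (Hardy) terms pass to the limit so that the only surviving critical mass sits in the two pure Hardy--Sobolev terms — this is exactly where subcriticality $\alpha+\beta<2^*_s$ and the compact embedding of \cite[Lemma~3.2]{GhYu} are used; the rest is the now-classical Brezis--Lieb plus energy-threshold argument.
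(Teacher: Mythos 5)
Your argument is correct in substance, but it takes a genuinely different route from the paper's. The paper proves this lemma with Lions' concentration--compactness: it introduces limit measures with possible atoms only at $0$ and at infinity, tests $\mathcal{J}_\nu'$ against $(u_n\varphi,0)$ and $(0,v_n\varphi)$ for cut-offs centered at $0$ and supported near infinity to obtain the dichotomies $\rho_0=0$ or $\rho_0\ge[\mathcal{S}(\lambda_1,s)]^{\frac{N-s}{2-s}}$ (and likewise for $\overline\rho_0,\rho_\infty,\overline\rho_\infty$), and then inserts these atoms into the identity obtained from $\mathcal{J}_\nu-\frac{1}{\alpha+\beta}\langle\mathcal{J}_\nu'\,|\,\cdot\rangle$ to contradict \eqref{hyplemmaPS2}. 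Your Br\'ezis--Lieb splitting reaches the same contradiction without measures: the lost mass $\ell$ plays the role of $\rho_0+\rho_\infty+\overline\rho_0+\overline\rho_\infty$, and the global inequality \eqref{H-S_lambda} applied to $w_n,\omega_n$ themselves replaces the localized estimates. This is more elementary and avoids the $0$-versus-$\infty$ case analysis; what it does not give is the location of the concentration, which the paper reuses in the finer Lemmas~\ref{lemmaPS1}, \ref{lemcritic} and \ref{lemcritic2}. Note also that you need the full convergence of the coupling term $\int_{\mathbb{R}^N} h|u_n|^\alpha|v_n|^\beta|x|^{-s}dx\to\int_{\mathbb{R}^N} h|u|^\alpha|v|^\beta|x|^{-s}dx$ (and its Br\'ezis--Lieb remainder), whereas the paper only needs that this subcritical term produces no atom at $0$ nor at infinity; on $\mathbb{R}^N$ with merely $h\in L^\infty$ this compactness step is exactly where \cite[Lemma~3.2]{GhYu} enters and deserves the justification you sketch, in particular a word about the behaviour of the tails at infinity.

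One slip must be fixed: since $\mathcal{S}(\cdot,s)$ is decreasing in $\lambda$, the constant that is common to the two components is $\min\{\mathcal{S}(\lambda_1,s),\mathcal{S}(\lambda_2,s)\}=\mathcal{S}(\lambda_{\max},s)$ with $\lambda_{\max}=\max\{\lambda_1,\lambda_2\}$, not $\mathcal{S}(\lambda_{\min},s)$. As written, the inequality $\|w_n\|_{\lambda_1}^2\ge\mathcal{S}(\lambda_{\min},s)\bigl(\int|w_n|^{2^*_s}|x|^{-s}dx\bigr)^{2/2^*_s}$ is false whenever $\lambda_1>\lambda_2$, and your final identification $\mathfrak{C}(\lambda_{\min},s)=\min\{\mathfrak{C}(\lambda_1,s),\mathfrak{C}(\lambda_2,s)\}$ is also reversed (by \eqref{critical_levels} and \eqref{decreasing} it is the maximum). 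Replacing $\lambda_{\min}$ by $\lambda_{\max}$ everywhere, the chain goes through verbatim and yields $\ell\ge[\mathcal{S}(\lambda_{\max},s)]^{\frac{N-s}{2-s}}$, hence $c\ge\mathfrak{C}(\lambda_{\max},s)=\min\{\mathfrak{C}(\lambda_1,s),\mathfrak{C}(\lambda_2,s)\}$, which is exactly the contradiction with \eqref{hyplemmaPS2} you need.
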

\begin{proof}
By Lemma~\ref{lemmaPS0}, any PS sequence is bounded in $\mathbb{D}$. Thus, there exists $(\tilde{u},\tilde{v})\in\mathbb{D}$ and a subsequence (denoted also by $\{(u_n,v_n)\}$) such that 
 \begin{align*}
(u_n,v_n) \rightharpoonup (\tilde{u},\tilde{v})& \quad \hbox{weakly in  } \mathbb{D},\\
(u_n,v_n) \to (\tilde{u},\tilde{v})&\quad \hbox{strongly in  } L^q(\mathbb{R}^N)\times L^q(\mathbb{R}^N)\text{ for } 1\leq q<2_s^*,\\
(u_n,v_n) \to (\tilde{u},\tilde{v})&\quad \hbox{a.e. in  }\mathbb{R}^N.
\end{align*}
By the \textit{concentration-compactness principle} (cf. \cite{Lions1,Lions2}), there exist a subsequence (still denoted by) $\{(u_n,v_n)\}$ and positive numbers $\mu_0$, $\rho_0$, $\eta_0$, $\overline{\mu}_0$, $\overline{\rho}_0$ and $\overline{\eta}_0$ such that, in the sense of measures,
\begin{equation}\label{con-comp}
\left\{
\begin{array}{l}
|\nabla u_n|^2 \rightharpoonup d\mu\ge |\nabla \tilde{u}|^2+\mu_0\delta_0,\qquad |\nabla v_n|^2\rightharpoonup d\overline{\mu}\ge |\nabla \tilde{v}|^2+\overline{\mu}_0\delta_0,\\
\\
\dfrac{|u_n|^{2_{s}^*}}{|x|^{s}} \rightharpoonup d\rho=\dfrac{|\tilde{u}|^{2_{s}^*}}{|x|^{s}}+\rho_0\delta_0,\qquad\ \ 
\dfrac{|v_n|^{2_{s}^*}}{|x|^{s}}  \rightharpoonup d\overline{\rho}= \dfrac{|\tilde{v}|^{2_{s}^*}}{|x|^{s}}+\overline{\rho}_0\delta_0,\\
\\
\dfrac{u_n^2}{|x|^2} \rightharpoonup d\eta=\dfrac{\tilde{u}^2}{|x|^2}+\eta_0\delta_0,\qquad \mkern+40mu 
\dfrac{v_n^2}{|x|^2} \rightharpoonup d\overline{\eta}=\dfrac{\tilde{v}^2}{|x|^2}+\overline{\eta}_0\delta_0.
\end{array}
\right.
\end{equation}
Because of \eqref{hardy_inequality} and \eqref{hardy_sobolev_inequality}, the above numbers satisfy the inequalities
\begin{equation}\label{ineq:harcon}
\Lambda_N\eta_0\leq\mu_0\qquad\text{and}\qquad\Lambda_N\overline{\eta}_0\leq\overline{\mu}_0.
\end{equation}
The concentration at infinity of the sequence $\{u_n\}$ is encoded by the numbers
\begin{equation}\label{con-compinfty}
\begin{split}
\mu_{\infty}&=\lim\limits_{R\to+\infty}\limsup\limits_{n\to+\infty}\int_{|x|>R}|\nabla u_n|^{2}dx,\\
\rho_{\infty}&=\lim\limits_{R\to+\infty}\limsup\limits_{n\to+\infty}\int_{|x|>R}\frac{|u_n|^{2_{s}^*}}{|x|^{s}}dx,\\
\eta_{\infty}&=\lim\limits_{R\to+\infty}\limsup\limits_{n\to+\infty}\int_{|x|>R}\frac{u_n^{2}}{|x|^2}dx.
\end{split}
\end{equation}
The concentration at infinity of $\{v_n\}$ is encoded by the numbers $\overline{\mu}_{\infty}$, $\overline{\rho}_{\infty}$ and $\overline{\eta}_{\infty}$ defined analogously. Let $\varphi_{\varepsilon}(x)$ be a smooth cut-off function centered at $0$, i.e., $\varphi_{\varepsilon}\in C^{\infty}(\mathbb{R}^+_0)$,
\begin{equation}\label{cutoff}
\varphi_{\varepsilon}=1 \quad \hbox{in}\quad B_{\frac{\varepsilon}{2}}(0),\quad \varphi_{\varepsilon}=0 \quad \hbox{in}
\quad B_{\varepsilon}^c(0)\quad \hbox{and}\quad\displaystyle|\nabla \varphi_{\varepsilon}|\leq \frac{4}{\varepsilon},
\end{equation}
where $B_r(0)$ is the ball of radius $r>0$ centered at $0$. Testing $\mathcal{J}_{\nu}'(u_n,v_n)$ with $(u_n\varphi_{\varepsilon},0)$ (resp. with $(0,v_n\varphi_{0,\varepsilon})$) we get
\begin{equation}\label{eq:concompact1}
\begin{split}
0&=\lim\limits_{n\to+\infty}\left(\int_{\mathbb{R}^N}|\nabla u_n|^2\varphi_{\varepsilon}dx+\int_{\mathbb{R}^N}u_n\nabla u_n\nabla\varphi_{\varepsilon}dx-\lambda_1\int_{\mathbb{R}^N}\frac{u_n^2}{|x|^2}\varphi_{\varepsilon}dx\right.\\
&\mkern+80mu-\left.\int_{\mathbb{R}^N}\frac{|u_n|^{2_{s}^*}}{|x|^{s}}\varphi_{\varepsilon}dx-\nu\alpha\int_{\mathbb{R}^N}h\frac{|u_n|^\alpha  |v_n|^\beta}{|x|^{s}}  \varphi_{\varepsilon} dx\right)\\
&=\int_{\mathbb{R}^N}\varphi_{\varepsilon}d\mu+\int_{\mathbb{R}^N}\tilde{u}\nabla \tilde{u}\nabla\varphi_{\varepsilon}dx-\lambda_1\int_{\mathbb{R}^N}\varphi_{\varepsilon}d\eta-\int_{\mathbb{R}^N}\varphi_{\varepsilon}d\rho-\nu\alpha\int_{\mathbb{R}^N}h\frac{|\tilde{u}|^\alpha |\tilde{v}|^{\beta}}{|x|^{s}} \varphi_{\varepsilon} \, dx.
\end{split}
\end{equation}
Taking $\varepsilon\to0$ we find $\mu_0-\lambda_1\eta_0-\rho_0\leq 0$ (resp. $\overline{\mu}_0-\lambda_2\overline{\eta}_0-\overline{\rho}_0\leq0$).
By \eqref{H-S_lambda}, we also have
\begin{equation}\label{ineq:con0_a}
\mu_0-\lambda_1\eta_0\ge \mathcal{S}(\lambda_1,s)\rho_0^{\frac{2}{2_{s}^*}}\qquad\text{and}\qquad
\overline{\mu}_0-\lambda_2\overline{\eta}_0\ge \mathcal{S}(\lambda_2,s)\overline{\rho}_0^{\frac{2}{2_{s}^*}},
\end{equation}
from where we conclude
\begin{equation}\label{ineq:con0}
\begin{split}
\rho_0&=0\quad\text{or}\quad \rho_0\ge \left[\mathcal{S}(\lambda_1,s)\right]^{\frac{N-s}{2-s}},\\
\overline{\rho}_0&=0\quad\text{or}\quad \overline{\rho}_0\ge \left[\mathcal{S}(\lambda_2,s)\right]^{\frac{N-s}{2-s}}.
\end{split}
\end{equation}
Finally, for $R>0$, consider $\varphi_{\infty,\varepsilon}$ a cut-off function supported near $\infty$, i.e.,
\begin{equation}\label{cutoffinfi}
\varphi_{\infty,\varepsilon}=0 \quad \hbox{in}\quad B_{R}(0),\quad \varphi_{\infty,\varepsilon}=1 \quad \hbox{in}
\quad B_{R+1}^c(0)\quad \hbox{and}\quad\displaystyle|\nabla \varphi_{\infty,\varepsilon}|\leq \frac{4}{\varepsilon}.
\end{equation}
Testing $\mathcal{J}_{\nu}'(u_n,v_n)$ with $(u_n\varphi_{\infty,\varepsilon},0)$, we can similarly prove that $\mu_{\infty}-\lambda_1\eta_{\infty}-\rho_{\infty}\leq 0$. Moreover, testing $\mathcal{J}_{\nu}'(u_n,v_n)$ with $(0,v_n\varphi_{\infty,\varepsilon})$ we also get $\overline{\mu}_{\infty}-\lambda_2\overline{\eta}_{\infty}-\overline{\rho}_{\infty}\leq0$. Thus
\begin{equation}\label{ineq:coninf_a}
\mu_{\infty}-\lambda_1\eta_{\infty}\ge \mathcal{S}(\lambda_1,s)\rho_{\infty}^{\frac{2}{2_{s}^*}}\qquad\text{and}\qquad
\overline{\mu}_{\infty}-\lambda_2\overline{\eta}_{\infty}\ge \mathcal{S}(\lambda_2,s)\overline{\rho}_{\infty}^{\frac{2}{2_{s}^*}},
\end{equation}
and we also conclude
\begin{equation}\label{ineq:coninf}
\begin{split}
\rho_{\infty}&=0\quad\text{or}\quad \rho_{\infty}\ge \left[\mathcal{S}(\lambda_1,s)\right]^{\frac{N-s}{2-s}},\\
\overline{\rho}_{\infty}&=0\quad\text{or}\quad \overline{\rho}_{\infty}\ge \left[\mathcal{S}(\lambda_2,s)\right]^{\frac{N-s}{2-s}}.
\end{split}
\end{equation}
Next, since given $\{(u_n,v_n)\}$ a PS sequence for $\mathcal{J}_{\nu}$ at level $c$, we have $\mathcal{J}_{\nu}(u_n,v_n)\to c$ and $\mathcal{J}_{\nu}'(u_n,v_n)\to0$ as $n\to+\infty$, it follows that, up to a subsequence (still denoted by $\{(u_n,v_n)\}$),
\begin{equation*}
\mathcal{J}_{\nu}(u_n,v_n)-\frac{1}{\alpha+\beta}\left\langle \mathcal{J}_{\nu}'(u_n,v_n)\left|\frac{(u_n,v_n)}{\|(u_n,v_n)\|_{\mathbb{D}}} \right.\right\rangle=c+\|(u_n,v_n)\|_{\mathbb{D}}\cdot o(1),
\end{equation*}
that is
\begin{equation}\label{eq:limit2}
\begin{split}
c=&\left( \frac{1}{2}- \frac{1}{\alpha+\beta} \right)\|(u_n,v_n)\|_{\mathbb{D}}^2\\
&+\left( \frac{1}{\alpha+\beta}-\frac{1}{2^*_{s}} \right)\left(\int_{\mathbb{R}^N} \frac{|u_n|^{2^*_{s}}}{|x|^{s}}dx +\int_{\mathbb{R}^N} \frac{|v_n|^{2^*_{s}}}{|x|^{s}}dx\right)+o(1).
\end{split}
\end{equation}
Hence, because of \eqref{con-comp}, \eqref{ineq:con0_a} and \eqref{ineq:coninf_a} above, we find
\begin{equation}\label{ineq:larga}
\begin{split}
c\ge& \left(\frac{1}{2}-\frac{1}{\alpha+\beta} \right)\Bigg(\|(\tilde{u},\tilde{v})\|_{\mathbb{D}}^2 + (\mu_0-\lambda_1\eta_0)+(\mu_{\infty}-\lambda_1\eta_{\infty}) \\
&\mkern+199mu +(\overline{\mu}_0-\lambda_2\overline{\eta}_0)+(\overline{\mu}_{\infty}-\lambda_2\overline{\eta}_{\infty})\Bigg)\\
&+\left(\frac{1}{\alpha+\beta}- \frac{1}{2_{s}^*} \right)\left(\int_{\mathbb{R}^N}\frac{|\tilde{u}|^{2_{s}^*}}{|x|^{s}}dx+  \rho_0+\rho_{\infty} +\int_{\mathbb{R}^N}\frac{|\tilde{v}|^{2_{s}^*}}{|x|^{s}}dx +\overline{\rho}_0+\overline{\rho}_{\infty} \right)\\
\ge&\left(\frac{1}{2}-\frac{1}{\alpha+\beta} \right)\left( \mathcal{S}(\lambda_1,s)\left[\rho_0^{\frac{2}{2_{s}^*}}+\rho_{\infty}^{\frac{2}{2_{s}^*}}\right]+\mathcal{S}(\lambda_2,s)\left[\overline{\rho}_0^{\frac{2}{2_{s}^*}}+\overline{\rho}_{\infty}^{\frac{2}{2_{s}^*}}\right]\right)\\
&+\left(\frac{1}{\alpha+\beta}- \frac{1}{2_{s}^*} \right)\left(\rho_0+\rho_{\infty} +\overline{\rho}_0+\overline{\rho}_{\infty} \right).
\end{split}
\end{equation}
If $\rho_0\neq0$, from \eqref{ineq:larga} and \eqref{ineq:con0}, we get
\begin{equation*}
c\ge\frac{2-s}{2(N-s)}\left[\mathcal{S}(\lambda_1,s)\right]^{\frac{N-s}{2-s}}=\mathfrak{C}(\lambda_1,s),
\end{equation*}
and we reach a contradiction with the hypothesis on the energy level $c$. Then, $\rho_0=0$. Similarly, we also get $\overline{\rho}_0=0$. Arguing as above and using \eqref{ineq:coninf} we also find $\rho_{\infty}=0$ and $\overline{\rho}_{\infty}=0$. Thus, there exists a subsequence strongly converging in $L^{2_s^*}(\mathbb{R}^N)\times L^{2_s^*}(\mathbb{R}^N)$, so that
\begin{equation*}
\|(u_n-\tilde{u},v_n-\tilde{v})\|_{\mathbb{D}}^2 = \left\langle \mathcal{J}_\nu'(u_n,v_n)\big| (u_n-\tilde{u},v_n-\tilde{v}) \right\rangle + o(1).
\end{equation*}
Therefore, the sequence $\{(u_n,v_n)\}$ strongly converges in $\mathbb{D}$ and the PS condition holds.
\end{proof}

Next, we improve Lemma~\ref{lemmaPS2} by proving the PS condition for supercritical energy levels, excluding multiples or combinations of the critical ones. 

In order to find positive solutions of \eqref{system:alphabeta} we consider the truncated problem
\begin{equation}\label{systemp}
\left\{\begin{array}{ll}
\displaystyle -\Delta u - \lambda_1 \frac{u}{|x|^2}-\frac{(u^+)^{2_{s}^*-1}}{|x|^{s}}=  \nu\alpha h(x) \frac{(u^+)^{\alpha-1}\, (v^+)^{\beta}}{|x|^{s}}  &\text{in }\mathbb{R}^N,\vspace{.3cm}\\
\displaystyle -\Delta v - \lambda_2 \frac{v}{|x|^2}-\frac{(v^+)^{2_{s}^*-1}}{|x|^{s}}= \nu \beta h(x)  \frac{(u^+)^{\alpha}\, (v^+)^{\beta-1}}{|x|^{s}} &\text{in }\mathbb{R}^N,
\end{array}\right.
\end{equation}
where  $u^+=\max\{u,0\}$. Note that $u=u^+ + u^-$ where $u^-$ is negative part of the function $u$, i.e., $u^-=\min\{u,0\}$. Let us also note that a solution $(u,v)$ of \eqref{system:alphabeta} also satisfies \eqref{systemp}.

The system \eqref{systemp} is a variational system and its solutions correspond to critical points of
\begin{equation}\label{funct:SKdVp}
\mathcal{J}^+_\nu (u,v)= \|(u,v)\|^2_{\mathbb{D}}- \frac{1}{2^*_{s}} \int_{\mathbb{R}^N} \frac{(u^+)^{2^*_{s}}}{|x|^{s}} dx - \frac{1}{2^*_{s}} \int_{\mathbb{R}^N} \frac{(v^+)^{2^*_{s}}}{|x|^{s}} dx -\nu \int_{\mathbb{R}^N} h \frac{(u^+)^\alpha (v^+)^{\beta}}{|x|^{s}} dx,
\end{equation}
defined in $\mathbb{D}$. We will denote by $\mathcal{N}^+_\nu$ the Nehari manifold associated to $\mathcal{J}^+_\nu $. In particular,
\begin{equation*}
\mathcal{N}^+_\nu=\left\{ (u,v) \in \mathbb{D} \setminus \{(0,0)\} \, : \,  \left\langle (\mathcal{J}_\nu^+)' (u,v){\big|}(u,v)\right\rangle=0 \right\}.
\end{equation*}
Given $(u,v) \in \mathcal{N}^+_\nu$, the following identity holds
\begin{equation} \label{Nnueqp}
\|(u,v)\|_\mathbb{D}^2=\int_{\mathbb{R}^N} \frac{(u^+)^{2^*_{s}}}{|x|^{s}} dx + \int_{\mathbb{R}^N} \frac{(v^+)^{2^*_{s}}}{|x|^{s}} dx +\nu (\alpha+\beta) \int_{\mathbb{R}^N} h(x) \frac{(u^+)^{\alpha} (v^+)^{\beta}}{|x|^s}  dx.
\end{equation}
Observe that, on the Nehari manifold $\mathcal{N}^+_\nu$, the functional $\mathcal{J}^+_{\nu}$ read as
\begin{equation}\label{Nnueqp1}
\begin{split}
\mathcal{J}^+_{\nu}{\big|}_{\mathcal{N}_\nu} (u,v) =& \left( \frac{1}{2}-\frac{1}{\alpha+\beta} \right) \|(u,v)\|^2_{\mathbb{D}}\\
& + \left( \frac{1}{\alpha+\beta}-\frac{1}{2^*_{s}} \right)\left(\int_{\mathbb{R}^N} \frac{(u^+)^{2^*_{s}}}{|x|^{s}}dx+ \int_{\mathbb{R}^N} \frac{(v^+)^{2^*_{s}}}{|x|^{s}} dx\right).
\end{split}
\end{equation}
\begin{lemma}\label{lemmaPS1}
Assume that $\alpha+\beta<2_{s}^*$, $\alpha\ge2$ and $\lambda_1\leq\lambda_2$. Then, there exists $\tilde{\nu}>0$ such that, if $0<\nu\leq\tilde{\nu}$ and $\{(u_n,v_n)\} \subset \mathbb{D}$ is a PS sequence for $\mathcal{J}^+_\nu$ at level $c\in\mathbb{R}$ such that
\begin{equation}\label{PS1}
\mathfrak{C}(\lambda_1,s)<c<\mathfrak{C}(\lambda_1,s)+\mathfrak{C}(\lambda_2,s)
\end{equation}
and
\begin{equation}\label{PS2}
c\neq \ell \mathfrak{C}(\lambda_2,s) \quad \mbox{ for every } \ell \in \mathbb{N}\setminus \{0\},
\end{equation}
then $(u_n,v_n)\to(\tilde{u},\tilde{v}) \in \mathbb{D}$ up to subsequence.
\end{lemma}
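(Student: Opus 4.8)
The plan is to combine the concentration-compactness analysis already carried out for Lemma~\ref{lemmaPS2} with a global-compactness (Struwe-type) splitting of the Palais--Smale sequence, and then to discard every noncompact alternative by means of the quantization conditions \eqref{PS1}--\eqref{PS2}, the ordering $\lambda_1\le\lambda_2$ and the algebraic Lemma~\ref{algelemma}. First I would observe that, by Lemma~\ref{lemmaPS0}, the sequence $\{(u_n,v_n)\}$ is bounded in $\mathbb D$, so up to a subsequence it converges weakly in $\mathbb D$, a.e. in $\mathbb R^N$ and strongly in $L^q(\mathbb R^N)\times L^q(\mathbb R^N)$ for every $q<2^*_s$ to some $(\tilde u,\tilde v)$. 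Testing $(\mathcal J^+_\nu)'(u_n,v_n)$ against $(u_n^-,0)$ and $(0,v_n^-)$ gives $u_n^-,v_n^-\to0$ in $\mathcal D^{1,2}(\mathbb R^N)$, hence $\tilde u,\tilde v\ge0$, and passing to the limit in $(\mathcal J^+_\nu)'(u_n,v_n)\to0$ shows that $(\tilde u,\tilde v)$ is a nonnegative critical point of $\mathcal J^+_\nu$, that is, a (possibly trivial or semi-trivial) solution of \eqref{systemp}.

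Next I would repeat, essentially verbatim, the concentration-compactness argument of Lemma~\ref{lemmaPS2}: testing $(\mathcal J^+_\nu)'(u_n,v_n)$ with the cut-offs \eqref{cutoff} and \eqref{cutoffinfi} (the extra coupling term that now appears is negligible since $\alpha+\beta<2^*_s$) yields the concentration masses $\rho_0,\rho_\infty$ of $\{u_n\}$ and $\bar\rho_0,\bar\rho_\infty$ of $\{v_n\}$ at the origin and at infinity, subject to the alternatives \eqref{ineq:con0}, \eqref{ineq:coninf}: each of $\rho_0,\rho_\infty$ is either $0$ or $\ge[\mathcal S(\lambda_1,s)]^{\frac{N-s}{2-s}}$, and each of $\bar\rho_0,\bar\rho_\infty$ either $0$ or $\ge[\mathcal S(\lambda_2,s)]^{\frac{N-s}{2-s}}$. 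If all four vanish, the argument of Lemma~\ref{lemmaPS2} gives $(u_n,v_n)\to(\tilde u,\tilde v)$ strongly in $\mathbb D$, so I may assume at least one of them is positive, and then refine the analysis into a blow-up decomposition. Since the coupling is subcritical and $h\in L^\infty(\mathbb R^N)$ it disappears in the rescaled limits, and the bubbles --- which must concentrate at the origin, as no other scaling is compatible with the weight $|x|^{-s}$ --- are nonnegative rescalings of the extremals $z^{(1)}_\mu,z^{(2)}_\mu$ of \eqref{entire}, carrying energy $\mathfrak C(\lambda_1,s)$ and $\mathfrak C(\lambda_2,s)$ respectively (cf. \eqref{critical_levels}). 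A Struwe-type decomposition then produces integers $k_1,k_2\ge0$, not both zero, with $c=\mathcal J_\nu(\tilde u,\tilde v)+k_1\,\mathfrak C(\lambda_1,s)+k_2\,\mathfrak C(\lambda_2,s)$.

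It then remains to rule out every possibility. If $\tilde u\equiv0$ (the case $\tilde v\equiv0$ being symmetric) the second equation of \eqref{systemp} decouples, forcing $\tilde v\in\{0,z^{(2)}_\mu\}$, so $c$ takes one of the values $k_1\mathfrak C(\lambda_1,s)+(k_2+\delta)\mathfrak C(\lambda_2,s)$ with $\delta\in\{0,1\}$; since $\mathfrak C(\lambda_2,s)\le\mathfrak C(\lambda_1,s)$ (because $\lambda_1\le\lambda_2$), $\mathfrak C(\lambda_1,s)<c<\mathfrak C(\lambda_1,s)+\mathfrak C(\lambda_2,s)$ by \eqref{PS1}, and $c\ne\ell\,\mathfrak C(\lambda_2,s)$ by \eqref{PS2}, none of these values is admissible. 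If $(\tilde u,\tilde v)$ is fully nontrivial I would test the first equation of \eqref{systemp} with $\tilde u$ and argue as in \eqref{Hold1}--\eqref{Hold2}: H\"older with exponents $\tfrac{2^*_s}{\alpha},\tfrac{2^*_s}{2^*_s-\alpha}$, inequality \eqref{hardy_sobolev_inequality} and the a priori bound $\int_{\mathbb R^N}|x|^{-s}\tilde v^{2^*_s}\,dx\le C$ (a consequence of $\mathcal J_\nu(\tilde u,\tilde v)\le c$) give
$$\mathcal S(\lambda_1,s)\,\sigma^{\frac{2}{2^*_s}}\le\sigma+C\nu\,\sigma^{\frac{\alpha}{2^*_s}},\qquad \sigma:=\int_{\mathbb R^N}\frac{\tilde u^{2^*_s}}{|x|^s}\,dx>0.$$
Because $\alpha\ge2$, Lemma~\ref{algelemma} applies and yields, for every $\varepsilon>0$ and all $0<\nu<\tilde\nu(\varepsilon)$, $\sigma>(1-\varepsilon)[\mathcal S(\lambda_1,s)]^{\frac{N-s}{2-s}}$, whence $\mathcal J_\nu(\tilde u,\tilde v)\ge\frac{2-s}{2(N-s)}\sigma>(1-\varepsilon)\mathfrak C(\lambda_1,s)$; inserting this into $c=\mathcal J_\nu(\tilde u,\tilde v)+k_1\mathfrak C(\lambda_1,s)+k_2\mathfrak C(\lambda_2,s)$ and invoking once more \eqref{PS1}, \eqref{PS2} and $\lambda_1\le\lambda_2$ contradicts the admissible range of $c$ when $\varepsilon,\nu$ are small. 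Hence no concentration survives and $(u_n,v_n)\to(\tilde u,\tilde v)$ in $\mathbb D$.

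The hard part will be the blow-up bookkeeping of the second paragraph, namely establishing the precise energy quantization in the weighted Hardy--Sobolev setting --- bubbles located only at the origin, nonnegative, and decoupled --- and then, in the last paragraph, combining it carefully enough with the quantitative estimate of Lemma~\ref{algelemma} to exclude every pair $(k_1,k_2)$ and every type of weak limit, in particular the borderline scenario of a fully nontrivial critical point of energy close to $\mathfrak C(\lambda_1,s)$ accompanied by a single bubble; this is exactly the step in which $\alpha\ge2$, the ordering $\lambda_1\le\lambda_2$ of the Hardy parameters and the exclusion \eqref{PS2} of the multiples of $\mathfrak C(\lambda_2,s)$ are indispensable.
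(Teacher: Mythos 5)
Your overall strategy (concentration--compactness, quantized bubble energies, then exclusion of every alternative via \eqref{PS1}--\eqref{PS2} and Lemma~\ref{algelemma}) is in the same family as the paper's, but it rests on an ingredient you assert rather than prove: a Struwe-type \emph{exact} decomposition $c=\mathcal{J}_\nu(\tilde u,\tilde v)+k_1\,\mathfrak{C}(\lambda_1,s)+k_2\,\mathfrak{C}(\lambda_2,s)$ for the \emph{coupled} system, with nonnegative bubbles equal to the extremals $z^{(j)}_\mu$ concentrating only at $0$ (and you should also allow $\infty$). No such global-compactness theorem is available here, and the paper deliberately avoids needing one: it only uses the concentration--compactness lower bounds \eqref{ineq:con0}, \eqref{ineq:coninf} (which give inequalities, not exact multiples) together with the single-equation quantization of \cite{LiGuoNiu} in the special sub-case where one component's weak limit vanishes, so that that component is a PS sequence for the decoupled functional $\mathcal{J}_1$ or $\mathcal{J}_2$. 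Showing that the coupling ``disappears in the rescaled limits'' and that every bubble of $u_n$ (even when $\tilde u\not\equiv0$ and the equation for $u_n$ still carries the term $\nu\alpha h\,u_n^{\alpha-1}v_n^{\beta}/|x|^s$) is a positive solution of \eqref{entire} with energy exactly $\mathfrak{C}(\lambda_j,s)$ is a nontrivial claim in this weighted setting and would need a proof of its own.

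More importantly, even granting your decomposition, the final exclusion step does not close. In the fully nontrivial case you only obtain $\mathcal{J}_\nu(\tilde u,\tilde v)>(1-\varepsilon)\,\mathfrak{C}(\lambda_1,s)$ from Lemma~\ref{algelemma}. With a single $v$-bubble ($k_1=0$, $k_2=1$) this gives $c>(1-\varepsilon)\,\mathfrak{C}(\lambda_1,s)+\mathfrak{C}(\lambda_2,s)$, which is \emph{not} in contradiction with the strict, non-quantitative bound $c<\mathfrak{C}(\lambda_1,s)+\mathfrak{C}(\lambda_2,s)$ of \eqref{PS1}, for any fixed $\varepsilon>0$; the same failure occurs for a single $u$-bubble when $\lambda_1=\lambda_2$, and \eqref{PS2} is of no help unless $c$ happens to be a multiple of $\mathfrak{C}(\lambda_2,s)$. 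This is exactly the ``borderline scenario'' you flag as the hard part, but you leave it unresolved, so the proof is incomplete. The missing ingredient is to exploit the strictly positive critical mass of the \emph{other} component: since $\tilde v\not\equiv0$, one has $\frac{2-s}{2(N-s)}\int_{\mathbb{R}^N}\tilde v^{2^*_s}|x|^{-s}\,dx\ge\tilde\varepsilon>0$, and choosing $\varepsilon$ with $\varepsilon\,\mathfrak{C}(\lambda_1,s)\le\tilde\varepsilon$ upgrades the estimate to $\mathcal{J}_\nu(\tilde u,\tilde v)\ge\mathfrak{C}(\lambda_1,s)$, whence $c\ge\mathfrak{C}(\lambda_1,s)+\mathfrak{C}(\lambda_2,s)$, the desired contradiction; this is precisely how the paper closes its second case, while in its first case (concentration of $u_n$ with $v_n$ strongly convergent) it instead invokes Theorem~\ref{thm:groundstatesalphabeta}, i.e. $\tilde c_\nu=\mathfrak{C}(\lambda_2,s)$ for $\nu$ small. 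Without either of these devices, your argument excludes neither that scenario nor, consequently, the loss of compactness.
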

\begin{proof}
Arguing as in Lemma~\ref{lemmaPS0}, any PS sequence for $\mathcal{J}_\nu^+$ is bounded in $\mathbb{D}$. Thus,  there exists a subsequence $\{(u_n,v_n)\}\rightharpoonup(\tilde{u},\tilde{v}) \in \mathbb{D}$. Since $(\mathcal{J}^+_\nu)'(u_n,v_n)\to 0$ in $\mathbb{D}'$, then
\begin{equation*}
\left\langle (\mathcal{J}^+_\nu)'(u_n,v_n){\big|} (u_n^-,0)\right\rangle= \int_{\mathbb{R}^N} |\nabla u_n^-|^2 \, dx - \lambda_1 \int_{\mathbb{R}^N} \dfrac{(u_n^-)^2}{|x|^2} \, dx  \to0,
\end{equation*}
and, hence, $u_n^-\to 0$ strongly in $\mathcal{D}^{1,2} (\mathbb{R}^N)$. Similarly, we can also prove $v_n^-\to 0$.
Thus, we can assume that $\{(u_n,v_n)\}$ is a non-negative PS sequence at level $c$ for $\mathcal{J}_\nu$.

As in the proof of Lemma~\ref{lemmaPS2}, we deduce the existence of a subsequence, still denoted by $\{(u_n,v_n)\}$ and positive numbers $\mu_0$, $\rho_0$, $\eta_0$, $\overline{\mu}_0$, $\overline{\rho}_0$ and $\overline{\eta}_0$ such that \eqref{con-comp} holds. In addition, the inequalities \eqref{ineq:con0_a}, \eqref{ineq:con0} also hold. Analogously, the concentration at infinity is encoded by the values $\mu_\infty$, $\rho_\infty$, $\overline{\mu}_\infty$ and $\overline{\rho}_\infty$ as in \eqref{con-compinfty}, for which \eqref{ineq:coninf_a} and \eqref{ineq:coninf} also hold.

Next, we claim:
\begin{equation}\label{claimPS21}
\mbox{ either } u_n\to \tilde u\  \mbox{  strongly in } L^{2_s^*}(\mathbb{R}^N) \qquad \mbox{or} \qquad v_n\to \tilde v\ \mbox{  strongly in }L^{2_s^*}(\mathbb{R}^N).
\end{equation}

Let us argue by contradiction. Assume that $\{u_n\}$ and $\{v_n\}$ do not strongly converge in $L^{2_s^*}(\mathbb{R}^N)$. Then, there exists $j,k\in \{0, \infty\}$ such that $\rho_{j}>0$ and $\overline{\rho}_k>0$. Thus, as in \eqref{eq:limit2}, because of \eqref{con-comp}, \eqref{ineq:con0_a}, \eqref{ineq:con0}, \eqref{ineq:coninf_a} and \eqref{ineq:coninf} applied in \eqref{ineq:larga} we get
\begin{equation*}
\begin{split}
c=&\left( \frac{1}{2} -\frac{1}{\alpha+\beta} \right)\|(u_n,v_n)\|_{\mathbb{D}}^2+\left( \frac{1}{\alpha+\beta}-\frac{1}{2^*_{s}} \right)\left(\int_{\mathbb{R}^N} \frac{|u_n|^{2^*_{s}}}{|x|^{s}}dx + \int_{\mathbb{R}^N} \frac{|v_n|^{2^*_{s}}}{|x|^{s}}dx\right)+o(1)\\
\ge&\left( \frac{1}{2} -\frac{1}{\alpha+\beta} \right)\left(\mathcal{S}(\lambda_1,s)\rho_j^{\frac{2}{2_{s}^*}} +\mathcal{S}(\lambda_2,s) \overline{\rho}_k^{\frac{2}{2_{s}^*}}\right)+\left( \frac{1}{\alpha+\beta}-\frac{1}{2^*_{s}} \right)(\rho_j+\overline{\rho}_k) \\
\ge&\, \frac{2-s}{2(N-s)}\left(\left[\mathcal{S}(\lambda_1,s)\right]^{\frac{N-s}{2-s}}+\left[\mathcal{S}(\lambda_2,s)\right]^{\frac{N-s}{2-s}}\right)\\
=&\,\mathfrak{C}(\lambda_1,s)+\mathfrak{C}(\lambda_2,s),
\end{split}
\end{equation*}
in contradiction with \eqref{PS1}, so \eqref{claimPS21} is proved. Consequently, we claim:
\begin{equation}\label{claimPS22}
\mbox{ either } u_n\to  \tilde u\  \mbox{ in } \mathcal{D}^{1,2}(\mathbb{R}^N) \qquad \mbox{ or } \qquad v_n\to  \tilde v \ \mbox{ in } \mathcal{D}^{1,2}(\mathbb{R}^N).
\end{equation}
By \eqref{claimPS21}, we can assume that the sequence $\{u_n\}$ strongly converges in $L^{2_s^*}(\mathbb{R}^N)$. Then, since
\begin{equation*}
\|u_n-\tilde u\|_{\lambda_1}^2 = \left\langle \mathcal{J}_\nu'(u_n,v_n){\big|} (u_n-\tilde u,0) \right\rangle + o(1),
\end{equation*}
it follows that $u_n\to \tilde u$ in $\mathcal{D}^{1,2}(\mathbb{R}^N)$. Repeating the argument for $\{v_n\}$ we conclude \eqref{claimPS22}. 

Next, we prove both $\{u_n\}$ and $\{v_n\}$ strongly converge in $\mathcal{D}^{1,2}(\mathbb{R}^N)$. 

\textbf{Case 1}: The sequence $\{v_n\}$ strongly converges to $\tilde{v}$ in $\mathcal{D}^{1,2}(\mathbb{R}^N)$.\hfill\newline 
Let us prove that $\{u_n\}$ strongly converges to $\tilde{u}$ in $\mathcal{D}^{1,2}(\mathbb{R}^N)$. Assume, by contradiction, that none of its subsequences converge. If there is concentration at $0$ and $\infty$, by of \eqref{ineq:con0_a}, \eqref{ineq:con0}, \eqref{ineq:coninf_a}, \eqref{ineq:coninf} and \eqref{ineq:larga}, we get
\begin{equation*}
c\ge \frac{2-s}{N-s}\left[\mathcal{S}(\lambda_1,s)\right]^{\frac{N-s}{2-s}}=2\mathfrak{C}(\lambda_1,s)\ge \mathfrak{C}(\lambda_1,s)+\mathfrak{C}(\lambda_2,s).
\end{equation*}
This contradicts \eqref{PS1}, so the sequence $\{u_n\}$ concentrates only at $0$ or at $\infty$. Next, we prove $\tilde{v}\not \equiv 0$. Assume by contradiction that $\tilde{v}\equiv 0$, then $\tilde{u}\ge 0 $ and $\tilde{u}$ verifies \eqref{entire} with $j=1$. Thus, $\tilde{u}=z_\mu^{(1)}$ for some $\mu>0$ and, by \eqref{normcrit}, also  $\displaystyle\int_{\R^N}\frac{ \tilde{u}^{2_{s}^*}}{|x|^{s}} dx =[ \mathcal{S}(\lambda_1,s)]^{\frac{N-s}{2-s}}$. As $\{u_n\}$ concentrates at one point, by \eqref{ineq:larga} jointly with \eqref{ineq:con0_a}, \eqref{ineq:con0} and $\lambda_1\leq\lambda_2$ we conclude
\begin{equation}\label{eq:0}
\begin{split}
c&\ge \frac{2-s}{2(N-s)}\left( \int_{\R^N}\frac{ \tilde{u}^{2_{s}^*}}{|x|^{s}} dx +\left[\mathcal{S}(\lambda_1,s)\right]^{\frac{N-s}{2-s}}  \right)=2\mathfrak{C}(\lambda_1,s) \ge \mathfrak{C}(\lambda_1,s)+\mathfrak{C}(\lambda_2,s),
\end{split}
\end{equation}
in contradiction with \eqref{PS1}. If $\tilde{v}\equiv 0$ and $\tilde{u}\equiv 0$, then $u_n$ satisfies 
\begin{equation*}
-\Delta u_n - \lambda_1 \frac{u_n}{|x|^2}-\frac{u_n^{2_{s}^*-1}}{|x|^{s}}=o(1) \qquad \mbox{ in the dual space } \left( \mathcal{D}^{1,2} (\mathbb{R}^N)\right)',
\end{equation*}
and, as $\{u_n\}$ concentrates at most one point, 
\begin{equation}\label{eq:1}
c=\mathcal{J}_\nu(u_n,v_n)+o(1)=\frac{2-s}{2(N-s)} \int_{\R^N} \frac{ u_n^{2_{s}^*}}{|x|^{s}}dx +o(1)\to \frac{2-s}{2(N-s)} \rho_j.
\end{equation}
As the concentration takes place at $0$ or $\infty$, the sequence $\{u_n\}$ is a positive PS sequence for $\mathcal{J}_1(u)$, defined in \eqref{funct:Ji}. By \cite[Theorem 3.1]{LiGuoNiu} we get $\rho_j\ge l [\mathcal{S}(\lambda_1,s)]^{\frac{N-s}{2-s}}$ and, thus, from \eqref{eq:1} we conclude
\begin{equation*}
\begin{split}
c&=\mathcal{J}_\nu(u_n,v_n)+o(1)=\mathcal{J}_1(u_n)+o(1)\to\ell \mathfrak{C}(\lambda_1,s),
\end{split}
\end{equation*}
with $\ell \in \mathbb{N}$ contradicting \eqref{PS1}. As a consequence, $\tilde{v}\gneq 0$ in $\mathbb{R}^N$.
To continue, we prove that $u_n\weakto\tilde{u}$ in $\mathcal{D}^{1,2}(\R^N)$ such that $\tilde{u}\not \equiv 0$. Assuming $\tilde{u}=0$ and arguing as above we find $\tilde{v}=z_{\mu}^{(2)}$ so that, as in \eqref{eq:0}, we get a contradiction with \eqref{PS1}. Thus, $\tilde{u},\tilde{v} \gneq 0$. Next, taking $n\to+\infty$ in the equality
\begin{equation*}
\begin{split}
c=&\ \mathcal{J}_\nu(u_n,v_n)-\frac{1}{2}\left\langle \mathcal{J}_\nu'(u_n,v_n){\big|} (u_n,v_n) \right\rangle + o(1)\\
 =&\  \frac{2-s}{2(N-s)}\left( \int_{\R^N} \frac{ u_n^{2_{s}^*}}{|x|^{s}}dx+\int_{\R^N} \frac{ v_n^{2_{s}^*}}{|x|^{s}}dx\right) +\nu \left( \frac{\alpha+\beta-2}{2} \right)  \int_{\mathbb{R}^N} h(x)\frac{u_n^{\alpha}\, v_n^{\beta}}{|x|^s} dx + o(1),
\end{split}
\end{equation*}
we find, for $j \in \{0, \infty\}$,
\begin{equation}\label{eqlemmaPS10}
\begin{split}
c=&  \  \frac{2-s}{2(N-s)}\left( \int_{\R^N} \frac{ \tilde{u}^{2_{s}^*}}{|x|^{s}}dx+ \rho_j+  \int_{\R^N} \frac{ \tilde{v}^{2_{s}^*}}{|x|^{s}}dx\right)+\nu \left( \frac{\alpha+\beta-2}{2} \right)  \int_{\mathbb{R}^N} h(x)  \frac{\tilde{u}^{\alpha}\, \tilde{v}^{\beta}}{|x|^s} dx.
\end{split}
\end{equation}
On the other hand, as $\left\langle \mathcal{J}_\nu'(u_n,v_n){\big|} (\tilde{u},\tilde{v}) \right\rangle \to 0$ as $n\to\infty$, we also get
\begin{equation*}
\|(\tilde{u},\tilde{v}) \|_{\mathbb{D}}=  \int_{\mathbb{R}^N}\frac{ \tilde{u}^{2_{s}^*}}{|x|^{s}}dx+\int_{\mathbb{R}^N}\frac{ \tilde{v}^{2_{s}^*}}{|x|^{s}}dx  + \nu(\alpha+\beta) \int_{\R^N} h(x) \tilde{u}^\alpha \tilde{v}^\beta dx,
\end{equation*}
which is equivalent to say that $(\tilde{u},\tilde{v}) \in\mathcal{N}_\nu$. Then,
\begin{equation}\label{eqlemmaPS11}
\begin{split}
 \mathcal{J}_\nu(\tilde{u},\tilde{v})=&\ \frac{2-s}{2(N-s)}\left( \int_{\R^N} \frac{ \tilde{u}^{2_{s}^*}}{|x|^{s}}dx+ \int_{\R^N} \frac{ \tilde{v}^{2_{s}^*}}{|x|^{s}}dx\right) + \nu \left( \frac{\alpha+\beta-2}{2} \right)  \int_{\mathbb{R}^N} h\frac{\tilde{u}^{\alpha}\, \tilde{v}^{\beta}}{|x|^s} dx\\
 \le&\ c<\mathfrak{C}(\lambda_1,s)+\mathfrak{C}(\lambda_2,s).
 \end{split}
\end{equation}
Using \eqref{eqlemmaPS10}, \eqref{eqlemmaPS11}, \eqref{Nnueq} \eqref{ineq:con0_a}, \eqref{ineq:con0}, \eqref{ineq:larga} and \eqref{PS1} we get that
\begin{equation*}
\mathcal{J}_\nu(\tilde{u},\tilde{v})= c - \frac{2-s}{(N-s)}\rho_j
<\ \mathfrak{C}(\lambda_1,s)+\mathfrak{C}(\lambda_2,s) - \frac{2-s}{2(N-s)}\left[\mathcal{S}(\lambda_1,s)\right]^{\frac{N-s}{2-s}}=\mathfrak{C}(\lambda_2,s).
\end{equation*}
The above expression implies that
$$
\tilde{c}_\nu= \inf_{(u,v)\in\mathcal{N}_\nu} \mathcal{J}_\nu(u,v) < \mathfrak{C}(\lambda_2,s).
$$
However, for $\nu$ sufficiently small, Theorem~\ref{thm:groundstatesalphabeta} states that $\tilde c_\nu =  \mathfrak{C}(\lambda_2,s)$, which contradicts the former inequality. Thus, we have proved that $u_n\to \tilde{u}$ strongly in $\mathcal{D}^{1,2}(\R^N)$.

\textbf{Case 2}: The sequence $\{u_n\}$ strongly converges to $\tilde{u}$ in $\mathcal{D}^{1,2}(\mathbb{R}^N)$.\hfill\newline
We want to prove that $\{v_n\}$ strongly converges to $\tilde{v}$ in $\mathcal{D}^{1,2}(\mathbb{R}^N)$. By contradiction, suppose that none of its subsequences converge. We start by proving that $\tilde{u}\not \equiv 0$. Assuming that $\tilde{u} \equiv 0$ by contradiction, then $\{v_n\}$ is a PS sequence for the energy functional $\mathcal{J}_2$ defined in \eqref{funct:Ji} at energy level $c$. Since $v_n\weakto\tilde{v}$ in $\mathcal{D}^{1,2}(\R^N)$, then $\tilde{v}$ satisfies the entire problem \eqref{entire}. By \cite[Theorem 3.1]{LiGuoNiu} and \eqref{Jzeta}, one has
\begin{equation*}
\begin{split}
c& = \lim_{n\to+\infty} \mathcal{J}_2 (v_n) =  \mathcal{J}_2 (z_\mu^{(2)})+\ell\mathfrak{C}(\lambda_2,s),
\end{split}
\end{equation*}
for some $\ell \in \mathbb{N}$. If $\tilde{v}\equiv0$ then $c=\ell \mathfrak{C}(\lambda_2,s)$ in contradiction with \eqref{PS2}. If $\tilde{v}\not\equiv0$ then $\tilde{v}=z_\mu^{(2)}$ for some $\mu>0$.  Thus, $c=(\ell+1) \mathfrak{C}(\lambda_2,s)$ in contradiction with \eqref{PS2}. Therefore, we conclude that $\tilde{u} \not \equiv 0$. Conversely, if one assumes that $\tilde v \equiv 0$, then $\tilde{u}$ solves to \eqref{entire}, which implies that $u=z_\mu^{(1)}$ for some $\mu>0$. Therefore, we get
\begin{equation*}
\begin{split}
c&\ge \frac{2-s}{2(N-s)}\left( \int_{\R^N}\frac{ \tilde{u}^{2_{s}^*}}{|x|^{s}} dx +\left[\mathcal{S}(\lambda_2,s)\right]^{\frac{N-s}{2-s}}  \right)= \mathfrak{C}(\lambda_1,s)+\mathfrak{C}(\lambda_2,s),
\end{split}
\end{equation*}
contradicting \eqref{PS1}. Thus, $\tilde{u},\tilde{v}\not \equiv 0$. Using \eqref{eqlemmaPS10} and the assumption that $v_n$ does not strongly converge in $\mathcal{D}^{1,2}(\R^N)$, there exists at least one $k\in \{0, \infty\}$ such that $\overline{\rho}_k>0$, so that
\begin{equation*}
\begin{split}
c=&\ \frac{2-s}{2(N-s)}\left( \int_{\R^N}\frac{ \tilde{u}^{2_{s}^*}}{|x|^{s}} dx +\int_{\mathbb{R}^N}\frac{|\tilde{v}|^{2_{s}^*}}{|x|^{s}}dx +\overline{\rho}_0+\overline{\rho}_{\infty} \right) +\nu \left( \frac{\alpha+\beta-2}{2} \right)  \int_{\mathbb{R}^N} h(x)  \frac{\tilde{u}^{\alpha}\, \tilde{v}^{\beta}}{|x|^s} dx.
\end{split}
\end{equation*}
By using \eqref{eqlemmaPS11}, \eqref{ineq:con0_a}, \eqref{ineq:con0} and \eqref{PS1}, we get
\begin{equation}\label{eqlemmaPS12}
\begin{split}
\mathcal{J}_\nu(\tilde{u},\tilde{v})&= c - \frac{2-s}{2(N-s)}\left( \overline{\rho}_0 + \overline{\rho}_\infty\right) < \mathfrak{C}(\lambda_1,s)+\mathfrak{C}(\lambda_2,s) - \frac{2-s}{2(N-s)}\left[\mathcal{S}(\lambda_2,s)\right]^{\frac{N-s}{2-s}}\\
& =  \mathfrak{C}(\lambda_1,s). 
\end{split}
\end{equation}
By the first equation of \eqref{system:alphabeta} and the definition of the constant $\mathcal{S}(\lambda_1,s)$, it follows that
\begin{equation}\label{eqlemmaPS13}
\begin{split}
\int_{\R^N} \frac{\tilde{u}^{2_{s}^*}}{|x|^{s}} \, dx + \nu \int_{\mathbb{R}^N} h(x)\frac{\tilde{u}^{\alpha}\, \tilde{v}^{\beta}}{|x|^s} dx 
 &\ge \mathcal{S}(\lambda_1,s) \left(\int_{\R^N} \frac{\tilde{u}^{2_{s}^*}}{|x|^{s}} \, dx\right)^{2/2_{s}^*}.
\end{split}
\end{equation}
 Appliying H\"older's inequality as in \eqref{Hold1}, we have
\begin{equation}\label{Holder}
 \int_{\mathbb{R}^N} h(x)\frac{\tilde{u}^{\alpha}\tilde{v}^{\beta}}{|x|^s} dx
  \leq C(h)  \left( \int_{\mathbb{R}^N} \frac{\tilde{u}^{2^*_{s}}}{|x|^{s}} dx\right)^{\frac{\alpha}{2^*_{s}}}  \left( \int_{\mathbb{R}^N} \frac{\tilde{v}^{2^*_{s} }}{|x|^{s}} dx\right)^{\frac{\beta}{2^*_{s}}}. 
\end{equation}
Next, let us take $\displaystyle\sigma_1\vcentcolon= \int_{\R^N} \frac{\tilde{u}^{2_{s}^*}}{|x|^{s}} \, dx$. 
Then, by \eqref{eqlemmaPS11} and \eqref{Holder}, from \eqref{eqlemmaPS13} we get
\begin{equation}\label{eqlemmaPS14}
\sigma_1 + C \nu \sigma_1^{\frac{\alpha}{2_{s}^*}}\ge \mathcal{S}(\lambda_1,s) \sigma_1^\frac{2}{2_{s}^*}.
\end{equation}
On the other hand, since $\tilde{v}\not \equiv 0$, we have, for some $\tilde \varepsilon>0$,
\begin{equation*}
\frac{2-s}{2(N-s)}\int_{\mathbb{R}^N}\frac{ \tilde{v}^{2_{s}^*}}{|x|^{s}} \, dx\ge \tilde \varepsilon.
\end{equation*} 
Taking  $\varepsilon>0$ such that $\tilde \varepsilon\ge \varepsilon \mathfrak{C}(\lambda_1,s)$, by \eqref{eqlemmaPS14} and Lemma~\ref{algelemma}, there exists $\tilde \nu>0$ such that
\begin{equation*}
\sigma_1\ge (1-\varepsilon)[\mathcal{S}(\lambda_1,s)]^{\frac{N-s}{2-s}} \qquad \mbox{ for any } 0<\nu\leq \tilde{\nu}.
\end{equation*}
From previous estimates and \eqref{eqlemmaPS11}, we obtain that
\begin{equation*}
\mathcal{J}_\nu(\tilde{u},\tilde{v}) \ge (1-\varepsilon)\frac{2-s}{2(N-s)}[\mathcal{S}(\lambda_1,s)]^{\frac{N-s}{2-s}} + \tilde{\varepsilon} =\mathfrak{C}(\lambda_1,s),
\end{equation*}
which gives us a contradiction with \eqref{eqlemmaPS12}. Therefore, $v_n\to \tilde{v}$ strongly in $\mathcal{D}^{1,2}(\R^N)$.
\end{proof}
In a similar way we can establish the following.
\begin{lemma}\label{lemmaPS1a}
Assume that $\alpha+\beta<2_{s}^*$, $\beta\ge2$, and $\lambda_1\ge\lambda_2$. Then, there exists $\tilde{\nu}>0$ such that, if $0<\nu\leq\tilde{\nu}$ and $\{(u_n,v_n)\} \subset \mathbb{D}$ is a PS sequence for $\mathcal{J}^+_\nu$ at level $c\in\mathbb{R}$ such that
\begin{equation}\label{PS2a}
\mathfrak{C}(\lambda_2,s)<c<\mathfrak{C}(\lambda_1,s)+\mathfrak{C}(\lambda_2,s),
\end{equation}
and
\begin{equation}\label{PS2aa}
c\neq \ell \mathfrak{C}(\lambda_1,s) \quad \mbox{ for every } \ell \in \mathbb{N}\setminus \{0\},
\end{equation}
then $(u_n,v_n)\to(\tilde{u},\tilde{v}) \in \mathbb{D}$ up to subsequence.
\end{lemma}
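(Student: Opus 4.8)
The plan is to repeat, essentially word for word, the argument of Lemma~\ref{lemmaPS1}, carrying out throughout the substitution $u\leftrightarrow v$, $\alpha\leftrightarrow\beta$, $\lambda_1\leftrightarrow\lambda_2$, $1\leftrightarrow2$. Since now $\beta\ge2$ and $\lambda_1\ge\lambda_2$, one has $\mathfrak{C}(\lambda_1,s)\le\mathfrak{C}(\lambda_2,s)$, so the \emph{second} component is the distinguished one, the larger semi-trivial level is $\mathfrak{C}(\lambda_2,s)$ (the lower endpoint in \eqref{PS2a}), and the excluded levels in \eqref{PS2aa} are the positive integer multiples of the \emph{smaller} level $\mathfrak{C}(\lambda_1,s)$. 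I would begin exactly as there: by Lemma~\ref{lemmaPS0} the PS sequence is bounded in $\mathbb{D}$, so up to a subsequence $(u_n,v_n)\rightharpoonup(\tilde u,\tilde v)$; testing $(\mathcal{J}_\nu^+)'(u_n,v_n)$ against $(u_n^-,0)$ and $(0,v_n^-)$ forces $u_n^-,v_n^-\to0$ in $\mathcal{D}^{1,2}(\mathbb{R}^N)$, so we may treat $\{(u_n,v_n)\}$ as a non-negative PS sequence for $\mathcal{J}_\nu$; and the concentration--compactness principle, as in Lemma~\ref{lemmaPS2}, produces the concentration measures at $0$ and at $\infty$ together with the relations \eqref{ineq:con0_a}--\eqref{ineq:coninf}.

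Next I would prove that at least one of $\{u_n\}$, $\{v_n\}$ converges strongly in $L^{2_s^*}(\mathbb{R}^N)$, hence (testing $\mathcal{J}_\nu'(u_n,v_n)$ with $(u_n-\tilde u,0)$ or $(0,v_n-\tilde v)$) in $\mathcal{D}^{1,2}(\mathbb{R}^N)$: otherwise $\rho_j>0$ and $\overline\rho_k>0$ for some $j,k\in\{0,\infty\}$, and feeding \eqref{ineq:con0}--\eqref{ineq:coninf} into \eqref{ineq:larga} gives $c\ge\mathfrak{C}(\lambda_1,s)+\mathfrak{C}(\lambda_2,s)$, against \eqref{PS2a}. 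To upgrade this to strong convergence of \emph{both} components I would split into two cases. In Case~1, $\{u_n\}$ converges in $\mathcal{D}^{1,2}(\mathbb{R}^N)$ and I must show $\{v_n\}$ does too: assuming not, $\{v_n\}$ concentrates, and it cannot concentrate both at $0$ and at $\infty$ (that forces $c\ge2\mathfrak{C}(\lambda_2,s)\ge\mathfrak{C}(\lambda_1,s)+\mathfrak{C}(\lambda_2,s)$, using $\lambda_1\ge\lambda_2$); after ruling out $\tilde u\equiv0$ (else $\tilde v=z_\mu^{(2)}$ and $c\ge2\mathfrak{C}(\lambda_2,s)$, or $\tilde v\equiv0$ and $\{v_n\}$ is a PS sequence for $\mathcal{J}_2$ concentrated at one point so $c=\ell\mathfrak{C}(\lambda_2,s)$ by \cite[Theorem~3.1]{LiGuoNiu}, both contradicting \eqref{PS2a}) and then $\tilde v\equiv0$ (else $\tilde u=z_\mu^{(1)}$ and $c\ge\mathfrak{C}(\lambda_1,s)+\mathfrak{C}(\lambda_2,s)$), I get $\tilde u,\tilde v\gneq0$, so $(\tilde u,\tilde v)\in\mathcal{N}_\nu$ and, by the analogues of \eqref{eqlemmaPS10}--\eqref{eqlemmaPS11} with $\overline\rho_j\ge[\mathcal{S}(\lambda_2,s)]^{\frac{N-s}{2-s}}$, $\mathcal{J}_\nu(\tilde u,\tilde v)=c-\tfrac{2-s}{2(N-s)}\overline\rho_j<\mathfrak{C}(\lambda_1,s)$; this yields $\tilde c_\nu<\mathfrak{C}(\lambda_1,s)$, contradicting Theorem~\ref{thm:groundstatesalphabeta}(ii), which for $\nu$ small gives $\tilde c_\nu=\mathfrak{C}(\lambda_1,s)$.

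In Case~2, $\{v_n\}$ converges in $\mathcal{D}^{1,2}(\mathbb{R}^N)$ and I must show $\{u_n\}$ does; by contradiction $\{u_n\}$ concentrates, at $0$ only or at $\infty$ only. I would first establish $\tilde v\not\equiv0$: otherwise $\{u_n\}$ is a PS sequence for $\mathcal{J}_1$ with $u_n\rightharpoonup\tilde u$, so by \cite[Theorem~3.1]{LiGuoNiu} and \eqref{Jzeta} one gets $c=\ell\mathfrak{C}(\lambda_1,s)$ for some $\ell\in\mathbb{N}\setminus\{0\}$ (whether $\tilde u\equiv0$ or $\tilde u=z_\mu^{(1)}$), against \eqref{PS2aa}; then $\tilde u\not\equiv0$ (otherwise $\tilde v=z_\mu^{(2)}$ and, since $\{u_n\}$ concentrates, $c\ge\mathfrak{C}(\lambda_1,s)+\mathfrak{C}(\lambda_2,s)$). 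With $\tilde u,\tilde v\gneq0$ one has $(\tilde u,\tilde v)\in\mathcal{N}_\nu$, and the analogue of \eqref{eqlemmaPS10}--\eqref{eqlemmaPS11} together with $\rho_0+\rho_\infty\ge[\mathcal{S}(\lambda_1,s)]^{\frac{N-s}{2-s}}$ gives the upper bound $\mathcal{J}_\nu(\tilde u,\tilde v)=c-\tfrac{2-s}{2(N-s)}(\rho_0+\rho_\infty)<\mathfrak{C}(\lambda_2,s)$.

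The hard part will be contradicting this last inequality, and it is precisely here that the hypothesis $\beta\ge2$ is needed (it is the exponent of the component whose critical mass must be shown not to vanish). Testing the second equation of \eqref{system:alphabeta} against $\tilde v$ and using the best constant $\mathcal{S}(\lambda_2,s)$ gives, with $\sigma_2:=\int_{\mathbb{R}^N}\tilde v^{2_s^*}/|x|^s\,dx$, the inequality $\sigma_2+\nu\beta\int_{\mathbb{R}^N}h\,\tilde u^\alpha\tilde v^\beta/|x|^s\,dx\ge\mathcal{S}(\lambda_2,s)\sigma_2^{2/2_s^*}$; estimating the coupling term by H\"older with exponents $p=2_s^*/\beta$ and $q=2_s^*/(2_s^*-\beta)$ as in \eqref{Hold1} --- so that $\alpha q\le2_s^*$ by \eqref{alphabeta}, whence $\bigl(\int_{\mathbb{R}^N}\tilde u^{\alpha q}/|x|^s\,dx\bigr)^{1/q}\le C\|\tilde u\|_{\lambda_1}^\alpha$ is a fixed constant --- yields $\sigma_2+C\nu\sigma_2^{\beta/2_s^*}\ge\mathcal{S}(\lambda_2,s)\sigma_2^{2/2_s^*}$. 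Now Lemma~\ref{algelemma} with $\theta=\beta\ge2$ applies: for $\nu$ small, $\sigma_2\ge(1-\varepsilon)[\mathcal{S}(\lambda_2,s)]^{\frac{N-s}{2-s}}$; since $\tilde u\gneq0$ contributes a fixed positive amount $\tilde\varepsilon$ to the energy, choosing $\varepsilon$ with $\tilde\varepsilon\ge\varepsilon\,\mathfrak{C}(\lambda_2,s)$ and using \eqref{Nnueq} forces $\mathcal{J}_\nu(\tilde u,\tilde v)\ge\mathfrak{C}(\lambda_2,s)$, the contradiction sought. Taking $\tilde\nu$ to be the smallest of the thresholds produced by Lemma~\ref{algelemma} and by Theorem~\ref{thm:groundstatesalphabeta}(ii) concludes the argument; the only genuinely new point relative to Lemma~\ref{lemmaPS1} is the verification that $\alpha q\le2_s^*$ for $q=2_s^*/(2_s^*-\beta)$, which is exactly the restriction $\alpha+\beta\le2_s^*$ of \eqref{alphabeta}, so no additional hypothesis is required.
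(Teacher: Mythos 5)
Your proposal is correct and is exactly what the paper intends: the paper states Lemma~\ref{lemmaPS1a} with the remark that it follows ``in a similar way'' from Lemma~\ref{lemmaPS1}, and your write-up is precisely that mirrored argument (swapping $u\leftrightarrow v$, $\alpha\leftrightarrow\beta$, $\lambda_1\leftrightarrow\lambda_2$), with the roles of \eqref{PS2a}/\eqref{PS2aa}, Theorem~\ref{thm:groundstatesalphabeta}$(ii)$, and Lemma~\ref{algelemma} applied to $\sigma_2$ with $\theta=\beta\ge2$ all matched correctly. The only blemishes are inessential and inherited from or harmless relative to the paper's own Lemma~\ref{lemmaPS1}: the unused assertion in your Case~2 that concentration occurs at only one of $0,\infty$ is not actually justified there (nor needed, since $\rho_0+\rho_\infty\ge[\mathcal{S}(\lambda_1,s)]^{\frac{N-s}{2-s}}$ suffices), and the final energy lower bound should cite \eqref{Nnueq2} (equivalently the analogue of \eqref{eqlemmaPS11}) rather than \eqref{Nnueq}.
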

\subsection{Critical range $\alpha+\beta=2_s^*$}\hfill\newline
To find minimizing and Mountain--Pass-type solutions in the critical regime we need to extend Lemmas \ref{lemmaPS2} and \ref{lemmaPS1} to the critical regime. This is done in next Lemma \ref{lemcritic}. 
\begin{lemma}\label{lemcritic} 
Assume that  $\alpha+\beta=2_{s}^*$ and hypothesis \eqref{H} holds. Let $\{(u_n,v_n)\} \subset \mathbb{D}$ be a PS sequence for $\mathcal{J}_\nu$ at level $c\in\mathbb{R}$ such that
\begin{itemize}
\item[i)] either $c$ satisfies \eqref{hyplemmaPS2},
\item[ii)] or $c$ satisfies \eqref{PS1} and \eqref{PS2} if $\alpha\ge 2$ and  $\lambda_1\leq\lambda_2$,
\item[iii)] or $c$ satisfies \eqref{PS2a} and \eqref{PS2aa} if $\beta\ge 2$ and  $\lambda_1\ge\lambda_2$.
\end{itemize}
Then, there exists $\tilde{\nu}>0$ such that, for every $0<\nu\leq \tilde{\nu}$, the sequence $(u_n,v_n)\to(\tilde{u},\tilde{v}) \in \mathbb{D}$ up to a subsequence.
\end{lemma}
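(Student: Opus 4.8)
The plan is to run, almost verbatim, the arguments of Lemmas~\ref{lemmaPS2}, \ref{lemmaPS1} and \ref{lemmaPS1a} according to whether we are in case i), ii) or iii), the only genuinely new ingredient being that hypothesis \eqref{H} prevents the coupling term from concentrating at the origin or at infinity. As in those proofs, by Lemma~\ref{lemmaPS0} (whose argument does not use $\alpha+\beta<2_s^*$) the PS sequence is bounded in $\mathbb{D}$; we pass to a subsequence $(u_n,v_n)\rightharpoonup(\tilde u,\tilde v)$ and apply the concentration--compactness principle to obtain the numbers and measures of \eqref{con-comp}, the only possible concentration points of $\frac{|u_n|^{2_s^*}}{|x|^s}$ and $\frac{|v_n|^{2_s^*}}{|x|^s}$ being $x=0$ and $x=\infty$, exactly as in Lemma~\ref{lemmaPS2} (here $0<s<2$ gives $2_s^*<2^*$). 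In cases ii) and iii) one first works with $\mathcal{J}_\nu^+$ and discards the negative parts as in Lemma~\ref{lemmaPS1}.

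The crucial point is that the coupling term does not concentrate. Since $\alpha+\beta=2_s^*$, splitting $\frac{|u|^\alpha|v|^\beta}{|x|^s}=\frac{|u|^\alpha}{|x|^{s\alpha/2_s^*}}\cdot\frac{|v|^\beta}{|x|^{s\beta/2_s^*}}$ and applying H\"older's inequality with the conjugate exponents $2_s^*/\alpha$ and $2_s^*/\beta$ yields, for every $\varepsilon,R>0$,
\[
\int_{B_\varepsilon(0)}h\,\frac{|u_n|^\alpha|v_n|^\beta}{|x|^s}\,dx\le\Big(\sup_{B_\varepsilon(0)}|h|\Big)\Big(\int_{\R^N}\frac{|u_n|^{2_s^*}}{|x|^s}dx\Big)^{\frac{\alpha}{2_s^*}}\Big(\int_{\R^N}\frac{|v_n|^{2_s^*}}{|x|^s}dx\Big)^{\frac{\beta}{2_s^*}}\le C\sup_{B_\varepsilon(0)}|h|,
\]
and likewise $\int_{|x|>R}h\,\frac{|u_n|^\alpha|v_n|^\beta}{|x|^s}dx\le C\sup_{|x|>R}|h|$, with $C$ independent of $n$ by the $\mathbb{D}$-bound and \eqref{hardy_sobolev_inequality}. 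Hence, when we test $\mathcal{J}_\nu'(u_n,v_n)$ against $(u_n\varphi_\varepsilon,0)$ and $(0,v_n\varphi_\varepsilon)$ with the cut-offs \eqref{cutoff} and \eqref{cutoffinfi}, the coupling contributions near $0$ and near $\infty$ are $o_\varepsilon(1)$ and $o_R(1)$ respectively, because $h$ is continuous at $0$ and at $\infty$ with $h(0)=\lim_{|x|\to+\infty}h(x)=0$ by \eqref{H}. Therefore the local inequalities $\mu_0-\lambda_1\eta_0-\rho_0\le0$, $\mu_\infty-\lambda_1\eta_\infty-\rho_\infty\le0$ and their barred analogues hold verbatim, and so do \eqref{ineq:con0_a}--\eqref{ineq:coninf}. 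Splitting $\R^N$ into $B_\varepsilon(0)$, the annulus $\{\varepsilon\le|x|\le R\}$ and $\{|x|>R\}$, and combining these bounds with the Rellich compactness $u_n\to\tilde u$, $v_n\to\tilde v$ in $L^{2_s^*}_{\mathrm{loc}}(\R^N\setminus\{0\})$ on the annulus (again $2_s^*<2^*$), one also obtains $\int_{\R^N}h\,\frac{|u_n|^\alpha|v_n|^\beta}{|x|^s}dx\to\int_{\R^N}h\,\frac{|\tilde u|^\alpha|\tilde v|^\beta}{|x|^s}dx$.

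With these facts the three cases reduce to the already-proven subcritical statements. In case i) we repeat the proof of Lemma~\ref{lemmaPS2}: the energy identity \eqref{eq:limit2}--\eqref{ineq:larga} together with \eqref{ineq:con0}, \eqref{ineq:coninf} and $c<\min\{\mathfrak{C}(\lambda_1,s),\mathfrak{C}(\lambda_2,s)\}$ force $\rho_0=\rho_\infty=\overline\rho_0=\overline\rho_\infty=0$, so $u_n\to\tilde u$ and $v_n\to\tilde v$ strongly in the weighted space $L^{2_s^*}(\R^N,|x|^{-s}dx)$, and then $\|(u_n-\tilde u,v_n-\tilde v)\|_{\mathbb{D}}^2=\langle\mathcal{J}_\nu'(u_n,v_n){\big|}(u_n-\tilde u,v_n-\tilde v)\rangle+o(1)\to0$ (no restriction on $\nu$ is needed here). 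In case ii) we reproduce the proof of Lemma~\ref{lemmaPS1} line by line --- the dichotomy \eqref{claimPS21}--\eqref{claimPS22}, then Case~1 (invoking Theorem~\ref{thm:groundstatesalphabeta}, valid for $\nu$ small) and Case~2 (the H\"older bound \eqref{Holder}, the splitting of the critical mass $\sigma_1$ in \eqref{eqlemmaPS14}, and Lemma~\ref{algelemma} with $\theta=\alpha\ge2$) --- the only new remark being that now the H\"older exponent $q=\frac{2_s^*}{2_s^*-\alpha}=\frac{2_s^*}{\beta}$ satisfies $\beta q=2_s^*$, so $\big(\int_{\R^N}\frac{|\tilde v|^{\beta q}}{|x|^s}dx\big)^{1/q}$ is still finite by \eqref{hardy_sobolev_inequality}; case iii) is the symmetric statement coming from Lemma~\ref{lemmaPS1a}, with $\theta=\beta\ge2$ and $\lambda_1\ge\lambda_2$. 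Choosing $\tilde\nu$ to be the smallest of the thresholds produced in ii) and iii) (and arbitrary in i)) completes the proof. The whole difficulty thus lies in the non-concentration of the coupling term --- this is precisely where \eqref{H} enters --- and the assumptions $\alpha\ge2$ (resp. $\beta\ge2$) in cases ii) and iii) are dictated by the requirement $\theta\ge2$ in Lemma~\ref{algelemma}.
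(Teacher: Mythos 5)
Your proposal is correct and follows essentially the same route as the paper: the key new ingredient in both is that, since $\alpha+\beta=2_s^*$, H\"older's inequality with exponents $2_s^*/\alpha$ and $2_s^*/\beta$ together with hypothesis \eqref{H} (continuity of $h$ near $0$ and $\infty$ with $h(0)=\lim_{|x|\to\infty}h(x)=0$) shows the coupling term contributes nothing at the only possible concentration points $0$ and $\infty$, so the inequalities \eqref{ineq:con0_a}--\eqref{ineq:coninf} survive and the argument reduces to Lemmas~\ref{lemmaPS2}, \ref{lemmaPS1} and \ref{lemmaPS1a}. Your bound via $\sup_{B_\varepsilon(0)}|h|$ (resp. $\sup_{|x|>R}|h|$) is just a slightly more direct version of the paper's passage to the limit measures with $h(0)=0$, and your remarks on $\beta q=2_s^*$ and on where the smallness of $\nu$ is actually needed are consistent with the paper.
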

\begin{proof}
As in the proof of Lemmas \ref{lemmaPS2} and \ref{lemmaPS1}, in order to avoid concentration at the origin, it is enough to prove (see \eqref{eq:concompact1}) that
\begin{equation}\label{radial:at0}
\lim\limits_{\varepsilon\to0}\limsup\limits_{n\to+\infty}\int_{\mathbb{R}^N}h(x)\frac{|u_n|^\alpha |v_n|^{\beta}}{|x|^{s}}\varphi_{0,\varepsilon}(x)dx=0,
\end{equation}
for $\varphi_{0,\varepsilon}$ a smooth cut-off function centered at 0 defined as in \eqref{cutoff}. Analogously, in order to avoid concentration at $\infty$, we have to show that
\begin{equation}\label{radial:atinf}
\lim\limits_{R\to+\infty}\limsup\limits_{n\to+\infty}\int_{|x|>R}h(x)\frac{|u_n|^\alpha |v_n|^{\beta}}{|x|^{s}}\varphi_{\infty,\varepsilon}(x)dx=0,
\end{equation}
where, $\varphi_{\infty,\varepsilon}$ is a cut-off function supported near $\infty$, introduced in \eqref{cutoffinfi}. Let us prove \eqref{radial:at0}. Applying H\"older's inequality as in \eqref{Hold1} and using $\alpha+\beta=2_s^*$, we get
\begin{equation}\label{eq:Hof}
\begin{split}
\int_{\mathbb{R}^N} h\frac{|u_n|^{\alpha}|v_n|^{\beta}}{|x|^s} \varphi_{0,\varepsilon}dx
& \leq\left(\int_{\mathbb{R}^N}h\frac{|u_n|^{2_{s}^*}}{|x|^{s}}\varphi_{0,\varepsilon}dx\right)^{\frac{\alpha}{2_{s}^*}}\left(\int_{\mathbb{R}^N}h\frac{|v_n|^{2_{s}^*}}{|x|^{s}}\varphi_{0,\varepsilon}dx\right)^{\frac{\beta}{2_{s}^*}}.
\end{split}
\end{equation}
 Because of \eqref{con-comp} and \eqref{H} we get
\begin{equation*}
\lim\limits_{n\to+\infty}\int_{\mathbb{R}^N}h(x)\frac{|u_n|^{2_{s}^*}}{|x|^{s}}\varphi_{0,\varepsilon}dx=\int_{\mathbb{R}^N}h(x)\frac{|\tilde{u}|^{2_{s}^*}}{|x|^{s}}\varphi_{0,\varepsilon}\,dx+\rho_0h(0)
\leq\int_{|x|\leq\varepsilon}h(x)\frac{|\tilde{u}|^{2_{s}^*}}{|x|^{s}} \, dx,
\end{equation*}
and 
\begin{equation*}
\lim\limits_{n\to+\infty}\int_{\mathbb{R}^N}h(x)\frac{|v_n|^{2_{s}^*}}{|x|^{s}}\varphi_{0,\varepsilon}dx=\int_{\mathbb{R}^N}h(x)\frac{|\tilde{v}|^{2_{s}^*}}{|x|^{s}}\varphi_{0,\varepsilon}\,dx+\overline{\rho}_0h(0)\leq\int_{|x|\leq\varepsilon}h(x)\frac{|\tilde{v}|^{2_{s}^*}}{|x|^{s}} \, dx,
\end{equation*}
so \eqref{radial:at0} follows. Since $\lim\limits_{|x|\to+\infty}h(x)=0$, the proof of \eqref{radial:atinf} follows similarly.
\end{proof}
\section{Proofs of main Results}\label{section:main}
Once we have ensured the PS condition under a {\it quantization} of the energy levels, we can prove the main results concerning the existence of bound and ground states to \eqref{system:alphabeta}.
\begin{proof}[Proof of Theorem~\ref{thm:nugrande}]
Fixed $(u,v)\in\mathbb{D}\setminus \{(0,0)\}$, we can take $t$ such that $(tu,tv) \in \mathcal{N}_\nu$ with $t$ satisfying \eqref{normH}. Since $\alpha+\beta>2$, then $t=t_\nu \to 0$ as $\nu\to+\infty$. Indeed, by \eqref{normH}, we have
\begin{equation*}
\lim_{\nu \to+\infty} t_\nu^{\alpha+\beta-2} \nu = \dfrac{\|(u,v)\|_\mathbb{D}^2}{ \displaystyle \int_{\mathbb{R}^N} h(x) \dfrac{|u|^{\alpha} |v|^{\beta}}{|x|^s}  \, dx}.
\end{equation*}
This implies that energy of $(t_\nu u ,t_\nu v)$ is 
\begin{equation*}
\mathcal{J}_\nu(t_\nu u ,t_\nu v)=\left(\frac{1}{2}-\frac{1}{\alpha+\beta} +o(1)  \right) t_\nu^2 \|(u,v)\|^2_\mathbb{D}.
\end{equation*}
Next, we can derive that
\begin{equation}\label{minimumlevel}
\tilde{c}_\nu=\inf_{(u,v) \in \mathcal{N}_\nu} \mathcal{J}_\nu (u,v)< \min \{\mathcal{J}_\nu(z_\mu^{(1)},0),\mathcal{J}_\nu(0,z_\mu^{(2)}) \}=\min \{ \mathfrak{C}(\lambda_1,s), \mathfrak{C}(\lambda_2,s)\},
\end{equation}
for some $\nu>\overline{\nu}$ where $\overline{\nu}$ large enough. If $\alpha+\beta < 2^*_{s}$, the existence of $(\tilde{u},\tilde{v}) \in \mathbb{D}$ such that $\mathcal{J}_\nu(\tilde{u},\tilde{v})=\tilde{c}_\nu$ follows by Lemma~\ref{lemmaPS2}. Concerning the positivity of the solution, notice that
\begin{equation*}
\mathcal{J}_\nu(|\tilde{u}|,|\tilde{v}|)= \mathcal{J}_\nu(\tilde{u},\tilde{v}),
\end{equation*}
which allows us to suppose that $\tilde{u}\ge 0$ and $\tilde{v}\ge 0$ in $\mathbb{R}^N$. By using classical regularity arguments, $\tilde{u}$ and $\tilde{v}$ are indeed smooth in $\R^N\setminus\{0\}$. Moreover, $\tilde{u}\not \equiv 0$ and $\tilde{v}\not \equiv 0$. Otherwise, if $\tilde{u}\equiv 0$, then $\tilde{v}\ge 0 $ and $\tilde{v}$ verifies \eqref{entire}, so $\tilde{v}=z_\mu^{(2)}$, which contradicts the energy level assumption \eqref{minimumlevel}. Analogously, we deduce $\tilde{v}\not\equiv 0$. Next, by applying the maximum principle in $\R^N\setminus\{0\}$, one obtains $(\tilde{u},\tilde{v}) \in \mathcal{N}_\nu$ such that $\tilde{u}> 0$ and $\tilde{v}> 0$ in $\mathbb{R}^N\setminus\{0\}$, completing the thesis of this theorem. If $\alpha+\beta=2^*_{s}$, the same conclusion follows by using Lemma~\ref{lemcritic}.
\end{proof}

\begin{proof}[Proof of Theorem~\ref{thm:lambdaground}]
We shall show the existence of a positive ground state by supposing the alternative $i)$. The proof under assumption $ii)$ is analogous. Due to such hypotheses, Proposition~\ref{thmsemitrivialalphabeta} guarantees that  $(z_\mu^{(1)},0)$ is a saddle point of $\mathcal{J}_\nu$ on $\mathcal{N}_\nu$. Moreover,
$$
\tilde{c}_\nu<\mathcal{J}_\nu(z_\mu^{(1)},0)=\min \{\mathfrak{C}(\lambda_1,s),\mathfrak{C}(\lambda_2,s)\},
$$
where $\tilde{c}_\nu $ defined in \eqref{ctilde}. If $\alpha+\beta<2^*_s$, Lemma~\ref{lemmaPS2} ensures the existence  of $(\tilde{u},\tilde{v})\in\mathcal{N}_\nu$ with $\tilde{c}_\nu=\mathcal{J}_\nu(\tilde{u},\tilde{v})$. Reasoning as in the above theorem, we obtain that $(\tilde{u},\tilde{v})$ is a positive ground state of \eqref{system:alphabeta}. If $\alpha+\beta=2^*_s$, the result follows by using  Lemma~\ref{lemcritic}, so there exists $(\tilde{u},\tilde{v})$ of  \eqref{system:alphabeta}. Indeed, they are positive ground states for \eqref{system:alphabeta}.
\end{proof}

\begin{proof}[Proof of Theorem~\ref{thm:groundstatesalphabeta}]
Let us start by proving $i)$. In virtue of Proposition~\ref{thmsemitrivialalphabeta}, $(0,z_\mu^{(2)})$ is a local minimum for $\nu$ small enough. Now, arguing by contradiction, suppose the existence of $\{\nu_n\} \searrow 0$ such that $\tilde{c}_{\nu_n} <  \mathcal{J}_{\nu_n} (0,z_\mu^{(2)})$. Moreover,
\begin{equation}\label{groundstates1}
\tilde{c}_{\nu_n}< \min \{ \mathfrak{C}(\lambda_1,s), \mathfrak{C}(\lambda_2,s) \}= \mathfrak{C}(\lambda_2,s),
\end{equation}
where $\tilde{c}_{\nu}$ given in \eqref{ctilde} with $\nu=\nu_n$. If $\alpha+\beta<2^*_s$, the PS condition holds at level $\tilde{c}_{\nu_n}$ by Lemma~\ref{lemmaPS2}. If $\alpha+\beta=2^*_s$, apply Lemma \ref{lemcritic} for $\nu$ small to infer the same thesis.

Then, there exists $(\tilde{u}_n,\tilde{v}_n) \in \mathbb{D}$ with $\tilde{c}_{\nu_n}=\mathcal{J}_{\nu_n} (\tilde{u}_n,\tilde{v}_n)$. Due to $\mathcal{J}_{\nu_n} (\tilde{u}_n,\tilde{v}_n)=\mathcal{J}_{\nu_n} (|\tilde{u}_n|,|\tilde{v}_n|)$, one can assume that $\tilde{u}_n \ge 0$ and $\tilde{v}_n \ge 0$. Moreover, as we proved in previous results, we conclude that actually $(\tilde{u}_n, \tilde{v}_n)$ is strictly positive in $\mathbb{R}^N\setminus \{0\}$. Now, let us take
\begin{equation*}
\sigma_{1,n}\vcentcolon=\int_{\mathbb{R}^N} \dfrac{\tilde{u}_n^{2^*_{s}}}{|x|^{s}} \, dx \qquad \mbox{ and } \qquad \sigma_{2,n}\vcentcolon=\int_{\mathbb{R}^N} \dfrac{\tilde{v}_n^{2^*_{s}}}{|x|^{s}}\, dx .
\end{equation*}
Note that, by \eqref{Nnueq2}, we have
\begin{equation}\label{groundstates3}
\tilde{c}_{\nu_n} = \mathcal{J}_{\nu_n} (\tilde{u}_n,\tilde{v}_n)= \frac{2-s}{2(N-s)} \left( \sigma_{1,n} + \sigma_{2,n}\right) + \nu_n \left( \frac{\alpha+\beta-2}{2}  \right)\int_{\mathbb{R}^N} h(x)  \,  \dfrac{\tilde{u}_n^{\alpha} \,  \tilde{v}_n^\beta } {|x|^{s}} \, dx  .
\end{equation}
Combining \eqref{groundstates1} and \eqref{groundstates3}, we deduce that
\begin{equation}\label{groundstates4}
\frac{2-s}{2(N-s)}  \left(\sigma_{1,n} + \sigma_{2,n}\right)<\mathfrak{C}(\lambda_2,s)=\frac{2-s}{2(N-s)} \left[\mathcal{S}(\lambda_2,s)\right]^{\frac{N-s}{2-s}}.
\end{equation}
Since $(\tilde{u}_n,\tilde{v}_n)$ solves \eqref{system:alphabeta}, by using the first equation of \eqref{system:alphabeta} together with  \eqref{Slambda}, one gets
\begin{equation}\label{groundstates45}
\mathcal{S}(\lambda_1,s) (\sigma_{1,n})^{\frac{N-2}{N-s}} \leq \sigma_{1,n} +  \nu_n \alpha  \int_{\mathbb{R}^N} h(x)  \dfrac{\tilde{u}_n^{\alpha} \,  \tilde{v}_n^\beta } {|x|^{s}} \, dx  .
\end{equation}
Applying H\"older's inequality as in \eqref{eq:Hof}, we obtain that
\begin{equation*}
\int_{\mathbb{R}^N} h(x)  \,  \dfrac{\tilde{u}_n^{\alpha} \,  \tilde{v}_n^\beta } {|x|^{s}}   \, dx  \leq C(h)  \left( \int_{\mathbb{R}^N} \dfrac{\tilde{u}_n^{2^*_s}}{|x|^{s}}  \, dx   \right)^{\frac{\alpha}{2^*_{s}}}\left( \int_{\mathbb{R}^N} \dfrac{\tilde{v}_n^{2^*_s}}{|x|^{s}}  \, dx    \right)^{\frac{\beta}{2^*_s}}.
\end{equation*}
and, thus, 
\begin{equation*}
\int_{\mathbb{R}^N} h(x)  \,  \dfrac{\tilde{u}_n^{\alpha} \,  \tilde{v}_n^\beta } {|x|^{s}}   \, dx    \leq C_2(h)(\sigma_{1,n})^{\frac{\alpha}{2}\frac{N-2}{N-s}} [\mathcal{S}(\lambda_2,s)]^{\beta \frac{N-2}{2(2-s)}}.
\end{equation*}
We conclude then
\begin{equation*}
\mathcal{S}(\lambda_1,s) (\sigma_{1,n})^{\frac{N-2}{N-s}} < \sigma_{1,n} + C  \nu_n\alpha   \|h\|_{L^{\infty}} (\sigma_{1,n})^{\frac{\alpha}{2}\frac{N-2}{N-s}} [\mathcal{S}(\lambda_2,s)]^{\beta \frac{N-2}{2(2-s)}} .
\end{equation*}
Since $\mathfrak{C}(\lambda_1,s)>\mathfrak{C}(\lambda_2,s)$, there exists $\varepsilon>0$ such that
\begin{equation}\label{groundstates6}
(1-\varepsilon) \mathcal{S}(\lambda_1,s)]^{\frac{N-s}{2-s}} \ge  [\mathcal{S}(\lambda_2,s)]^{\frac{N-s}{2-s}}.
\end{equation}
By applying Lemma~\ref{algelemma} with $\sigma=\sigma_{1,n}$, there exists $\tilde{\nu}=\tilde{\nu}(\varepsilon)>0$ with
\begin{equation*}
\sigma_{1,n}> (1-\varepsilon) [\mathcal{S}(\lambda_1,s)]^{\frac{N-s}{2-s}} \qquad \mbox{ for any } 0<\nu_n<\tilde{\nu}.
\end{equation*}
The above inequality together with \eqref{groundstates6} implies that $\dfrac{2-s}{2(N-s)}\sigma_{1,n}>\mathfrak{C}(\lambda_2,s)$, which clearly contradicts \eqref{groundstates4}. Therefore, for $\nu$ sufficiently small it is satisfied that
\begin{equation}\label{groundstates7}
\tilde{c}_\nu =   \frac{2-s}{2(N-s)} [\mathcal{S}(\lambda_2,s)]^{\frac{N-s}{2-s}}.
\end{equation}
Let $(\tilde{u},\tilde{v})$ be a minimizer of $\mathcal{J}_\nu$. Arguing by contradiction, we can state either $\tilde{u}\equiv 0$ or $\tilde{v}\equiv 0$. Actually, if $v\equiv 0$, then condition \eqref{groundstates7} is violated. So $u\equiv 0$ and $\tilde{v}$ satisfies the equation
\begin{equation*}
-\Delta \tilde{v} - \lambda_2 \frac{\tilde{v}}{|x|^2}=\dfrac{|\tilde{v}|^{2^*_{s}-2}\tilde{v}}{|x|^{s}} \qquad \mbox{ in } \mathbb{R}^N.
\end{equation*}
To finish, we show that $\tilde{v}= \pm z_{\mu}^{(2)}$. Suppose by contradiction that $\tilde{v}$ changes sign so $\tilde{v}^{\pm} \not \equiv 0$ in $\mathbb{R}^N$. Since $(0,\tilde{v}) \in \mathcal{N}_\nu$, then $(0,\tilde{v}^\pm) \in \mathcal{N}_\nu$ and, by \eqref{groundstates3}, we reach a contradiction, namely,
\begin{equation*}
\tilde{c}_{\nu}= \mathcal{J}_\nu (0,\tilde{v}) = \frac{2-s}{2(N-s)} \int_{\mathbb{R}^N} \frac{|\tilde{v}|^{2^*_{s}}}{|x|^{s}}   = \frac{2-s}{2(N-s)} \int_{\mathbb{R}^N} \left( \frac{|\tilde{v}^+|^{2^*_{s}}}{|x|^{s}}  + \frac{|\tilde{v}^-|^{2^*_s}}{|x|^{s}}\right)  > \mathcal{J}_\nu (0,\tilde{v}^+) \ge  \tilde{c}_{\nu}.
\end{equation*}
Then, $(0,\pm z_{\mu}^{(2)})$ is the minimizer of $\mathcal{J}_\nu$ in $\mathcal{N}_\nu$ if $\lambda_1<\lambda_2$. Consequently, under these hypotheses, $(0,z_{\mu}^{(2)})$ is a ground state to \eqref{system:alphabeta}.  We can deduce $ii)$ and $iii)$ analogously.
\end{proof} 

\begin{proof}[Proof of Theorem~\ref{MPgeom}]
Let us prove the thesis assuming condition $i)$, as the proof  follows analogously under hypothesis $ii)$. First, we shall  prove that the energy functional  $\mathcal{J}_\nu^+\Big|_{\mathcal{N}^+_\nu} $ admits a Mountain--Pass geometry. Secondly, we show that the PS condition holds for the Mountain--Pass level. As a consequence, we deduce the existence of $(\tilde{u},\tilde{v})\in\mathbb{D}$ which is a critical point of $\mathcal{J}_\nu^+$ and, therefore, a bound state of \eqref{system:alphabeta}.

\textbf{Step 1:} Let us define the set of paths that connects $(z_{\mu}^{(1)},0)$ to $(0,z_{\mu}^{(2)})$ continuously,
\begin{equation*}
\Psi_\nu = \left\{ \psi(t)=(\psi_1(t),\psi_2(t))\in C^0([0,1],\mathcal{N}^+_\nu): \, \psi(0)=(z_1^{(1)},0) \mbox{ and } \, \psi(1)=(0,z_1^{(2)})\right\},
\end{equation*}
and the Mountain--Pass level
\begin{equation*}
c_{MP} = \inf_{\psi\in\Psi_\nu} \max_{t\in [0,1]} \mathcal{J}^+_{\nu} (\psi(t)).
\end{equation*}
Take $\psi=(\psi_1,\psi_2) \in \Psi_\nu$, then by the identity \eqref{Nnueqp}, we obtain that
\begin{equation}\label{MPgeomp1}
\begin{split}
\|(\psi_1(t),\psi_2(t))\|^2_{\mathbb{D}}=&\ \int_{\mathbb{R}^N} \frac{(\psi_1^+(t))^{2^*_{s}}}{|x|^{s}}dx +\int_{\mathbb{R}^N} \frac{(\psi_2^+(t))^{2^*_{s}}}{|x|^{s}} dx\\
&  + \nu(\alpha+\beta) \int_{\mathbb{R}^N} h(x) \frac{(\psi_1^+(t))^\alpha (\psi_2^+(t))^\beta}{|x|^s} \, dx  ,
\end{split}
\end{equation}
and, using \eqref{Nnueqp1},
\begin{equation}\label{MPgeomp2}
\begin{split}
\mathcal{J}^+_{\nu} (\psi(t)) =&\ \frac{2-s}{2(N-s)}\int_{\mathbb{R}^N} \frac{(\psi_1^+(t))^{2^*_{s}}}{|x|^{s}} +\int_{\mathbb{R}^N} \frac{(\psi_2^+(t))^{2^*_{s}}}{|x|^{s}} dx \\
  &+ \nu\left(\frac{\alpha+\beta-2}{2}\right) \int_{\mathbb{R}^N}h(x)\frac{(\psi_1^+(t))^\alpha (\psi_2^+(t))^\beta}{|x|^s} \, dx.
\end{split}
\end{equation}
Let us take $\sigma(t)=\left(\sigma_1(t),\sigma_2(t)\right)$, with $\displaystyle\sigma_j(t)\vcentcolon=\int_{\mathbb{R}^N} \dfrac{(\psi_j^+(t))^{2^*_{s}}}{|x|^{s}} \, dx$. Then, by \eqref{H-S_lambda} and \eqref{MPgeomp1}, 
\begin{equation}\label{MPgeomp3}
\begin{split}
\mathcal{S}(\lambda_1,s)(\sigma_1(t))^{\frac{N-2}{N-s}}+\mathcal{S}(\lambda_2,s)(\sigma_2(t))^{\frac{N-2}{N-s}}
\leq&\ \|(\psi_1(t)\|^2_{\lambda_1},\psi_2(t))\|^2_{\mathbb{D}}\\
=&\ \sigma_1(t)+\sigma_2(t)\\
&+\nu (\alpha+\beta) \int_{\mathbb{R}^N} h \frac{(\psi_1^+(t))^\alpha (\psi_2^+(t))^\beta}{|x|^s} \, dx.
\end{split}
\end{equation}
Using H\"older's inequality, one can bound the previous integral as
\begin{equation}\label{MPgeomp4}
\int_{\mathbb{R}^N} h \frac{(\psi_1^+(t))^\alpha (\psi_2^+(t))^\beta}{|x|^s} \, dx \leq \nu \|h\|_{L^{\infty}}  (\sigma_1(t))^{\frac{\alpha}{2}\frac{N-2}{N-s}} (\sigma_2(t))^{\frac{\beta}{2}\frac{N-2}{N-s}}. \end{equation}

Note that, from the definition of $\psi$, we have
\begin{equation*}
\sigma(0)=\left(\int_{\mathbb{R}^N} \frac{(z_1^{(1)})^{2^*_{s}}}{|x|^{s}} \, dx,0\right) \quad \mbox{ and } \quad \sigma(1)=\left(0,\int_{\mathbb{R}^N} \frac{(z_1^{(2)})^{2^*_{s}}}{|x|^{s}} \, dx \right).
\end{equation*}
As $\sigma(t)$ is continuous, there exists $\tilde{t}\in(0,1)$ such that $\sigma_1(\tilde{t})=\tilde{\sigma}=\sigma_2(\tilde{t})$. Taking $t=\tilde{t}$ in inequality \eqref{MPgeomp3} and applying \eqref{MPgeomp4}, we have that
\begin{equation*}
\left( \mathcal{S}(\lambda_1,s) +\mathcal{S}(\lambda_2,s)\right) \tilde \sigma^{\frac{2}{2^*_{s}}} \leq 2 \tilde{\sigma} + \nu (\alpha+\beta) \tilde{\sigma}^{\frac{\alpha+\beta}{2^*_{s}}}.
\end{equation*}
Since $\tilde{\sigma}\neq 0$, by Lemma~\ref{algelemma}, for some $\tilde{\nu}>0$ sufficiently small the previous inequality implies 
\begin{equation}\label{MPgeomp5}
\tilde{\sigma}> \left[\frac{ \mathcal{S}(\lambda_1,s)+ \mathcal{S}(\lambda_2,s)}{2}\right]^{\frac{N-s}{2-s}}>  \left[ \mathcal{S}(\lambda_2,s) \right]^{\frac{N-s}{2-s}}  \qquad \mbox{ for every } 0<\nu\le\tilde{\nu},
\end{equation}
where we have used that $\lambda_2>\lambda_1$. As a result, from \eqref{MPgeomp2} and \eqref{MPgeomp5}, we deduce
\begin{equation*}
\max_{t\in[0,1]} \mathcal{J}^+_\nu(\psi(t)) > 2 \frac{2-s}{2(N-s)} \left[ \mathcal{S}(\lambda_2,s) \right]^{\frac{N-s}{2-s}} = 2 \mathfrak{C}(\lambda_2,s)>\mathfrak{C}(\lambda_1,s).
\end{equation*}
Then, $c_{MP}> \mathfrak{C}(\lambda_1,s)=\max\{\mathcal{J}^+_\nu(z_1^{(1)},0),\mathcal{J}^+_\nu(z_1^{(2)},0)\}$. Thus, $\mathcal{J}^+_\nu$ admits a Mountain--Pass structure on $\mathcal{N}_\nu$. 

\textbf{Step 2:} We consider the path $ \psi(t) =( \psi_1(t),   \psi_2(t))=\left((1-t)^{1/2} z_1^{(1)},t^{1/2}z_1^{(2)} \right)$ for $t\in[0,1]$. By the Nehari manifold properties, there exists a positive function $\gamma:[0,1]\mapsto(0,+\infty)$ such that $\gamma \psi \in \mathcal{N}_\nu^+\cap \mathcal{N}_\nu$ for  $t\in[0,1]$. Note that $\gamma(0)=\gamma(1)=1$. As before, we define
\begin{equation*}
\sigma(t)=(\sigma_1(t),\sigma_2(t))=\left(\int_{\mathbb{R}^N}\frac{\left( \gamma \psi_1(t)\right)^{2^*_{s}}}{|x|^{s}} \, dx , \int_{\mathbb{R}^N} \frac{\left( \gamma \psi_2(t)\right)^{2^*_{s}}}{|x|^{s}} \, dx \right).
\end{equation*}
By \eqref{normcrit}, we have that
\begin{equation}\label{Mpgeom7}
\sigma_1(0)=[\mathcal{S}(\lambda_1,s)]^{\frac{N-s}{2-s}}\quad\text{and}\quad \sigma_2(1)=[\mathcal{S}(\lambda_2,s)]^{\frac{N-s}{2-s}}.
\end{equation}
Since $\gamma \psi(t)\in\mathcal{N}^+_\nu\cap \mathcal{N}_\nu$, using \eqref{normH}, we get
\begin{equation*}
\begin{split}
\left\|\left((1-t)^{1/2} z_1^{(1)},t^{1/2}z_1^{(2)} \right)\right\|^2_\mathbb{D} &= (1-t)\sigma_1(0)+t\sigma_2(1) \\
&= \gamma^{2^*_{s}-2}(t)\left((1-t)^{2^*_{s}/2} \sigma_1(0) + t^{2^*_{s}/2}\sigma_2(1)\right)   \\
&\mkern+20mu + \nu (\alpha+\beta)\gamma^{\alpha+\beta-2} (t)(1-t)^{\alpha/2}t^{\beta/2} \int_{\mathbb{R}^N} h(x) \frac{ (z_1^{(1)})^\alpha (z_1^{(2)})^{\beta}}{|x|^s} dx,
\end{split}
\end{equation*}
implying that, for every  $t\in(0,1)$, it holds
\begin{equation}\label{gammabound}
(1-t)\sigma_1(0)+t\sigma_2(1)>\gamma^{2^*_{s}-2}(t)\left((1-t)^{2^*_{s}/2} \sigma_1(0) + t^{2^*_{s}/2}\sigma_2(1)\right).
\end{equation}
As $\gamma \psi \in \mathcal{N}_\nu^+$, we can express the energy level by using \eqref{Nnueq} and bound it by \eqref{gammabound}, so that
\begin{equation}\label{Jgammapsibound}
\begin{split}
\mathcal{J}_\nu^+ (\gamma \psi(t))=&\ \left( \frac{1}{2}-\frac{1}{\alpha+\beta} \right)\|\gamma\psi(t) \|^2_\mathbb{D}\\
&+ \left(\frac{1}{\alpha+\beta}- \frac{1}{2^*_{s}} \right) \gamma^{2^*_{s}}(t)  \int_{\mathbb{R}^N}\frac{(\psi_1(t))^{2^*_{s}}}{|x|^{s}}dx   +\int_{\mathbb{R}^N}\frac{(\psi_2(t))^{2^*_{s}}}{|x|^{s}}dx\vspace{0.3cm} \\
 =&\ \gamma ^2(t)  \left( \frac{1}{2}-\frac{1}{\alpha+\beta} \right)  \left[(1-t) \sigma_1(0) + t \sigma_2(1) \right] \\
 &+ \gamma^{2^*_{s}} \left(\frac{1}{\alpha+\beta}- \frac{1}{2^*_{s}} \right) \left[  (1-t)^{2^*_{s}/2} \sigma_1(0)   +  t^{2^*_{s}/2} \sigma_2(1)\right] \\
<&\ \frac{2-s}{2(N-s)} \gamma^2(t)\left[(1-t)\sigma_1(0) + t  \sigma_2(1) \right].
\end{split}
\end{equation}
From \eqref{gammabound} and \eqref{Jgammapsibound}, we deduce that
\begin{equation*}
g(t)\!\vcentcolon=\!\frac{2-s}{2(N-s)}\!\! \left[ \dfrac{(1-t) \sigma_1(0) + t \sigma_2(1)}{(1\!-\!t)^{2^*_{s}/2} \sigma_1(0)\!+\! t^{2^*_{s}/2} \sigma_2(1)}  \right]^{\frac{2}{2^*_{s}-2}} \left[(1-t)\sigma_1(0) + t  \sigma_2(1) \right]\!\ge\!\max_{t\in[0,1]} \mathcal{J}_\nu^+ (\gamma \psi(t)).
\end{equation*}
Notice that $g$ attains its maximum value at $t=\frac{1}{2}$. Actually, by \eqref{Mpgeom7}, we have
\begin{equation*}
g\left(\dfrac{1}{2}\right)=\frac{2-s}{2(N-s)} \left(\sigma_1(0) +  \sigma_2(1)\right) = \mathfrak{C}(\lambda_1,s)+\mathfrak{C}(\lambda_2,s).
\end{equation*}
Then, using \eqref{Jgammapsibound} and  \eqref{lamdasalphabeta}, we derive that 
$
\mathcal{J}_\nu^+ (\gamma \psi(t))<  \mathfrak{C}(\lambda_1,s)+\mathfrak{C}(\lambda_2,s) < 3 \mathfrak{C}(\lambda_2,s).
$
Consequently,
$\mathfrak{C}(\lambda_2,s)<\mathfrak{C}(\lambda_1,s)<c_{MP} \leq \max_{t\in[0,1]} \mathcal{J}_\nu^+(\gamma \psi(t))<  3 \mathfrak{C}(\lambda_2,s).
$
Then, the  Mountain--Pass level $c_{MP}$ satisfies the assumptions of Lemmas~\ref{lemmaPS1} and \ref{lemcritic}. By the Mountain--Pass Theorem, we can infer the existence of a sequence $\left\{ (u_n,v_n) \right\} \subset \mathcal{N}^+_\nu$ such that 
\begin{equation*}
\mathcal{J}^+(u_n,v_n ) \to c_\nu \quad\text{and}\quad \mathcal{J}^+|_{\mathcal{N}
^+_\nu}(u_n,v_n ) \to 0.
\end{equation*}
If $\alpha+\beta<2^*_s$, by analogous versions of Lemmas~\ref{lemma:PSNehari} and \ref{lemmaPS1} for $\mathcal{J}^+_\nu$, we get $\left\{ (u_n,v_n) \right\} \to ( \tilde{u},\tilde{v} )$. Indeed, $(\tilde{u},\tilde{v} )$ is a critical point of $\mathcal{J}_\nu$ on $\mathcal{N}_\nu$ so it is also a critical point of $\mathcal{J}_\nu$ defined in $\mathbb{D}$. Moreover, $\tilde{u},\tilde{v} \ge 0$ in $\mathbb{R}^N$ and by the maximum principle in $\mathbb{R}^N\setminus \{0\}$ we conclude they are strictly positive.
For assumptions $ii)$, the PS condition follows by Lemma~\ref{lemmaPS1a}. If $\alpha+\beta=2^*_s$, we follow the same approach using  now Lemma~\ref{lemcritic}.
\end{proof}

\begin{center}{\bf Acknowledgements}\end{center} This work has been partially supported by the Madrid Government (Comunidad de Madrid-Spain) under the Multiannual Agreement with UC3M in the line of Excellence of University Professors (EPUC3M23), and in the context of the V PRICIT (Regional Programme of Research and Technological Innovation).\\ R.L-S. is currently supported by the grant Juan de la Cierva Incorporación fellowship (JC2020-046123-I), funded by MCIN/AEI/10.13039/501100011033, and by the European Union Next Generation EU/PRTR. He is also partially supported by Grant PID2021-122122NB-I00 funded by MCIN/AEI/ 10.13039/501100011033 and by “ERDF A way of making Europe”.
\\
 A.O. is partially supported by the Ministry of Economy and Competitiveness of Spain, under research project PID2019-106122GB-I00.
 {\small

}
\end{document}